\definecolor{blue}{rgb}{0,0,0.9}
\definecolor{red}{rgb}{0.9,0,0}
\definecolor{green}{rgb}{0,0.9,0}
\newcommand{\be}{\begin{equation}}
\newcommand{\ee}{\end{equation}}
\newcommand{\ben}{\begin{enumerate}}
\newcommand{\een}{\end{enumerate}}
\newcommand{\bfr}{\begin{frame}}
\newcommand{\efr}{\end{frame}}
\newcommand{\bit}{\begin{itemize}}
\newcommand{\eit}{\end{itemize}}
\newcommand{\argmin}{\mathop{\rm argmin}}
\newcommand{\R}{\mathbb R}
\newcommand{\E}{\mathbb{E}}
\newcommand{\cB}{\mathcal B}
\newcommand{\cC}{\mathcal C}
\newcommand{\cD}{\mathcal D}
\newcommand{\cF}{\mathcal F}
\newcommand{\cH}{\mathcal H}
\newcommand{\cL}{\mathcal L}
\newcommand{\cS}{\mathcal S}
\newcommand{\cX}{\mathcal X}
\newcommand{\cY}{\mathcal Y}
\def\norm#1{\left\|#1\right\|}
\def\inner#1{\left\langle#1\right\rangle}
\def\inprod#1#2{\langle#1,\,#2\rangle}
\newtheorem{assumption}{Assumption}[section]
\def\dist{{\rm dist}}
\def\dom{{\rm dom}}
\journalname{}
\begin{document}

\title{Stochastic Bregman Subgradient Methods for Nonsmooth Nonconvex Optimization Problems}

%\subtitle{Using  the  LaTex Template}

\author{Kuangyu Ding \and Kim-Chuan Toh}

\institute{Kuangyu Ding \at
             National University of Singapore \\
              10 Lower Kent Ridge Rd, 119076, Singapore\\
              kuangyud@u.nus.edu
           \and
              Kim-Chuan Toh, \at
             National University of Singapore \\
              10 Lower Kent Ridge Rd, 119076, Singapore\\
              mattohkc@nus.edu.sg
}

\date{Received: date / Accepted: date}
%The correct dates will be entered by the editor.

\maketitle

\begin{abstract}
This paper focuses on the problem of minimizing a locally Lipschitz continuous function. Motivated by the effectiveness of Bregman gradient methods in training nonsmooth deep neural networks and the recent progress in stochastic subgradient methods for nonsmooth nonconvex optimization problems \cite{bolte2021conservative,bolte2022subgradient,xiao2023adam}, we investigate the long-term behavior of stochastic Bregman subgradient methods in such context, especially when the objective function lacks Clarke regularity. We begin by exploring a general framework for Bregman-type methods, establishing their convergence by a differential inclusion approach. For practical applications, we develop a stochastic Bregman subgradient method that allows the subproblems to be solved inexactly. Furthermore, we demonstrate how a single timescale momentum can be integrated into the Bregman subgradient method with slight modifications to the momentum update. Additionally, we introduce a Bregman proximal subgradient method for solving composite optimization problems possibly with constraints, whose convergence can be guaranteed based on the general framework. Numerical experiments on training nonsmooth neural networks are conducted to validate the effectiveness of our proposed methods. 
\end{abstract}
\keywords{Nonsmooth nonconvex optimization \and Clarke regularity \and Conservative field \and Bregman subgradient methods \and Deep learning.}
\subclass{90C05 \and 90C06 \and 90C25}

\vspace{1em}
\noindent{Communicated by Sorin-Mihai Grad.}

%All acknowledgements should be placed in the back of the paper after Conclusions..

\section{Introduction}

In this paper, we focus on exploring the application of the Bregman distance based stochastic subgradient methods for solving nonsmooth nonconvex optimization problems. We consider the following unconstrained optimization problem,
\begin{equation}\label{min-prob}
\min_{x\in\R^n} f(x),
\end{equation}
where $f:\R^n\rightarrow\R$ is a locally Lipschitz continuous function (possibly nonconvex and nonsmooth). This class of problems encompasses many important applications, particularly in training nonsmooth neural networks where nonsmooth activate functions, such as the rectified linear unit (ReLU), are employed. 

First order methods are commonly used to solve \eqref{min-prob}. In many scenarios, only noisy gradients or subgradients are available. During the last several decades, a great number of stochastic first order methods have been proposed. Among these, stochastic gradient descent (SGD) could be the most fundamental method. Based on SGD, a great number of its variants have also been developed to obtain some benefits on speed-up, stability, or memory efficiency. For example, the heavy-ball SGD \cite{polyak1964some}, signSGD \cite{bernstein2018signsgd}, and normalized SGD \cite{you2017large,you2019large,cutkosky2020momentum}. These methods have demonstrated great efficiency and competitive generalization performance in various tasks in deep learning. 

Despite the progress in developing SGD-type methods, the conventional convergence analysis largely pertains to scenarios where the objective function $f$ exhibits certain regularity properties, namely, differentiability or weakly convexity. However, the prevalence of nonsmooth activation functions, such as ReLU or Leaky ReLU, in neural network architectures results in loss functions that often lack Clarke regularity (e.g. differentiability, weak convexity).  Consequently, the conventional convergence analysis of SGD-type methods is not applicable in the context of training nonsmooth neural networks. {Towards this issue, Bolte and Pauwels \cite{bolte2021conservative} introduce the concept of conservative fields, which generalizes the Clarke subdifferential and admits a chain rule and sum rule for path-differentiable functions that may not be Clarke regular. Leveraging the concept of conservative fields and path-differentiability, subsequent studies \cite{bolte2021conservative,castera2021inertial,bolte2022subgradient,le2023nonsmooth,xiao2023adam,xiao2023convergence,ding2023adam,xiao2024developing,gurbuzbalaban2022stochastic,ruszczynski2021stochastic} utilize a differential inclusion approach \cite{benaim2005stochastic,borkar2009stochastic} to establish convergence for SGD-type methods in training nonsmooth deep neural networks. In particular, \cite{bolte2021conservative,bolte2022subgradient} study the convergence properties of SGD and proximal SGD for minimizing nonsmooth, path-differentiable functions. Moreover, \cite{castera2021inertial} proposes the inertial Newton algorithm (INNA), which is regarded as a variant of momentum-accelerated SGD methods. Additionally, \cite{le2023nonsmooth} establishes the convergence of SGD with heavy-ball momentum, and \cite{xiao2023convergence} establishes the global stability of SGD with heavy-ball momentum. In addition, \cite{gurbuzbalaban2022stochastic,ruszczynski2021stochastic} design stochastic subgradient methods for solving multi-level nested optimization problems, and \cite{xiao2023adam,ding2023adam} establish the convergence properties of adaptive subgradient methods widely used in deep learning. Moreover, \cite{xiao2024developing} develops subgradient methods to solve nonsmooth nonconvex constrained optimization problems. In summary, the crucial step in establishing convergence is to construct the differential inclusion corresponding to the iterative optimization algorithm and then to find a suitable Lyapunov function associated with that differential inclusion.}

Beyond SGD-type methods, stochastic Bregman gradient methods, also known as mirror descent methods, have gained increasing interest recently. Initially introduced by Nemirovski and Yudin \cite{nemirovskij1983problem} for solving constrained convex problems, the fundamental update scheme of Bregman gradient method is given as follows,
\begin{equation}
    x_{k+1}=\argmin_{x\in\R^n}\left\{\inner{g_k,x-x_k}+\frac{1}{\eta_k}\cD_\phi(x,x_k)\right\},
\end{equation}
where $\cD_\phi$ represents the Bregman distance, a generalized measure of distance provided by a kernel function $\phi$, with its definition to be presented in Section 2. A distinctive feature of Bregman gradient methods is the utilization of a broader selection of kernel functions, extending beyond the classical kernel function of $\frac{1}{2}\norm{\cdot}^2$ used in SGD. Therefore, SGD is a special instance of Bregman gradient methods. The application of Bregman gradient methods has expanded to encompass both convex and nonconvex problems, as well as deterministic and stochastic contexts, as demonstrated by subsequent works \cite{beck2003mirror,bauschke2017descent,lu2018relatively,bolte2018first,zhang2018convergence,dragomir2021optimal,hanzely2021fastest,yang2022bregman,ding2023nonconvex}. Bregman gradient methods have found many applications in optimization problems associated with probability constraints, such as optimal transport problem \cite{benamou2015iterative,chu2023efficient} and reinforcement learning \cite{lan2023policy,zhan2023policy}. Recent works have increasingly highlighted the potential of Bregman gradient methods in neural network training. Notably, \cite{ding2023nonconvex} has shown that Bregman gradient methods can achieve better generalization performance and enhanced robustness in stepsize tuning compared to SGD for certain deep learning tasks. Additionally, many recent works, such as \cite{gunasekar2018characterizing, wu2020continuous, li2021implicit, azizan2021stochastic, sun2023unified}, have demonstrated that Bregman gradient methods exhibit implicit regularization, leading to improved generalization performance with the selection of an appropriate kernel function. Additionally, \cite{amid2020reparameterizing,ghai2022non,li2022implicit} provide an interpretation of the classical gradient descent method on reparameterized models from the perspective of Bregman gradient methods. This perspective has led to some applications in deep learning, including neural network quantization, as illustrated by \cite{ajanthan2021mirror}. 

However, existing convergence analysis for Bregman gradient methods has been limited to objective functions that are either differentiable or weakly convex. Given the increasing interest in applying Bregman gradient methods to train nonsmooth neural networks, and the current limited understanding of the convergence properties of Bregman subgradient methods for solving \eqref{min-prob}, this paper aims to provide a theoretical convergence guarantee for Bregman subgradient methods applied to nonsmooth nonconvex problems, particularly for the training of nonsmooth neural networks. Moreover, we propose practical Bregman subgradient methods specifically for training nonsmooth neural networks while ensuring their convergence.

To summarize, the contributions of this paper are as follows:
\begin{itemize}
    \item \textbf{General Bregman differential inclusion:} We first investigate a general Bregman differential inclusion, whose discrete update scheme is consistent with that of Bregman-type methods and allows for biased evaluations of the abstract set-valued mapping. We establish the convergence for the discrete update scheme associated with this Bregman differential inclusion. Specifically, we demonstrate that any cluster point of the sequence generated by the discrete update scheme lies in the stable set of the Bregman differential inclusion, and the Lyapunov function values converge. A key aspect of our approach is the utilization of linear interpolation of the dual sequence induced by the kernel function.
    \item \textbf{Applications of the general Bregman differential inclusion:} By exploiting the flexibility of choosing the kernel function and the set valued mapping in the general framework, we introduce three types of stochastic Bregman subgradient methods for different scenarios. First, we consider the vanilla stochastic Bregman subgradient method (SBG) for unconstrained optimization problem, and establish its convergence. We further show that under certain regularity conditions of the kernel function, a preconditioned subgradient method can be regarded as an inexact Bregman subgradient method, thus fitting it within our proposed method. Moreover, we propose a single timescale  momentum based stochastic Bregman subgradient method (MSBG), and establish its convergence by the sophisticated choice of the kernel function and set-valued mapping. Lastly, we consider the stochastic Bregman proximal subgradient method (SBPG) for constrained composite optimization problem and establish its convergence.
    \item \textbf{Numerical experiments:} To evaluate the performance of our proposed stochastic Bregman subgradient methods, we employ a block-wise polynomial kernel function in our proposed Bregman subgradient methods. We conduct numerical experiments to compare SGD, the momentum based stochastic Bregman subgradient method (MSBG), and its inexact version (iMSBG), for training nonsmooth neural networks. The results illustrate that our Bregman subgradient methods can achieve generalization performance comparable to that of SGD as well as the enhanced robustness of stepsize tuning.
\end{itemize}
 
The remaining sections of the paper are structured as follows. In Section 2, we provide preliminary materials on notations, nonsmooth analysis, and Bregman proximal mapping. Section 3 is dedicated to establishing the convergence of the general Bregman-type method based on the associated differential inclusion. The specific applications of the general Bregman-type method are presented in Section 4, where we first consider the vanilla stochastic Bregman subgradient method and then propose a single timescale momentum based Bregman subgradient method. The latter part of Section 4 explores the stochastic Bregman proximal subgradient method. Section 5 conducts numerical experiments to demonstrate the performance of our proposed Bregman subgradient methods. In the last section, we give a conclusion on this paper.

\section{Preliminary}\label{preliminary-section}
\subsection{Notations}\label{sec:notation}
Given a proper and lower semicontinuous function $f:\mathbb{R}^n\rightarrow\overline{\mathbb{R}} := (-\infty,\infty]$, we denote its domain as $\dom\,f=\{x:f(x)<\infty\}$. The Fenchel conjugate function of $f$ is defined as $f^*(y):=\sup\{\langle x,y\rangle - f(x):x\in\mathbb{R}^n\}$. For a set $\mathcal{S}\subset\mathbb{R}^n$, we use ${\rm cl\,\mathcal{S}}$ to denote its closure, ${\rm int}\,\mathcal{S}$ to denote the set of its interior points, and ${\rm conv}(\cS)$ to denote its convex hull. A function $f:\cS\rightarrow\R$ is said to be of class ${\cC}^k(\mathcal{S})$ if it is $k$ times differentiable and the $k$-th derivative is continuous on $\mathcal{S}$. When there is no ambiguity regarding the domain, we simply use the notation ${\cC}^k$. We let $\mathcal{C}(A,B)$ be the set of continuous mappings from set $A$ to set $B$. We use $\|\cdot\|$ to denote the Euclidean norm for vectors and the Frobenius matrix norm for matrices. The $d$-dimensional unit ball is denoted by $\mathbb{B}^d$. The distance between a point $w$ and a set $A$ is denoted by $\dist(w,A):=\inf\{\|w-u\|:\;u\in A\}$. We use the convention $\dist(w,\emptyset):=\infty$. {The Minkowski sum of two sets $A$ and $B$ is denoted as 
$A+B:=\{u+v:\;u\in A,\;v\in B\}$. We use $\alpha A:=\{\alpha u:\;u\in A\}$ to denote the scaled set of $A$ by a given scalar $\alpha$.} For a positive sequence $\{\eta_k\}$, we define $\lambda_\eta(0):=0$, $\lambda_\eta(k):=\sum_{i=0}^{k-1}\eta_i$ for $k\geq1$, and $\Lambda_\eta(t):=\sup\{k:\lambda_\eta(k)\leq t\}$. In other word, $\Lambda_\eta(t)=k$ if and only if $\lambda_\eta(k)\leq t<\lambda_\eta(k+1)$. {Given a convex function $g$, we say $x$ is a $\nu$-optimal solution of the problem $\min_{x\in\R^n}\;g(x)$ if ${\rm dist}(0,\partial g(x))\leq\nu$.}

Let $(\Omega,\mathcal{F},\mathbb{P})$ be a probability space. Consider a stochastic process $\{\xi_k\}_{k\geq0}$ and a filtration $\{\mathcal{F}_k\}_{k\geq0}$, where $\mathcal{F}_k$ is defined by the $\sigma$-algebra $\mathcal{F}_k:=\sigma(\xi_0,\ldots,\xi_k)$ on $\Omega$, the conditional expectation is denoted as $\mathbb{E}[\cdot|\mathcal{F}_k]$.

    \subsection{Nonsmooth analysis}
In this subsection, we present some concepts from nonsmooth analysis, mainly based on \cite{clarke1990optimization,RockWets98}.
\begin{definition}\label{graph-def}
Let $\mathcal{X}\subset\mathbb{R}^n$. A set-valued mapping $S:\mathcal{X}\rightrightarrows\mathbb{R}^m$ is said to be closed if its graph, defined by
\[
{\rm graph}(S):=\{(x,y)\in\R^n\times\R^m:y\in S(x)\}
\]
is a closed set in $\R^n\times\R^m$. Give a nonnegative $\delta$, the $\delta$-perturbed set of $S$ is defined by 
\[
S^\delta(x):=\cup_{\{y\in\R^n:\norm{y-x}\leq\delta\}}\left(S(y)+\delta\mathbb{B}^m\right).
\]
$S:\mathcal{X}\rightrightarrows\mathbb{R}^m$ is called outer semicontinuous at $\bar{x}\in\mathcal{X}$ if for any sequence $x_i\overset{\mathcal{X}}{\rightarrow}\bar x$ and $v_i\in S(x_i)$ converging to some $\bar{v}\in\mathbb{R}^m$, we have $\bar{v}\in S(\bar{x})$. $S$ is said to be outer semicontinuous if it is outer semicontinuous everywhere over $\mathcal{X}$.
\end{definition}

% \begin{proposition}\label{charac-semicont}
%  (\cite[Theorem 5.7, Proposition 5.12]{RockWets98}) Let $S:\R^n\rightrightarrows\R^m$ be a set-valued mapping. If ${\rm graph}(S)$ is closed, then $S$ is outer semicontinuous. In addition, if $S(\bar x)$ is closed, then for any $\rho>0$ and $\epsilon>0$, there is a neighborhood $V$ of $\bar x$, such that 
%  \[
%  S(x)\cap\rho\mathbb{B}^m\subset S(\bar x)+\epsilon\mathbb{B}^m,\text{ for all $x\in V$.}
%  \]
% \end{proposition}

\begin{definition}
Let $\cX\subset\R^n$ be a closed set, the regular normal cone at $\bar x\in\cX$ is defined as $\hat N_{\cX}(\bar x):=\{v\in\R^n:\inner{v,x-\bar x}\leq o(\norm{x-\bar x}),\text{ for $x\in\cX$}\}$. The limiting normal cone is defined by $N^L_{\cX}(\bar x):=\{v\in\R^n:x_k\overset{\cX}{\rightarrow}\bar x,v_k\in\hat N_{\cX}(x_k),v_k\rightarrow v\}$. The normal cone is defined by $N_{\cX}(\bar x):={\rm cl\,conv}(N^L_{\cX}(\bar x))$.
\end{definition}
According to \cite[Proposition 6.6]{RockWets98}, the limiting normal cone $N^L_\cX$ is outer semicontinuous, while this is not necessarily true for $N_\cX$.

\begin{definition}\label{Clarke-def}
Let $f:\R^n\rightarrow\R$ be a locally Lipschitz continuous function. The generalized directional derivative of $f$ at $x\in\R^n$ along the direction $d\in\R^n$ is defined by
\[
f^\circ(x;d):=\lim\sup_{y\rightarrow x,t\downarrow0}\frac{f(y+td)-f(y)}{t}.
\]
The Clarke subdifferential of $f$ at $x$ is defined by
\[
\partial f(x):=\left\{v\in\R^n:\inprod{v}{d}\leq f^\circ(x;d),\;for\;all\;d\in\R^n\right\}.
\]
$f$ is Clarke regular, if for any $d\in\R^n$, its one-side directional derivative, defined by
\[
f^*(x;d):=\lim_{t\downarrow0}\frac{f(x+td)-f(x)}{t},
\]
exists and $f^\circ(x;d)=f^*(x;d)$.
\end{definition}

Clarke regularity excludes functions whose graph has upwards corners, such as $-|x|$. Notably, some basic calculus rules, including the sum and chain rules, may fail for the Clarke subdifferential without Clarke regularity. This limitation motivates the introduction of conservative fields.

\subsubsection{Conservative field, path differentiability}
In this part, we briefly introduce some relevant materials on conservative field, which are mainly based on \cite{bolte2021conservative}.
\begin{definition} (Conservative field and path-differentiability)\label{conservative-field-def}
Let $f:\R^n\to\R$ be a locally Lipschitz function. 
A set-valued mapping $D_f:\R^n\rightrightarrows\R^n$ is a conservative field for $f$ if it is nonempty closed valued, and has closed graph. For any absolutely continuous curve $\gamma:[0,1]\rightarrow\R^n$, $f$ admits a chain rule with respect to $D_f$, i.e.
\be
\frac{d}{dt}(f\circ\gamma)(t)=\inprod{v}{\dot\gamma(t)},\text{ for all }v\in D_f(\gamma(t)) \text{ and almost all }t\in[0,1].
\label{conservative-def}
\ee
If a locally Lipschitz function $f$ admits a conservative field $D_f$, 
{then} we say that $f$ is path-differentiable, and $f$ is the potential function of $D_f$.
\end{definition}

{
More generally, we can define conservative mappings for vector-valued functions, which serve as a generalization of the Jacobian.
\begin{definition}\label{conservative-mapping-def}
Let $F:\R^n\rightarrow\R^m$ be a locally Lipschitz continuous function. $J_F:\R^n\rightrightarrows\R^{m\times n}$ is called a conservative mapping for $F$, if for any absolutely continuous curve $\gamma:[0,1]\rightarrow\R^n$, the following chain rule holds {for} almost all $t\in[0,1]$:
\[
\frac{d}{dt}F(\gamma(t))=V\dot\gamma(t),\;for\;any\;V\in J_F(\gamma(t)).
\]
\end{definition}
}

\begin{definition}
\label{def:chain_rule_set}
 Given a closed set $\cX\subset\R^n$, we say that $\cX$ admits a chain rule, if for any absolutely continuous curve $\gamma:[0,1]\rightarrow\cX$, it holds that 
 \[
 \inner{N_\cX(\gamma(t)),\dot\gamma(t)}=\{0\}, \text{ for almost all $t\in[0,1]$}.
 \]
\end{definition}
As demonstrated in \cite{davis2020stochastic}, when $\cX$ is Whitney stratifiable (e.g. $\cX$ is definable), then $\cX$ admits the chain rule. {We also use $\frac{d}{dt}(f\circ\gamma)(t)=\inprod{D_f(\gamma(t))}{\dot\gamma(t)} \text{ for almost all }t\in[0,1]$ to represent that the chain rule \eqref{conservative-def} is valid for almost all $t\in[0,1]$.} As stated in \cite[Remark 3]{bolte2021conservative}, if $D_f$ is a conservative field for $f$, then ${\rm conv}(D_f)$ is also a conservative field for $f$, and $D_f$ is locally bounded. Consequently, it follows from \cite[Theorem 5.7, Proposition 5.12]{RockWets98} that for any $x\in\R^n$ and $\varepsilon>0$, there exist a neighborhood $V$ of $x$, such that $\cup_{y\in V}D_f(y)\subset D_f(x)+\epsilon\mathbb{B}^n$. 
%This property is a key ingredient for establishing the convergence of subgradient methods based on the concept of conservative field. 
It is important to note that the conservative field for a function \(f\) is not unique. Lemma \ref{le:CF_Clark} clarifies the relationship between a conservative field and the Clarke subdifferential, showing that the Clarke subdifferential is the smallest conservative field among all convex-valued conservative fields. {We define \(x\) to be a \(D_f\)-stationary point if \(0 \in D_f(x)\) to distinguish it from the conventional stationary point defined via the Clarke subdifferential. According to Lemma \ref{le:CF_Clark}, a \(D_f\)-stationary point represents a weaker form of stationarity compared to a Clarke-stationary point. This weaker notion is inevitable if we want to capture the "subgradients" produced by automatic differentiation (AD) in deep learning, for which the Clarke subdifferential may not capture in the absence of Clarke regularity. The same stationarity measure is also used in \cite{bolte2022subgradient,le2023nonsmooth,xiao2023adam}. 

Additionally, \cite{bolte2021conservative} highlights the importance of the conservative field generated by the AD algorithm, widely utilized in modern deep learning frameworks like TensorFlow, PyTorch, and JAX. In the context when the conservative field corresponds precisely to the AD-generated subdifferential, we refer to such a $D_f$-stationary point as a stationary point in the sense of AD, which is the typical case in deep learning. To illustrate the conservative field generated by the AD algorithm, consider the loss function of a two-layer ReLU neural network:
\[
f(W):=\frac{1}{2}\norm{W_2\sigma(W_1x)}^2,
\]
where $\sigma(a):=\max\{a,0\}$ denotes the (coordinate-wise) ReLU activation function,  $x$ is fixed, and $W=(W_1,W_2)$. In Pytorch, the subgradient computed by AD corresponds to $\sigma'(a)=\left\{\begin{array}{cc}
    1, & a>0, \\
    0, & a\leq0.
\end{array}\right.$ Consequently, if $D_f$ represents the AD-generated conservative field, we have
\[
D_f(W_1,W_2)=\left(\left(\sigma'(W_1x)\circ (W_2^TW_2\sigma(W_1x))\right)x^T,\;W_2\sigma(W_1x)\left(\sigma(W_1x)\right)^T\right).
\]
\begin{lemma}(\cite[Theorem 1, Corollary 1]{bolte2021conservative})
\label{le:CF_Clark}
    Consider a locally Lipschitz continuous function $f:\R^n\rightarrow\R$, with $\partial f$ as its Clarke subdifferential and $D_f$ as its convex-valued conservative field. Then, 
    \begin{enumerate}
        \item $D_f(x)=\{\nabla f(x)\}$ for almost all $x\in\R^n$.
        \item $\partial f$ is a conservative field of $f$. Moreover, for any $x\in\R^n$, it holds that $\partial f(x)\subset D_f(x)$.
    \end{enumerate} 
 \end{lemma}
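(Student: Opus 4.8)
The plan is to prove statement (1) first by a measure-theoretic argument that combines Rademacher's theorem with the chain rule of Definition \ref{conservative-field-def} applied along straight segments, and then to obtain statement (2) as a consequence, using the closed-graph and convex-valuedness of $D_f$ together with the gradient representation of the Clarke subdifferential.

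For statement (1), I would first record two structural facts: $f$ is differentiable almost everywhere by Rademacher's theorem, and $D_f$ is locally bounded by \cite[Remark 3]{bolte2021conservative}. Fixing a coordinate direction $e_i$, the segments $t\mapsto x+te_i$ are absolutely continuous, so the chain rule gives, for almost every $t$ and \emph{every} $v\in D_f(x+te_i)$, the identity $\frac{d}{dt}f(x+te_i)=\inprod{v}{e_i}$, where the exceptional null set of parameters $t$ is independent of $v$. Since $f$ is locally Lipschitz, each slice $t\mapsto f(x+te_i)$ is absolutely continuous, so Fubini's theorem shows that for almost every $x$ its derivative equals $\inprod{v}{e_i}$ for every $v\in D_f(x+te_i)$. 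This identifies $\inprod{v}{e_i}$ with the weak partial derivative $\partial_{x_i}f$, which for Lipschitz functions agrees a.e. with the classical $\inprod{\nabla f}{e_i}$; hence $\inprod{v}{e_i}=\inprod{\nabla f(x)}{e_i}$ for all $v\in D_f(x)$ and almost every $x$. Intersecting the resulting full-measure sets over $i=1,\dots,n$ yields $D_f(x)=\{\nabla f(x)\}$ for almost all $x$.

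For statement (2), I would invoke the representation $\partial f(x)=\conv\{\lim_k\nabla f(x_k):x_k\to x,\ x_k\in\Omega\}$, valid for any full-measure set $\Omega$ of differentiability points \cite{clarke1990optimization}. Choosing $\Omega$ to be the full-measure set produced in part (1), on which $\nabla f(x_k)\in D_f(x_k)$, the closed-graph property of $D_f$ sends each limit $\lim_k\nabla f(x_k)$ into $D_f(x)$, and convex-valuedness of $D_f$ then yields $\partial f(x)\subset D_f(x)$ for every $x$. Finally, since $\partial f$ is nonempty-, compact-, convex-valued with closed graph and is pointwise contained in $D_f$, the chain rule \eqref{conservative-def} satisfied by $D_f$ restricts verbatim to every $v\in\partial f(\gamma(t))$, so $\partial f$ is itself a conservative field for $f$.

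The main obstacle I expect is the measure-theoretic heart of part (1): passing from the ``for every $v$'' chain rule that holds along almost every line to an almost-everywhere statement about the whole set $D_f(x)$ in $\R^n$. This requires care with the Fubini slicing (fixing a direction, reducing to one dimension, then reassembling), with the fact that the null set of bad parameters is independent of $v$, and with identifying the slice derivatives as the weak, and hence classical, gradient of $f$. Once the singleton property is secured on a full-measure set, statement (2) reduces to routine bookkeeping with closed graphs and convex hulls.
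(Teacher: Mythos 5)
The paper offers no proof of this lemma---it is imported verbatim from Bolte--Pauwels \cite[Theorem 1, Corollary 1]{bolte2021conservative}---and your argument correctly reconstructs essentially the proofs given in that cited source: Rademacher's theorem plus Fubini slicing along coordinate lines (using that the exceptional null set of parameters in \eqref{conservative-def} is indeed independent of $v$) to get $D_f(x)=\{\nabla f(x)\}$ almost everywhere, and then Clarke's gradient-limit representation $\partial f(x)={\rm conv}\{\lim_k\nabla f(x_k):x_k\to x,\ x_k\in\Omega\}$ (valid for any full-measure set $\Omega$ of differentiability points) combined with the closed graph, local boundedness, and convex-valuedness of $D_f$ to obtain $\partial f(x)\subset D_f(x)$, the conservativity of $\partial f$ then following by restricting the chain rule for $D_f$ to the smaller set $\partial f(\gamma(t))$. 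The only point you gloss is the Lebesgue measurability of the ``bad'' set needed for the Fubini step, which is routine here since $D_f$ is locally bounded with closed graph, so the support functions $x\mapsto\max_{v\in D_f(x)}\inprod{v}{e_i}$ and $x\mapsto\min_{v\in D_f(x)}\inprod{v}{e_i}$ are measurable; this technicality is treated equally lightly in the cited reference and does not constitute a genuine gap.
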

}

The next lemma provides the motivation for introducing the concept of conservative field, as it highlights that conservative fields preserve some basic calculus rules that do not necessarily hold for Clarke subdifferential without Clarke regularity. 

\begin{lemma}(\cite[Lemma 5, Corollary 4]{bolte2021conservative})\label{le:chain-rule-conservative}
\begin{enumerate}
    \item Let $F_1:\R^n\rightarrow\R^m$, $F_2:\R^m\rightarrow\R^l$ be locally Lipschitz continuous vector-valued functions, and $J_1:\R^n\rightrightarrows\R^{m\times n}$ and $J_2:\R^m\rightrightarrows\R^{l\times m}$ be the conservative mappings of $F_1$ and $F_2$ respectively. Then the mapping $J_2\circ J_1:\R^n\rightrightarrows\R^{l\times n}$ is a conservative mapping for $F_2\circ F_1$. 
    
    \item Let $D_{f_i}$ be a conservative field for $f_i$, $i=1,...,N$. Then, $D_f=\sum_{i=1}^ND_{f_i}$ is a conservative field of $f=\sum_{i=1}^Nf_i$.

\end{enumerate}
\end{lemma}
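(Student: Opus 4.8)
The plan is to handle the two parts separately, since each reduces to transporting the defining chain-rule identity along a suitably chosen absolutely continuous curve and the two differ only in bookkeeping. I would do the composition rule (part 1) first.

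For part 1, recall that Definition \ref{conservative-mapping-def} only imposes the chain rule, so it suffices to verify that identity. Fix an arbitrary absolutely continuous curve $\gamma:[0,1]\to\R^n$ and set $\sigma:=F_1\circ\gamma$. The first observation is that $\sigma$ is again absolutely continuous: $F_1$ is locally Lipschitz and $\gamma([0,1])$ is compact, so $F_1$ is Lipschitz on a neighborhood of the range of $\gamma$, and a Lipschitz map composed with an absolutely continuous curve is absolutely continuous; hence $\dot\sigma(t)$ exists for almost all $t$. I would then apply the chain rule for $J_1$ along $\gamma$ to get $\dot\sigma(t)=\frac{d}{dt}F_1(\gamma(t))=V_1\dot\gamma(t)$ for every $V_1\in J_1(\gamma(t))$ and almost all $t$, and the chain rule for $J_2$ along the absolutely continuous curve $\sigma$ to get $\frac{d}{dt}F_2(\sigma(t))=V_2\dot\sigma(t)$ for every $V_2\in J_2(\sigma(t))=J_2(F_1(\gamma(t)))$ and almost all $t$. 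On the intersection of these two full-measure sets (and the points where $\dot\gamma,\dot\sigma$ exist), substituting the first identity into the second yields $\frac{d}{dt}(F_2\circ F_1)(\gamma(t))=(V_2 V_1)\dot\gamma(t)$ simultaneously for all $V_1\in J_1(\gamma(t))$ and $V_2\in J_2(F_1(\gamma(t)))$, i.e. for every matrix in $(J_2\circ J_1)(\gamma(t))=\{V_2V_1:V_1\in J_1(\gamma(t)),\,V_2\in J_2(F_1(\gamma(t)))\}$. Since $\gamma$ was arbitrary, this is precisely the chain rule required of a conservative mapping for $F_2\circ F_1$.

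For part 2, I would first verify the structural requirements of Definition \ref{conservative-field-def} for $D_f=\sum_{i=1}^N D_{f_i}$. Nonemptiness is immediate. Because any conservative field is locally bounded (as recorded earlier in the section), $D_{f_i}(x)$ is compact for each fixed $x$, so the finite Minkowski sum $D_f(x)$ is compact, hence closed-valued; the same local boundedness gives the closed graph by a subsequence argument: if $(x_k,v_k)\to(x,v)$ with $v_k=\sum_i v_{i,k}$ and $v_{i,k}\in D_{f_i}(x_k)$, then each $\{v_{i,k}\}_k$ stays bounded near $x$, a subsequence converges to some $v_i$, closedness of ${\rm graph}(D_{f_i})$ gives $v_i\in D_{f_i}(x)$, and $v=\sum_i v_i\in D_f(x)$. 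The chain rule then follows by linearity of differentiation: for an absolutely continuous $\gamma$, each $f_i$ satisfies $\frac{d}{dt}(f_i\circ\gamma)(t)=\inprod{v_i}{\dot\gamma(t)}$ for all $v_i\in D_{f_i}(\gamma(t))$ off a null set $N_i$, and on $[0,1]\setminus\bigcup_{i=1}^N N_i$ (still of null complement, as the union is finite) I sum to get $\frac{d}{dt}(f\circ\gamma)(t)=\inprod{\sum_i v_i}{\dot\gamma(t)}$, with every element of $D_f(\gamma(t))$ being of the form $\sum_i v_i$, $v_i\in D_{f_i}(\gamma(t))$.

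The calculations are routine; the two points demanding care, which I expect to be the only (mild) obstacles, are as follows. The first is the absolute continuity of $\sigma=F_1\circ\gamma$ in part 1, which is what legitimizes feeding $\sigma$ into the $J_2$ chain rule in the first place. The second is the closed graph of the Minkowski sum in part 2, where closedness of sums of sets can fail in general and is rescued precisely by the local boundedness of conservative fields; I would keep the argument finite in $N$ and finite-dimensional so that the compactness and subsequence extraction remain valid.
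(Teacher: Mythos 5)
Your proof is correct; note, however, that the paper itself offers no argument for this lemma --- it is imported verbatim, with citation, from \cite{bolte2021conservative} (Lemma 5, Corollary 4), so there is no internal proof to compare against, and your write-up is in effect supplying the verification the paper delegates to the literature. Relative to the definitions as rendered in Section 2 of the paper, your argument is complete, and the two points you single out are indeed the only delicate ones: the absolute continuity of $\sigma = F_1\circ\gamma$ (local Lipschitzness of $F_1$ on a compact neighborhood of $\gamma([0,1])$ both legitimizes invoking the $J_2$ chain rule along $\sigma$ and guarantees $\dot\sigma$ exists a.e., so the substitution $V_2\dot\sigma(t)=V_2V_1\dot\gamma(t)$ on the intersection of the two full-measure sets is valid), and the closed graph of the Minkowski sum, which can fail for general closed summands but is rescued here exactly as you say, since local boundedness of conservative fields makes each $D_{f_i}(x)$ compact and powers the simultaneous subsequence extraction over the finitely many indices. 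You also correctly exploit an asymmetry in the paper's definitions: Definition \ref{conservative-mapping-def} imposes only the chain rule on conservative mappings, so part 1 requires nothing beyond the substitution argument, whereas Definition \ref{conservative-field-def} does demand nonempty closed values and a closed graph, which is why your structural checks in part 2 are necessary and not optional; under the original Bolte--Pauwels formulation one would additionally verify graph-closedness and local boundedness of $J_2\circ J_1$ in part 1, but that is outside what this paper's definition asks. One small observation: part 2 could alternatively be deduced from part 1 by composing the stacked map $x\mapsto(f_1(x),\dots,f_N(x))$, with conservative mapping given by stacking the $D_{f_i}$, with the linear summation functional --- though proving that stacking is conservative uses the same finite-union-of-null-sets bookkeeping you carry out directly, so your elementary route loses nothing.
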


The Clarke subdifferential may fail to satisfy key calculus rules, whereas the conservative field retains these properties, as established in Lemma \ref{le:chain-rule-conservative}. This ensures the well-definedness of gradient methods based on automatic differentiation (AD) in deep learning，since "subdifferential" generated by AD algorithms is a special case of conservative field. A notable subclass of path-differentiable functions is the set of definable functions in o-minimal structures \cite{coste2000introduction,van1996geometric}, which includes widely used activation and loss functions, such as sigmoid, ReLU, softmax, $l_1$ loss, cross-entropy, and hinge loss. Definable functions are closed under composition, summation, and set-valued integration, ensuring that neural networks built from such blocks remain definable \cite{davis2020stochastic,bolte2021conservative}. Their geometric structure, characterized by Whitney stratification, enables effective analysis of nonsmooth optimization algorithms \cite{bolte2007clarke,davis2020stochastic,bolte2021conservative}.

\subsection{Bregman proximal mapping}
In this subsection, we review some concepts related to Bregman proximal 
mappings. Given that this paper mainly focuses on unconstrained problems, we restrict our discussion on the entire space $\R^n$. For more general concepts about Bregman gradient mapping, readers can refer to works such as \cite{bauschke2017descent,bolte2018first,ding2023nonconvex}.

\begin{definition}\label{Bregman-distance-def}
(Kernel function and Bregman distance over $\R^n$).  A function $\phi:\R^n\rightarrow\R$ is called a kernel function over $\R^n$, if $\phi$ is convex and $\phi\in\cC^1(\R^n)$. The Bregman distance \cite{bregman1967relaxation} generated by $\phi$ is denoted as $\cD_\phi(x,y):\R^n\times\R^n\rightarrow[0,+\infty)$, where
\[
\cD_\phi(x,y)=\phi(x)-\phi(y)-\inprod{\nabla\phi(y)}{x-y}.
\]
\end{definition}

\begin{definition}\label{Legendre}
(Legendre kernel over $\R^n$). Let $\phi$ be a kernel function over $\R^n$, such that $\lim_{k\rightarrow\infty}\|\nabla\phi({x_k})\|=\infty$ whenever $\{{x_k}\}_{k\in\mathbb N}$ satisfies $\lim_{k\rightarrow\infty}\norm{x_k}=\infty$. The function $\phi$ is called a Legendre function over $\R^n$ if it is also {strictly} convex on $\R^n$.
\end{definition}

\begin{definition}\label{Bregman-prox-maping}
Given a locally Lipschitz continuous function $R$ and a Legendre kernel function $\phi\in\cC^1(\R^n)$,  we denote the Bregman proximal mapping by ${\rm Prox}^\phi_R:=(\nabla\phi+\partial R)^{-1}\nabla\phi$, which is a set-valued mapping defined as follows,
\begin{equation}
{\rm Prox}^\phi_R(x):=\argmin_{u\in\R^n}\;\{R(u)+\cD_\phi(u,x)\}.
\label{Breg-prox-def}
\end{equation}  
\end{definition}
Under mild conditions, ${\rm Prox}^\phi_R(x)$ is a nonempty compact set for any $x\in\R^n$, {which will ensure} the well-posedness of our methods. We have the following lemma, which directly follows from Weierstrass's theorem, we omit its proof for simplicity. 

\begin{lemma}\label{well-def-prox-mapping}
 Let $R$ be a continuous function and $\phi$ be a Legendre function over $\R^n$. Suppose $\alpha R+\phi$ is supercoercive, i.e. $\lim_{\|x\|\rightarrow\infty}\frac{\alpha R(x)+\phi(x)}{\|x\|}=\infty$, for any $\alpha>0$. Then, for any $x\in\R^n$, the set ${\rm Prox}^\phi_R(x)$ is a nonempty compact subset of $\R^n$.
\end{lemma}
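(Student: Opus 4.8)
The plan is to apply Weierstrass's extreme value theorem to the objective in \eqref{Breg-prox-def}, after establishing that the function being minimized is lower semicontinuous with compact sublevel sets. Fix an arbitrary $x\in\R^n$ and define $h(u):=R(u)+\cD_\phi(u,x)$. Expanding the Bregman distance, we have $h(u)=R(u)+\phi(u)-\phi(x)-\inprod{\nabla\phi(x)}{u-x}$. Since $\phi(x)$ and the term $\inprod{\nabla\phi(x)}{x}$ are constants (with respect to $u$), minimizing $h$ over $u$ is equivalent to minimizing $u\mapsto R(u)+\phi(u)-\inprod{\nabla\phi(x)}{u}$. The first step is therefore to record that $h$ is continuous (hence lower semicontinuous), since $R$ is assumed continuous, $\phi\in\cC^1(\R^n)$ is continuous, and the linear term in $u$ is continuous.

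The second and central step is to show that $h$ is coercive, i.e. $h(u)\to\infty$ as $\|u\|\to\infty$, so that its sublevel sets are bounded and, being closed by continuity, compact. Here the supercoercivity hypothesis on $\alpha R+\phi$ is exactly what is needed to absorb the linear perturbation $-\inprod{\nabla\phi(x)}{u}$. I would write, using any fixed $\alpha>0$ (say $\alpha=1$, since $R$ itself appears with coefficient $1$ in $h$),
\begin{equation}
\frac{h(u)}{\|u\|}=\frac{R(u)+\phi(u)}{\|u\|}-\inprod{\nabla\phi(x)}{\frac{u}{\|u\|}}+\frac{\phi(x)+\inprod{\nabla\phi(x)}{x}}{\|u\|}.
\end{equation}
As $\|u\|\to\infty$, the first term tends to $+\infty$ by supercoercivity (with $\alpha=1$), the middle term is bounded in absolute value by $\|\nabla\phi(x)\|$ by Cauchy--Schwarz, and the last term tends to $0$. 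Hence $h(u)/\|u\|\to\infty$, which forces $h(u)\to\infty$; in particular every sublevel set $\{u:h(u)\le c\}$ is bounded.

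The third step assembles the conclusion. Since $h$ is continuous and coercive on $\R^n$, it attains its infimum on the nonempty compact sublevel set $\{u:h(u)\le h(0)\}$, so the minimizer set ${\rm Prox}^\phi_R(x)=\argmin_u h(u)$ is nonempty. This argmin set equals $\{u:h(u)=\min h\}$, which is a sublevel set of the continuous function $h$ and hence closed, and it is contained in a bounded sublevel set and hence bounded; therefore it is compact. This holds for the arbitrary $x$ fixed at the outset, completing the proof.

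I do not expect any genuine obstacle here, since the statement is a direct consequence of Weierstrass's theorem as the paper itself notes. The only point requiring care is making the coercivity argument tolerate the linear term arising from the Bregman distance: one must verify that supercoercivity of $\alpha R+\phi$ (a divergence of the \emph{ratio} to $\|u\|$) dominates a linear term, which is why dividing by $\|u\|$ rather than comparing values directly is the right normalization. A minor subtlety is that the hypothesis is phrased for every $\alpha>0$ but only $\alpha=1$ is used for this particular objective; I would simply invoke the $\alpha=1$ case. The flexibility for general $\alpha$ presumably matters when the proximal mapping is applied with a stepsize scaling, i.e. for objectives of the form $\alpha R(u)+\cD_\phi(u,x)$, and the same argument applies verbatim in that case.
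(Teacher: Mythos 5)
Your proof is correct and is essentially the argument the paper intends: the paper omits the proof entirely, stating only that the lemma ``directly follows from Weierstrass's theorem,'' and your write-up supplies exactly that --- continuity of $u\mapsto R(u)+\cD_\phi(u,x)$ plus coercivity, where supercoercivity of $R+\phi$ (the $\alpha=1$ case) absorbs the linear term $-\inprod{\nabla\phi(x)}{u}$ after dividing by $\|u\|$. The only blemish is a harmless sign typo in your displayed constant term (it should be $\langle\nabla\phi(x),x\rangle-\phi(x)$ rather than $\phi(x)+\langle\nabla\phi(x),x\rangle$), which vanishes as $\|u\|\to\infty$ either way and does not affect the argument.
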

In the context of Bregman proximal gradient method, we focus on the Bregman forward-backward splitting operator ${\rm T}^{\phi}_{\alpha,R}:\R^n\times\R^n\rightrightarrows\R^n$ defined by:
\[
{\rm T}^{\phi}_{\alpha,R}(x,v):=\argmin_{z\in\R^n}\;\left\{\inprod{v}{z-x}+R(z)+\frac{1}{\alpha}\cD_\phi(z,x)\right\}.
\]
For any given $v\in\R^n$ and $\alpha>0$, under the same assumption in Lemma \ref{well-def-prox-mapping}, ${\rm T}^{\phi}_{\alpha,R}(x,v)$ is also well defined. 

We end this subsection by introducing the concept of functional convergence over any compact set, as discussed in literature such as \cite{benaim2005stochastic,duchi2018stochastic,davis2020stochastic}. Given a sequence of mappings $f_n\in\cC(\mathbb{R}_+,\mathbb{R}^n)$, we say that $f_n$ converges to $f$ in $\cC(\R_+,\R^n)$ if, for any $T>0$, it holds that
\[
\lim_{n\rightarrow\infty}\sup_{t\in[0,T]}\|f_n(t)-f(t)\|=0.
\]
% This notion of convergence is equivalent to the convergence in the metric $$d(f,g)=\sum_{t=1}^\infty2^{-t}\sup_{s\in[0,t]}\min\{\|f(s)-g(s)\|,1\},$$ which makes the space of continuous functions $\cC(\R_+,\R^n)$ a complete metric space.

\section{Bregman differential inclusion}
In this section, we investigate the Bregman-type differential inclusion and its discrete approximation, which coincides with the iterative Bregman subgradient methods. The analysis tools employed in this section draw inspiration from various works on stochastic approximation, such as \cite{benaim2005stochastic, borkar2009stochastic, duchi2018stochastic, davis2020stochastic,ruszczynski2020convergence}. To establish the convergence of the discrete sequence $\{x_k\}$, a key idea is to show that the linear interpolation of the sequence $\{x_k\}$ defined by
\begin{equation}
{z(t)}:=x_k+\frac{t-\lambda_\eta(k)}{\eta_k}(x_{k+1}-x_k),\;t\in[\lambda_\eta(k),\lambda_\eta(k+1))
\label{Eq:x(t)_def}
\end{equation}
is a perturbed solution \cite{benaim2005stochastic} to the associated differential inclusion, where $\{\eta_k\}$ serves as the stepsize in the subgradient methods. However, due to the non-Euclidean nature of Bregman subgradient methods, this methodology requires modification. Given a kernel function $\phi$ and a general set-valued mapping $\mathcal{H}$, we consider the following general differential inclusion: 
\begin{equation}
    \label{Eq:Breg_DI}
    \frac{d}{dt}\nabla\phi(x(t))\in-\mathcal{H}(x(t)), \text{ for almost all $t\geq0$.}
\end{equation}
Any absolutely continuous solution to \eqref{Eq:Breg_DI} is termed a trajectory of \eqref{Eq:Breg_DI}. The stable set of \eqref{Eq:Breg_DI} is defined as 
\begin{equation}
\label{eq:def_stable_set}
\cH^{-1}(0):=\{x\in\R^n:0\in\cH(x)\}.
\end{equation}
{For any $\cC^2$ convex function $\phi$}, the differential inclusion \eqref{Eq:Breg_DI} can be interpreted as a gradient flow equipped with the Riemannian metric induced by $\inner{\cdot,\cdot}_{\nabla^2\phi(x)}$, as demonstrated in works such as \cite{bolte2003barrier,alvarez2004hessian}. The corresponding discrete scheme of \eqref{Eq:Breg_DI} is given by
\begin{equation}
   \label{Eq:Breg_general_iterative}  \nabla\phi(x_{k+1})=\nabla\phi(x_k)-\eta_k(d_k+\xi_k),
\end{equation}
where $d_k$ is an evaluation of $\cH(x_k)$ with possible inexactness, and $\xi_k$ is the stochastic noise. This formulation, referred to as the general Bregman-type method, is notable for the versatile choices of $\phi$ and $\cH$. For the Bregman counterpart of the interpolated process \eqref{Eq:x(t)_def} in the Euclidean setting, we introduce the linear interpolation for the dual sequence $\{\nabla\phi(x_k)\}$:
\begin{equation}
 \label{Eq:interpolation}
 x(t):=\nabla\phi^*\left(\nabla\phi(x_k)+\frac{t-\lambda_\eta(k)}{\eta_k}(\nabla\phi(x_{k+1})-\nabla\phi(x_k))\right),\;t\in[\lambda_\eta(k),\lambda_\eta(k+1)). 
\end{equation}
If $\phi^*\in\cC^1(\R^n)$, then \eqref{Eq:interpolation} is well defined. Let $x^t(\cdot)$ denote the time-shifted curve of the interpolated process, i.e. $x^t(\cdot)=x(t+\cdot)$. We make the following assumptions on \eqref{Eq:Breg_DI} and \eqref{Eq:Breg_general_iterative} to ensure that the iterative sequence generated by \eqref{Eq:Breg_general_iterative} tracks a trajectory of \eqref{Eq:Breg_DI} asymptotically.
\begin{assumption}
\label{Assumption: DI}
    \begin{enumerate}
        \item {$\phi$ is a supercoecive Legendre kernel function over $\R^n$}, and $\nabla\phi$ is differentiable almost everywhere.
        \item The sequences $\{x_k\}$, $\{\nabla\phi(x_k)\}$ and $\{d_k\}$ are uniformly bounded.
        \item The stepsize $\{\eta_k\}$ satisfies $\sum_{k=0}^\infty\eta_k=\infty$ and $\lim_{k\rightarrow\infty}\eta_k=0$.
        \item For any $T>0$, the noise sequence $\{\xi_k\}$ satisfies 
        \begin{equation}
            \lim_{s\rightarrow\infty}\sup_{s\leq i\leq\Lambda_\eta(\lambda_\eta(s)+T)}\norm{\sum_{k=s}^i\eta_k\xi_k}=0.
            \label{Eq:noise_cond}
        \end{equation}       
        \item The set-valued mapping $\mathcal{H}$ has a closed graph. Additionally, for any unbounded increasing sequence $\{k_j\}$ such that $\{x_{k_j}\}$ converges to $\bar x$, it holds that 
        \begin{equation}
            \lim_{N\rightarrow\infty}{\rm dist}\left(\frac{1}{N}\sum_{j=1}^Nd_{k_j},\mathcal{H}(\bar x)\right)=0.
        \end{equation}
    \end{enumerate}
\end{assumption}
Here are some remarks on Assumption \ref{Assumption: DI}. 
\begin{remark}
    \label{rmk:assumption_DI}
    \begin{enumerate}
        \item {If the Legendre function $\phi$ is supercoecive, i.e. $\lim_{\|u\|\rightarrow\infty}\frac{\phi(u)}{\|u\|}=\infty$, then by \cite[Theorem 26.5,\,Corollary 13.3.1]{rockafellar1997convex}, $\phi^*\in\cC^1(\R^n)$ is strictly convex, and $(\nabla\phi)^{-1}=\nabla\phi^*$.}
        \item Uniform boundedness of either $\{x_k\}$ or $\{\nabla\phi(x_k)\}$ may suffice under mild conditions. For example, if $\phi$ is locally strongly convex, then the uniform boundedness of $\{\nabla\phi(x_k)\}$ implies the uniform boundedness of $\{x_k\}$. Conversely, if $\nabla\phi$ is locally Lipschitz continuous, then the uniform boundedness of $\{x_k\}$ leads to the uniform boundedness of $\{\nabla\phi(x_k)\}$.
        \item For a martingale difference noise sequence $\{\xi_k\}$,  i.e. $\E[\xi_{k+1}|\cF_k]=0$ holds  almost surely,  as shown in \cite{benaim2005stochastic}, \eqref{Eq:noise_cond} can be ensured almost surely under one of the following conditions:
        \begin{enumerate}
            \item Uniform boundedness of $\{\xi_k\}$ with $\eta_k=o\left(\frac{1}{\log k}\right)$ as utilized in \cite{castera2021inertial,xiao2023adam,xiao2023convergence}.
            \item Variance-bounded $\{\xi_k\}$, i.e. $\E[\norm{\xi_{k+1}}^2|\cF_k]\leq\sigma^2$ for some $\sigma>0$, and square-summable $\{\eta_k\}$, i.e. $\sum_{k=0}^\infty\eta_k^2<\infty$, as employed in \cite{davis2020stochastic}.
        \end{enumerate}
        \item Assumption \ref{Assumption: DI}(5) describes how $d_k$ approximates $\cH(x_k)$. This assumption is quite mild. A sufficient condition for it hold is that $d_k\in\cH^{\delta_k}(x_k)$ with $\lim_{k\to\infty}\delta_k=0$. 
    \end{enumerate}
\end{remark}

{
The following theorem demonstrates that the discrete sequence generated by the general Bregman-type method \eqref{Eq:Breg_general_iterative} asymptotically tracks a trajectory of the differential inclusion \eqref{Eq:Breg_DI}. Our key technical novelty, compared to previous works on stochastic approximation, especially \cite{benaim2005stochastic}, is in establishing convergence under weaker assumptions on the kernel regularity. In \cite{benaim2005stochastic}, the analysis focuses on differential inclusions of the form \(\dot{x} \in -\mathcal{G}(x)\) and discrete updates are explicitly defined by \(x_{k+1}=x_k - \eta_k g_k\) with \(g_k \in \mathcal{G}(x_k)\). Directly applying their results to our Bregman-type updates would require verifying convergence to a set-valued mapping \(\widetilde{\mathcal{H}}\), specifically establishing:
\[
\mathrm{dist}\left(\frac{x_{k} - \nabla\phi^*(\nabla\phi(x_k)-\eta_k d_k)}{\eta_k}, \widetilde{\mathcal{H}}(x_k)\right) \to 0.
\]
When the kernel \(\phi\) is \(\mathcal{C}^2(\mathbb{R}^n)\), this convergence can indeed be guaranteed, as we can explicitly choose \(\widetilde{\mathcal{H}}(x)=(\nabla^2\phi(x))^{-1}\mathcal{H}(x)\). However, our analysis relaxes this requirement significantly, assuming only that \(\nabla\phi\) is differentiable almost everywhere. Under this weaker condition, such a direct equivalence no longer holds. To overcome this issue, we adopt a carefully designed interpolation scheme for the dual sequence, as defined in \eqref{Eq:interpolation}. By employing this novel linear interpolation approach {on the sequence $\{\nabla\phi(x_k)\}$}, we successfully establish convergence without the need for twice continuous differentiability of \(\phi\). The complete details of our approach and proofs are provided in Appendix A.}

\begin{theorem}\label{subsequential-thm}
Suppose Assumption \ref{Assumption: DI} holds. Then, for any sequence $\{\tau_k\}_{k=1}^\infty\subset\R_+$, the set of shifted sequence $\{\nabla\phi(x^{\tau_k}(\cdot))\}_{k=1}^\infty$ is relatively compact in $\mathcal{C}(\R_+,\R^n)$. If $\lim_{k\rightarrow\infty}\tau_k=\infty$, then any cluster point $\bar y(\cdot)$ of $\{\nabla\phi(x^{\tau_k}(\cdot))\}_{k=1}^\infty$ belongs to $\mathcal{C}(\R+,\R^n)$. Define $\bar x(\cdot):=\nabla\phi^*(\bar y(\cdot))$. Then there exists a measurable $\bar d(t)\in\mathcal{H}(\bar x(t))$ satisfying 
\begin{equation}
\nabla\phi(\bar x(t))=\nabla\phi(\bar x(0))-\int_{0}^t\bar d(\tau)d\tau\quad for\;all\;t\geq0.
\label{Eq:DI_int_form}
\end{equation}
Equivalently, we have 
\[
\frac{d}{dt}\nabla\phi(\bar x(t))\in-\mathcal{H}(\bar x(t)),\;for\;almost\;all\;t\geq0.
\]
\end{theorem}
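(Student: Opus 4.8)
The plan is to pass to the dual variables $y_k := \nabla\phi(x_k)$ and to work with the differential inclusion in the dual space, where the interpolation \eqref{Eq:interpolation} becomes genuinely piecewise linear and the analysis reduces to the classical Euclidean stochastic approximation framework of \cite{benaim2005stochastic}. Writing $y(t) := \nabla\phi(x(t))$, the update \eqref{Eq:Breg_general_iterative} reads $y_{k+1} = y_k - \eta_k(d_k+\xi_k)$, and $y(\cdot)$ is exactly the piecewise-linear interpolant of $\{y_k\}$, so that $\nabla\phi(x^{\tau_k}(\cdot)) = y(\tau_k + \cdot)$. Introducing the piecewise-constant maps $D(\tau) := d_k$ and $\Xi(\tau) := \xi_k$ on $[\lambda_\eta(k),\lambda_\eta(k+1))$ yields the integral identity $y(t) = y(0) - \int_0^t (D(\tau)+\Xi(\tau))\,d\tau$. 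Since $\nabla\phi\circ\nabla\phi^* = {\rm id}$ on $\R^n$ by Remark \ref{rmk:assumption_DI}(1), once we set $\bar x := \nabla\phi^*(\bar y)$ the identity $\nabla\phi(\bar x(t)) = \bar y(t)$ is automatic, and the whole statement reduces to showing that every cluster point $\bar y$ of the shifted dual processes is absolutely continuous and satisfies $\bar y(t) = \bar y(0) - \int_0^t \bar d(\tau)\,d\tau$ for some measurable selection $\bar d(\tau)\in\cH(\nabla\phi^*(\bar y(\tau)))$.

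For the relative compactness I would invoke Arzel\`a--Ascoli on $\cC(\R_+,\R^n)$. Pointwise boundedness of $\{y(\tau_k+\cdot)\}$ follows from the uniform boundedness of $\{\nabla\phi(x_k)\}$ in Assumption \ref{Assumption: DI}(2). For equicontinuity I would split the increment $y(t_2)-y(t_1) = -\int_{t_1}^{t_2}D - \int_{t_1}^{t_2}\Xi$: the drift term is Lipschitz with constant $\sup_k\|d_k\|<\infty$ by Assumption \ref{Assumption: DI}(2), while the noise term is handled through Assumption \ref{Assumption: DI}(4). A key elementary observation is that taking $i=s$ in \eqref{Eq:noise_cond} forces $\|\eta_k\xi_k\|\to 0$, whence the consecutive dual increments $\|y_{k+1}-y_k\| = \|\eta_k(d_k+\xi_k)\|\to 0$; combined with \eqref{Eq:noise_cond} applied to the interior window, this makes the noise contribution to $y(t_2)-y(t_1)$ over any subinterval of a window uniformly small for large shifts and bounded otherwise. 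This gives equicontinuity, hence relative compactness, and since the limiting modulus is controlled solely by the bounded drift, every cluster point is Lipschitz, in particular continuous.

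To pass to the limit, fix $\tau_k\to\infty$ and a cluster point $\bar y = \lim_j y(\tau_{k_j}+\cdot)$, uniformly on compacts. Applying the integral identity to the shifted processes and letting $j\to\infty$, the noise integrals vanish by Assumption \ref{Assumption: DI}(4), while the shifted drifts $D(\tau_{k_j}+\cdot)$ are bounded in $L^\infty([0,T])$ and hence converge weakly-$*$, along a further subsequence, to some $\bar d\in L^\infty$. Testing against the indicator of $[0,t]$ gives $\bar y(t) = \bar y(0) - \int_0^t \bar d(\tau)\,d\tau$, so $\bar y$ is absolutely continuous with $\dot{\bar y} = -\bar d$ almost everywhere, and $\bar d$ is measurable for free.

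The main obstacle is the final identification $\bar d(t)\in\cH(\bar x(t))$ for almost all $t$, and here I would exploit Assumption \ref{Assumption: DI}(5) rather than a direct outer-semicontinuity argument on the continuous process. At a Lebesgue point $t$ of $\bar d$, the value $\bar d(t)$ is recovered as a limit of Ces\`aro averages of the discrete drifts $d_k$ over the index windows $\{k:\lambda_\eta(k)\in[\tau_{k_j}+t,\tau_{k_j}+t+\varepsilon]\}$ singled out by $\Lambda_\eta$. Because $\bar x$ is continuous and the interpolation converges uniformly, the states $x_k$ attached to these shrinking windows all converge to $\bar x(t)$; the averaging hypothesis in Assumption \ref{Assumption: DI}(5) then drives the associated averages toward $\cH(\bar x(t))$, and the closed graph of $\cH$ in Assumption \ref{Assumption: DI}(5) upgrades this limit to the membership $\bar d(t)\in\cH(\bar x(t))$. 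Matching the continuous-time windows to the discrete index windows through $\Lambda_\eta$, and checking that the averaging hypothesis can be applied uniformly across the nested subsequences extracted above, is the delicate bookkeeping I expect to be the crux of the proof.
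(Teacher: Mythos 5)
Your proposal follows the paper's own route for most of the way: passing to the dual sequence $y_k=\nabla\phi(x_k)$ so that \eqref{Eq:interpolation} becomes a genuine piecewise-linear interpolant, proving relative compactness via Arzel\`a--Ascoli with the drift/noise splitting (your inline argument, including the observation that $i=s$ in \eqref{Eq:noise_cond} forces $\|\eta_k\xi_k\|\to0$, is in substance the paper's Lemma \ref{le:shift convergence}, which compares $\nabla\phi(x(\cdot))$ to the solution of the noise-free piecewise-constant-drift equation), and extracting a weak limit $\bar d$ of the shifted drifts to obtain the integral identity --- the paper uses Banach--Alaoglu in $L^2([0,T])$ where you use weak-$*$ compactness in $L^\infty$, an immaterial difference. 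The genuine gap is at the step you yourself flag as the crux: the identification $\bar d(t)\in\cH(\bar x(t))$. Assumption \ref{Assumption: DI}(5) controls only \emph{unweighted cumulative} Ces\`aro averages $\frac1N\sum_{j=1}^N d_{k_j}$ along a single increasing index sequence whose states converge to one point $\bar x$. Your Lebesgue-point scheme instead produces $\frac{1}{\varepsilon}\int_t^{t+\varepsilon}D(\tau_{k_j}+\tau)\,d\tau$, which (up to boundary terms) equals $\frac{1}{\varepsilon}\sum_{k\in W_{j,\varepsilon}}\eta_k d_k$ over the window $W_{j,\varepsilon}=\{k:\lambda_\eta(k)\in[\tau_{k_j}+t,\tau_{k_j}+t+\varepsilon]\}$. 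This mismatches the hypothesis in two ways: (i) these are \emph{stepsize-weighted} averages, and since Assumption \ref{Assumption: DI}(3) permits wildly oscillating $\eta_k$, the weights need not be asymptotically uniform within a window; as $\cH$ is not assumed convex-valued in the general theorem, one cannot pass from unweighted to weighted convex combinations; (ii) even with uniform weights, these are \emph{per-window block} averages, and convergence of cumulative averages $\frac1N\sum_{j\leq N}d_{k_j}$ says nothing about block averages, which can oscillate while the cumulative mean converges. A further unresolved point is that the states in $W_{j,\varepsilon}$ converge only into a $\omega(\varepsilon)$-neighborhood of $\bar x(t)$ (with $\omega$ the modulus of continuity of $\bar x$), so applying Assumption \ref{Assumption: DI}(5) would require a diagonal extraction in $(\varepsilon,j)$ that you have not supplied.

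The paper avoids all of this with the Banach--Saks theorem: from the weakly convergent sequence $d^{\tau_k}(\cdot)$ it extracts a subsequence whose Ces\`aro means $\frac1N\sum_{k=1}^N d^{\tau_k}(\cdot)$ converge \emph{strongly} in $L^2([0,T])$ to $\bar d(\cdot)$, hence almost everywhere along a further subsequence. At a.e.\ fixed $s$, the quantity $\frac1N\sum_{k=1}^N d^{\tau_k}(s)$ is exactly an unweighted average of $d_{\Lambda_\eta(\tau_k+s)}$ over indices whose states $x_{\Lambda_\eta(\tau_k+s)}$ converge to the single point $\bar x(s)$ --- precisely the form of average that Assumption \ref{Assumption: DI}(5) governs --- after which the closed values of $\cH$ finish the argument. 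In short: average \emph{across shifts at a fixed time}, not \emph{across time at a fixed shift}. Replacing your within-window Lebesgue-point argument with this Banach--Saks across-shift averaging closes the gap, and the remainder of your proposal then stands.
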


The following assumption ensures that the trajectory subsequentially converges to the stable set of \eqref{Eq:Breg_DI}, and the Lyapunov function values converge.
\begin{assumption}\label{assumption_Sard_Lyapunov}
 There exists a continuous function $\Psi:\R^n\to\R$, such that the following conditions hold:
 \begin{enumerate}
     \item (Weak Morse-Sard). The set 
     {$\{\Psi(x): \mbox{for $x$ such that $0\in\cH(x)$} \}$}  has empty interior in $\R$.
     \item (Lyapunov function). $\Psi$ is lower bounded, i.e. $\liminf_{x\in\R^n}\Psi(x)>-\infty$. For any trajectory $z(t)$ of the differential inclusion \eqref{Eq:Breg_DI} with $z(0)\notin\cH^{-1}(0)$,  there exists $T > 0$ such that
     \[
     \Psi(z(T))<\sup_{t\in[0,T]}\Psi(z(t))\leq\Psi(z(0)).
     \]
 \end{enumerate}
\end{assumption}
{We say that a continuous function $\Psi$, satisfying Assumption \ref{assumption_Sard_Lyapunov}.2, is a Lyapunov function for the differential inclusion \eqref{Eq:Breg_DI} with a stable set $\cH^{-1}({0})$.}

Now, we are ready to present the main theorem in this section.
\begin{theorem}\label{convergence-thm-func-val}
Suppose Assumptions \ref{Assumption: DI} and \ref{assumption_Sard_Lyapunov} hold. Then any cluster point of $\{x_k\}$ lies in $\mathcal{H}^{-1}(0)$ and the function values $\{\Psi(x_k)\}_{k\geq1}$ converge.
\end{theorem}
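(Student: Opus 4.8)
The plan is to realize $\{x_k\}$ as the primal image of an asymptotic pseudotrajectory of the differential inclusion \eqref{Eq:Breg_DI} and then run a Lyapunov argument on its limit set. First I would record two preliminary facts. By Assumption \ref{Assumption: DI}(2) the iterates $\{x_k\}$ are bounded, so their limit set $L:=\bigcap_{N}\overline{\{x_k:k\ge N\}}$ is nonempty and compact. Moreover \eqref{Eq:Breg_general_iterative} gives $\nabla\phi(x_{k+1})-\nabla\phi(x_k)=-\eta_k(d_k+\xi_k)$; since $\{d_k\}$ is bounded and $\eta_k\to0$, while taking $i=s$ in \eqref{Eq:noise_cond} forces $\|\eta_s\xi_s\|\to0$, we obtain $\nabla\phi(x_{k+1})-\nabla\phi(x_k)\to0$, and hence $x_{k+1}-x_k\to0$ by continuity of $\nabla\phi^*$ (available from Assumption \ref{Assumption: DI}(1) and Remark \ref{rmk:assumption_DI}(1)) on the compact set containing $\{\nabla\phi(x_k)\}$. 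Consequently $L$ is connected.

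Next I would show that $L$ is internally chain transitive for \eqref{Eq:Breg_DI}. Theorem \ref{subsequential-thm} already supplies the essential ingredient: for any $\tau_k\to\infty$ the shifted dual curves $\nabla\phi(x^{\tau_k}(\cdot))$ are relatively compact and every cluster point equals $\nabla\phi(\bar x(\cdot))$ for some trajectory $\bar x(\cdot)$ of \eqref{Eq:Breg_DI}. In the terminology of \cite{benaim2005stochastic} this is precisely the asymptotic pseudotrajectory property. I would then reproduce the stochastic-approximation argument: through every point of $L$ passes a trajectory of \eqref{Eq:Breg_DI} that remains in $L$ (invariance, obtained by taking $\tau_k=\lambda_\eta(k_j)$ along a subsequence $x_{k_j}$ converging to the given point and using $x_{k+1}-x_k\to0$), and the trajectory segments furnished by Theorem \ref{subsequential-thm} can be concatenated into $(\varepsilon,T)$-chains lying in $L$. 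This gives internal chain transitivity of $L$. The only structural requirement on the right-hand side is the closed graph and local boundedness of $\mathcal{H}$ (Assumption \ref{Assumption: DI}), together with the closure property of Theorem \ref{subsequential-thm}; in all the concrete instances $\mathcal{H}$ is convex-valued, so one may alternatively quote the results of \cite{benaim2005stochastic,davis2020stochastic} verbatim.

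With $L$ internally chain transitive, I would invoke the Lyapunov characterization of such sets. Assumption \ref{assumption_Sard_Lyapunov}(2) makes $\Psi$ a Lyapunov function for \eqref{Eq:Breg_DI} with stable set $\mathcal{H}^{-1}(0)$, in the sense that along any trajectory starting off $\mathcal{H}^{-1}(0)$ the value of $\Psi$ strictly decreases over some finite horizon while never exceeding its initial value; Assumption \ref{assumption_Sard_Lyapunov}(1) guarantees that $\{\Psi(x):0\in\mathcal{H}(x)\}$ has empty interior. By the standard proposition on internally chain transitive sets admitting a Lyapunov function (see \cite[Prop.~3.27]{benaim2005stochastic} and the weak-Lyapunov variant used in \cite{davis2020stochastic}), these two facts force $L\subseteq\mathcal{H}^{-1}(0)$ and $\Psi$ to be constant on $L$. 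The first conclusion is exactly the assertion that every cluster point of $\{x_k\}$ lies in $\mathcal{H}^{-1}(0)$. Convergence of $\{\Psi(x_k)\}$ is then immediate: since $\{x_k\}$ is bounded and $\Psi$ continuous, every subsequential limit of $\{\Psi(x_k)\}$ equals $\Psi(x^{\ast})$ for some $x^{\ast}\in L$, and all such values coincide with the common constant, whence $\{\Psi(x_k)\}$ converges.

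I expect the main obstacle to be the passage from the tracking statement of Theorem \ref{subsequential-thm} to internal chain transitivity of $L$ in the genuinely non-Euclidean setting: the interpolation and tracking live in the dual variable $\nabla\phi(x)$, whereas the Lyapunov structure and the stable set live in the primal variable, so the invariance and chain-construction arguments must be transported across the homeomorphism $\nabla\phi$ while only assuming $\nabla\phi$ differentiable almost everywhere (no $\mathcal{C}^2$ kernel). A secondary delicate point is that only the weak form of the descent condition is available (strict decrease over some finite $T$ rather than for all $t>0$); this is, however, exactly the hypothesis required by the chain-transitive Lyapunov proposition, and the empty-interior (weak Morse--Sard) condition is what rules out heteroclinic-type limit sets and pins $\Psi|_L$ to a single value.
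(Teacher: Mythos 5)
You have correctly assembled the ingredients the paper also uses (boundedness, $\nabla\phi(x_{k+1})-\nabla\phi(x_k)\to0$, the tracking property of Theorem \ref{subsequential-thm}, transport across the homeomorphism $\nabla\phi$), but the final step of your plan contains a genuine gap: you invoke the chain-transitivity Lyapunov proposition (\cite[Prop.~3.27]{benaim2005stochastic}) and assert that the weak descent condition of Assumption \ref{assumption_Sard_Lyapunov}(2) ``is exactly the hypothesis required'' by it. It is not. That proposition needs $\Psi$ to be a Lyapunov function in the strong sense: strict decrease along \emph{every} solution for \emph{every} $t>0$ when $z(0)\notin\mathcal{H}^{-1}(0)$, and non-increase along solutions emanating from points \emph{of} $\mathcal{H}^{-1}(0)$. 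Assumption \ref{assumption_Sard_Lyapunov}(2) provides only one horizon $T$ (depending on the trajectory) with $\Psi(z(T))<\sup_{t\in[0,T]}\Psi(z(t))\leq\Psi(z(0))$, and imposes \emph{no constraint at all} on solutions started inside $\mathcal{H}^{-1}(0)$, along which $\Psi$ may increase; since differential inclusions admit multiple solutions through stationary points, an internally chain transitive set could in principle climb back up through segments issuing from $\mathcal{H}^{-1}(0)\cap L$, and the standard proof of $L\subset\mathcal{H}^{-1}(0)$ breaks down. Your fallback citation is also off: \cite{davis2020stochastic} does \emph{not} prove an ICT proposition under the weak hypothesis --- they avoid chain transitivity for precisely this reason and argue by a direct non-escape/upcrossing construction. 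A secondary problem is that the entire Bena\"{i}m--Hofbauer--Sorin ICT machinery (limit sets of asymptotic pseudotrajectories are ICT, invariance, concatenation of chains) is developed for Marchaud maps with nonempty compact \emph{convex} values and linear growth, whereas Assumption \ref{Assumption: DI}(5) only grants $\mathcal{H}$ a closed graph plus the Ces\`{a}ro-averaging condition; convex-valuedness holds in the paper's applications but is not available at the level of generality of this theorem, so quoting those results ``verbatim'' is not licensed.

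The paper bypasses chain transitivity entirely, and its route shows how the weak hypothesis is actually exploited. In Appendix B it proves Lemma \ref{prop:nonescape}: $\Psi(x(t))$ converges along the interpolated process, via counting upcrossings between the level sets $\mathcal{L}_\epsilon$ and $\mathcal{L}_{2\epsilon}$ for $\epsilon\notin\Psi(\mathcal{H}^{-1}(0))$ (such $\epsilon$ exist arbitrarily close to $\liminf\Psi(x_k)$ by the weak Morse--Sard condition). The crucial point is that the weak Lyapunov property is applied only at limit trajectories with $\Psi(\bar x(0))=\epsilon$, which are \emph{automatically} outside $\mathcal{H}^{-1}(0)$, and only over the finite horizon $[0,T]$ --- exactly what Assumption \ref{assumption_Sard_Lyapunov}(2) delivers. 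The theorem then follows by a short contradiction: track a subsequence converging to a cluster point $x^*$ via Theorem \ref{subsequential-thm}; if $x^*\notin\mathcal{H}^{-1}(0)$, the strict drop $\Psi(\bar x(T))<\Psi(x^*)$ contradicts $\Psi(\bar x(T))=\lim_{t\to\infty}\Psi(x(t))=\Psi(x^*)$, and convergence of $\{\Psi(x_k)\}$ follows from Lemma \ref{le:liminf_limsup}, which equates the $\liminf$/$\limsup$ of $\Psi(x_k)$ and of $\Psi(x(t))$ using the continuity of $\Psi\circ\nabla\phi^*$. To salvage your route you would first have to prove an ICT-plus-weak-Lyapunov proposition for non-Marchaud $\mathcal{H}$, which in effect amounts to redoing this upcrossing argument.
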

\begin{proof}
{Since $\{x_k\}$ is bounded, let $x^*$ be any cluster point of $\{x_k\}$, and ${\lim_{i\rightarrow\infty}}{x_{k_i}}=x^*$. By Theorem \ref{subsequential-thm}, up to a subsequence, $\nabla\phi(x^{\lambda_\eta(k_i)}(\cdot))\rightarrow\nabla\phi(\bar x(\cdot))$ for some $\bar x(\cdot)\in\cC(\R_+,\R^n)$. Note that $x_{{k_i}}=x^{\lambda_\eta(k_i)}(0)$, so we have 
\[
\begin{aligned}
\norm{\bar x(0)-x^*}\leq&\limsup_{i\rightarrow\infty}\left(\norm{\bar x(0)-x^{\lambda_\eta(k_i)}(0)}+\norm{x^{\lambda_\eta(k_i)}(0)-x_{k_i}}+\norm{x_{k_i}-x^*}\right)\\
=&\limsup_{i\rightarrow\infty}\left(\norm{\nabla\phi^*(\nabla\phi(\bar x(0)))-\nabla\phi^*(\nabla\phi(x^{\lambda_\eta(k_i)}(0)))}+\norm{x^{\lambda_\eta(k_i)}(0)-x_{k_i}}+\norm{x_{k_i}-x^*}\right)\\
=&0,
\end{aligned}
\]
where the last equality comes from the continuity of $\nabla\phi^*$.} Hence $\bar x(0)=x^*$. Suppose $x^*\notin \mathcal{H}^{-1}(0)$, then by Assumption \ref{assumption_Sard_Lyapunov}, there exists $T>0$, such that 
\[
\Psi(\bar x(T))<\sup_{t\in[0,T]}\Psi(\bar x(t))\leq\Psi(x^*).
\]
On the other hand, by Lemma \ref{prop:nonescape}, $\Psi(x(t))$ converges {as $t\to\infty$}. Therefore, we obtain
\[
\Psi(\bar x(T))={\lim_{i\rightarrow\infty}}\Psi(x^{\lambda_\eta(k_i)}(T))=
{\lim_{i\rightarrow\infty}}
\Psi(x({\lambda_\eta(k_i)}+T))=\lim_{t\rightarrow\infty}\Psi(x(t))=\Psi(x^*), 
\]
which leads to a contradiction. Therefore, $0\in \mathcal{H}(x^*)$. This completes the proof.
\end{proof}

The results in Theorem \ref{convergence-thm-func-val} illustrate that the sequence $\{x_k\}$ finds the stable set of \eqref{Eq:Breg_DI}, and the function values of Lyapunov function converge. Due to the versatility in selecting the set-valued mapping $\cH$, we will later show that if $\cH$ is chosen as the conservative field of $f$,  the sequence generated by the Bregman-type method \eqref{Eq:Breg_general_iterative} converges to a stationary point of the optimization problem \eqref{min-prob}.

{
In the remainder of this section, we aim to establish the \textbf{global stability} of the framework \eqref{Eq:Breg_general_iterative} under the %\textbf{random reshuffling}
noise sequence $\{\xi_{k}\}$ correspond to {\bf random reshuffling}. To that end, we impose the following assumptions on the stepsize $\{\eta_k\}$ and the noise $\{\xi_{k}\}$. 

\begin{assumption}
    \label{Assumption_Reshuffling}
    There exists an integer $N> 0$ such that 
\begin{enumerate}
    \item For any nonnegative integers $i,j < N$, it holds that $\eta_{kN+i} = \eta_{kN+j}$ for any $k\in \mathbb{N}_+$. 
    \item The sequence $\{\xi_k\}$ is uniformly bounded. Moreover, for any $j \in \mathbb{N}_+$, almost surely, it holds that $\sum_{{k}= jN}^{(j+1)N-1} \xi_{k+1} = 0$. 
\end{enumerate}
\end{assumption}
Here we make some remarks on Assumptions \ref{Assumption_Reshuffling}. Assumption \ref{Assumption_Reshuffling}(1) indicates that the stepsizes $\{\eta_k\}$ remain constant within each epoch, which is a standard setting in neural network training tasks.
Moreover, Assumption \ref{Assumption_Reshuffling}(2) encodes the essence of random reshuffling in the stochastic subgradient computation of $\{d_k\}$ in \eqref{Eq:Breg_general_iterative}. Section~\ref{sec:application} provides a concrete example in the context of finite-sum minimization, illustrating why this assumption is reasonable.

\begin{lemma}
    \label{Le_controlled_noise_RR}
    Suppose Assumption \ref{Assumption_Reshuffling} holds for the sequence of noises $\{\xi_{k}\}$ and stepsizes $\{\eta_k\}$. Moreover, the Lyapunov function $\Psi$ associated with $\cH$ is coercive. Then for any $\varepsilon > 0$ and $T>0$, there exists $\eta_{\varepsilon} > 0$ such that for any $\{\eta_k\}$ satisfying $\limsup_{k\to +\infty} \eta_k \leq \eta_{\varepsilon}$, it holds that 
    \begin{equation}
        \limsup_{s\to +\infty} \sup_{s \leq i\leq {\Lambda_\eta(\lambda_\eta(s) + T)} }  \norm{ \sum_{k = s}^i \eta_k \xi_{k+1}} \leq \varepsilon.  
    \end{equation}
\end{lemma}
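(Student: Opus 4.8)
The plan is to exploit the block (epoch) structure encoded in Assumption \ref{Assumption_Reshuffling}: within each block of $N$ consecutive indices the stepsize is constant and the attached noise increments sum to zero, so that every \emph{complete} epoch contributes exactly zero to the weighted partial sum $\sum_k\eta_k\xi_{k+1}$. Writing $e(k):=\floor{k/N}$ for the epoch containing index $k$ and $M:=\sup_k\norm{\xi_k}<\infty$ (finite by Assumption \ref{Assumption_Reshuffling}(2)), the crucial identity is that for every complete epoch $j$,
\[
\sum_{k=jN}^{(j+1)N-1}\eta_k\xi_{k+1}=\eta_{jN}\sum_{k=jN}^{(j+1)N-1}\xi_{k+1}=0,
\]
where the first equality uses the constancy of the stepsize over the epoch (Assumption \ref{Assumption_Reshuffling}(1)) and the second uses the epoch-wise cancellation of the noise (Assumption \ref{Assumption_Reshuffling}(2)).

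Next I would decompose the partial sum over an arbitrary index range $[s,i]$ into a head, a run of complete epochs, and a tail. Precisely, with $j_s=e(s)$ and $j_i=e(i)$ and assuming $j_s<j_i$ (the degenerate case $j_s=j_i$ is handled directly as a single block of at most $N$ terms), one writes $\sum_{k=s}^i\eta_k\xi_{k+1}$ as the head sum over $\{s,\dots,(j_s+1)N-1\}$, plus the complete epochs $j_s+1,\dots,j_i-1$, plus the tail sum over $\{j_iN,\dots,i\}$. Since only large $s$ matter for the $\limsup$, we may assume every epoch index appearing is positive, so the identity above applies and the middle part vanishes identically, \emph{regardless of how many complete epochs the window contains}; this is exactly why the eventual bound will not depend on $T$.

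It then remains to bound the two boundary blocks. The head has $(j_s+1)N-s\le N$ terms and the tail has $i-j_iN+1\le N$ terms, so at most $2N$ terms survive, each of magnitude at most $(\sup_{k\ge s}\eta_k)\,M$. Hence, uniformly over $s\le i\le\Lambda_\eta(\lambda_\eta(s)+T)$,
\[
\norm{\sum_{k=s}^i\eta_k\xi_{k+1}}\le 2NM\,\sup_{k\ge s}\eta_k .
\]
Taking the supremum over $i$, then $\limsup_{s\to\infty}$, and using $\lim_{s\to\infty}\sup_{k\ge s}\eta_k=\limsup_{k\to\infty}\eta_k\le\eta_\varepsilon$ produces the bound $2NM\eta_\varepsilon$. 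Choosing $\eta_\varepsilon:=\varepsilon/(2NM)$ --- which depends only on $\varepsilon$, $N$, $M$ and not on the particular admissible stepsize sequence --- gives the claim.

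The argument is structurally short, and I expect the only delicate point to be the bookkeeping of the decomposition: one must verify that after discarding the complete epochs exactly two partial blocks of length at most $N$ remain, handle the degenerate case $j_s=j_i$, and check that the noise index $\xi_{k+1}$ attached to a stepsize $\eta_k$ never escapes $k$'s epoch (so that the cancellation identity applies cleanly). I note that the coercivity of $\Psi$ does not seem to be needed for this particular estimate; it is a standing hypothesis of the surrounding global-stability development, where it guarantees compact sublevel sets and hence the boundedness required to invoke Assumption \ref{Assumption: DI} and Theorem \ref{convergence-thm-func-val}.
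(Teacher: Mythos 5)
Your proof is correct and follows essentially the same route as the paper's: decompose $\sum_{k=s}^i \eta_k\xi_{k+1}$ into a head block, complete epochs (which vanish by the constant-stepsize and zero-sum properties of Assumption \ref{Assumption_Reshuffling}), and a tail block, leaving at most $2N$ terms bounded by $2NM\sup_{k\ge s}\eta_k$, and then choose $\eta_\varepsilon=\varepsilon/(2NM)$. Your side remark that the coercivity of $\Psi$ is not actually used in this estimate is also consistent with the paper, whose proof likewise never invokes it.
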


We now state a theorem establishing the global stability of \eqref{Eq:Breg_general_iterative} under non-diminishing stepsizes  $\{\eta_k\}$. 
\begin{theorem}
    \label{The_convergence_Nondiminishing_RR}
Suppose Assumptions \ref{assumption_Sard_Lyapunov} and \ref{Assumption_Reshuffling} hold.  Additionally, assume $\phi$ is strongly convex and Lipschitz smooth, and that $d_k\in\cH^{\delta_k}(x_k)$ with $\lim_{k\to\infty}\delta_k=0$.  Then for any $\varepsilon > 0$, there exists $\eta_{\max}(\varepsilon) > 0$ such that for any $\{\eta_k\}$ satisfying $\limsup_{k\to +\infty} \eta_k \leq \eta_{\max}(\varepsilon)$, the following holds: 
\begin{equation}
    \limsup_{k\to +\infty}\;\mathrm{dist}\left( x_k, \{x \in \R^n: 0\in \cH(x)\}  \right) \leq \varepsilon. 
\end{equation}
Consequently, the sequence $\{x_k\}$ is uniformly bounded.
\end{theorem}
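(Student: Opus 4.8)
The plan is to transplant the diminishing-step analysis of Theorems~\ref{subsequential-thm} and~\ref{convergence-thm-func-val} to the non-vanishing regime, replacing the exact noise-vanishing condition~\eqref{Eq:noise_cond} by its quantitative surrogate from Lemma~\ref{Le_controlled_noise_RR}. Throughout I work in the dual variable $y_k=\nabla\phi(x_k)$ and exploit that, since $\phi$ is strongly convex and Lipschitz smooth, $\nabla\phi$ and $\nabla\phi^*=(\nabla\phi)^{-1}$ are globally bi-Lipschitz; consequently boundedness of $\{x_k\}$ and of $\{y_k\}$ are equivalent, distances in the two variables are comparable, and $\Psi\circ\nabla\phi^*$ is uniformly continuous on bounded sets. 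Writing $S:=\cH^{-1}(0)$, the goal becomes to show that the interpolated dual process~\eqref{Eq:interpolation} is a $\delta$-perturbed solution of~\eqref{Eq:Breg_DI} with $\delta\to0$ as $\eta_{\max}\to0$, and then to run a Lyapunov trapping argument that confines the iterates to an $O(\varepsilon)$-neighborhood of $S$.

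\textbf{Uniform descent for the differential inclusion.} First I would upgrade the single-trajectory decrease of Assumption~\ref{assumption_Sard_Lyapunov}(2) to a uniform quantitative version: for the prescribed $\varepsilon$ and any compact $K$, there exist $T>0$ and $\Delta>0$ such that every trajectory $z(\cdot)$ of~\eqref{Eq:Breg_DI} with $z(0)\in K$ and $\dist(z(0),S)\ge\varepsilon$ satisfies $\Psi(z(T))\le\Psi(z(0))-\Delta$. This follows by contradiction: since $\Psi$ is coercive and $\cH$ is locally bounded, trajectories issuing from $K$ remain in a fixed compact sublevel set and are equicontinuous, so Arzel\`a--Ascoli extracts a limit trajectory started outside $S$, to which Assumption~\ref{assumption_Sard_Lyapunov}(2) applies and contradicts the failure of a uniform $(T,\Delta)$.

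\textbf{Tracking under a non-vanishing step.} Fixing $T$ and $\Delta$, I would use Lemma~\ref{Le_controlled_noise_RR} to pick $\eta_{\max}$ so small that the accumulated noise $\|\sum_{k=s}^{i}\eta_k\xi_{k+1}\|$ over every window of length $T$ is eventually at most a prescribed $\varepsilon'$. Together with $\delta_k\to0$ (so that, by outer semicontinuity of $\cH$, the window-accumulated bias $\sum_k\eta_k\,\dist(d_k,\cH(x_k))$ vanishes as $s\to\infty$) and the epoch-wise cancellation $\sum_{k=jN}^{(j+1)N-1}\eta_k\xi_{k+1}=0$ of Assumption~\ref{Assumption_Reshuffling} (which removes any systematic noise drift), the exact dual recursion $y_{k+1}-y_k=-\eta_k(d_k+\xi_k)$ together with an outer-semicontinuity and Gr\"onwall estimate shows that~\eqref{Eq:interpolation} stays within $\delta(\eta_{\max},\varepsilon')$ in sup-norm of a genuine trajectory of~\eqref{Eq:Breg_DI} on each window. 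Transferring through $\Psi\circ\nabla\phi^*$ and shrinking $\eta_{\max}$ and $\varepsilon'$, the uniform descent then yields the discrete estimate $\Psi(x_{k'})\le\Psi(x_k)-\Delta/2$ across such a window whenever $\dist(x_k,S)\ge\varepsilon$.

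\textbf{Trapping and conclusion.} Using this descent and the coercivity of $\Psi$ (which renders sublevel sets compact), a first-exit argument precludes unbounded ascent of $\Psi$ and confines $\{x_k\}$ to a fixed compact sublevel set, giving the uniform boundedness asserted in the theorem. I would then show that the limit points of the Lyapunov values $\{\Psi(x_k)\}$ lie in a band of width less than $\Delta/2$---the per-window variation is controlled by the same tracking error, and sustained ascent is blocked by the strict decrease far from $S$---after which the contradiction from the proof of Theorem~\ref{convergence-thm-func-val} transfers: a cluster point $x^*$ with $\dist(x^*,S)\ge\varepsilon$ would produce, through the perturbed tracking, a shifted cluster point $\bar x(T)$ with $\Psi(\bar x(T))\le\Psi(x^*)-\Delta/2$, contradicting the band estimate since both $\Psi(x^*)$ and $\Psi(\bar x(T))$ are limit points of $\{\Psi(x_k)\}$; hence $\limsup_k\dist(x_k,S)\le\varepsilon$. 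The principal obstacle, I expect, is the tracking step under a non-vanishing step: the finite-window estimate must be made quantitative and uniform, and here the epoch-wise noise cancellation of random reshuffling is indispensable, since a persistent noise drift would preclude any neighborhood guarantee. Closely tied to this is the need to establish boundedness a priori---rather than assuming it as in Assumption~\ref{Assumption: DI}(2)---for which coercivity of $\Psi$ and local boundedness of $\cH$ must be combined to prevent the non-vanishing effective step from driving $\{x_k\}$ to infinity.
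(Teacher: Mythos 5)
Your proposal is correct in spirit and begins with exactly the same reduction as the paper: pass to the dual variable $z_k=\nabla\phi(x_k)$, use strong convexity and Lipschitz smoothness of $\phi$ to make $\nabla\phi$ and $\nabla\phi^*$ globally bi-Lipschitz, so that the Bregman recursion becomes a standard Euclidean inexact subgradient recursion $z_{k+1}=z_k-\eta_k(d_k+\xi_k)$ for the closed-graph map $\cH\circ\nabla\phi^*$, with the bias condition $d_k\in\cH^{\delta_k}(x_k)$ transferred to $d_k\in(\cH\circ\nabla\phi^*)^{\tilde\delta_k}(z_k)$ and the distance bound transferred back through the $1/\mu$-Lipschitz map $\nabla\phi^*$. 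Where you diverge is in what happens after the reduction: the paper's proof is three lines long at that point, because it invokes Lemma \ref{Le_controlled_noise_RR} to certify that the reshuffling noise is $(\tilde\varepsilon,T,\{\eta_k\})$-controlled and then cites \cite[Theorem~3.5]{xiao2023convergence} as a black box for both the uniform boundedness of $\{z_k\}$ and the $\limsup$ distance estimate. You instead reconstruct that external theorem from first principles (uniform quantitative Lyapunov descent via an Arzel\`a--Ascoli compactness argument, finite-window tracking with Gr\"onwall-type control, and a first-exit trapping argument). Your route is self-contained and makes visible exactly where the epoch-wise cancellation of Assumption \ref{Assumption_Reshuffling} and the coercivity of $\Psi$ enter; the paper's route is far shorter but opaque, delegating all of that machinery to the citation. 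Note that your use of coercivity of $\Psi$, though absent from the statement of Theorem \ref{The_convergence_Nondiminishing_RR} itself, matches the paper, since it appears among the hypotheses of Lemma \ref{Le_controlled_noise_RR} on which the paper's proof relies.

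Three steps in your sketch would need full care before this counts as a complete proof, and all three are precisely the content of the cited result. First, the circularity you acknowledge between tracking and boundedness: the window-accumulated bias estimate $\sup_k\mathrm{dist}(d_k,\cH(x_k))\to0$ via outer semicontinuity requires the iterates to stay in a compact set on which $\cH$ is locally bounded, while boundedness is what the trapping argument is meant to deliver; the resolution must be a genuine stopping-time bootstrap inside a fixed sublevel set, not a pointwise appeal to outer semicontinuity along the whole sequence. Second, the uniformization of Assumption \ref{assumption_Sard_Lyapunov}(2) into a single pair $(T,\Delta)$ valid for all trajectories starting in a compact set at distance at least $\varepsilon$ from $S$: the assumption gives a trajectory-dependent $T$, and the compactness-contradiction argument needs limits of trajectories to be trajectories, hence local boundedness and convexity-type structure of $\cH\circ\nabla\phi^*$ beyond what Theorem \ref{The_convergence_Nondiminishing_RR} states explicitly. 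Third, and most delicately, your final passage from a band estimate on the values $\{\Psi(x_k)\}$ to the distance estimate $\limsup_k\mathrm{dist}(x_k,S)\leq\varepsilon$ is asserted rather than proved: small $\Psi$-oscillation does not by itself confine iterates near $S$, and converting value bands into distance neighborhoods requires choosing band levels outside the critical value set, i.e.\ the weak Morse--Sard condition of Assumption \ref{assumption_Sard_Lyapunov}(1), as in the non-escape argument behind Theorem \ref{convergence-thm-func-val}. None of these is a wrong turn --- they are the known technical core of \cite[Theorem~3.5]{xiao2023convergence} --- but as written your outline is a program for reproving that theorem rather than a finished argument.
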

Theorem \ref{The_convergence_Nondiminishing_RR} implies that as long as the stepsizes $\{\eta_k\}$ are sufficiently small, the sequence $\{x_k\}$ remains stable. Notably, when random reshuffling noise is present, together with other essential mild conditions, the uniform boundedness of the iterates can be rigorously established, thereby justifying the boundedness assumption in Assumption \ref{Assumption: DI}.
}

\section{Applications}
\label{sec:application}
Based on the framework of the general Bregman-type method as outlined in \eqref{Eq:Breg_DI}, in this section, we consider three specific types of stochastic Bregman subgradient methods by choosing different types of kernel function $\phi$ and  set-valued mapping $\cH$. In the first two parts, we consider vanilla and single timescale  momentum based stochastic Bregman subgradient methods for unconstrained optimization problems. Subsequently, we extend our methods to the stochastic Bregman proximal subgradient method for solving constrained composite optimization problems.

\subsection{Stochastic Bregman subgradient method}
In this subsection, we consider the following stochastic Bregman subgradient update scheme:
\begin{equation}
    \label{Eq:BGD}
    \tag{SBG}
    \begin{aligned}x_{k+1}&\approx\arg\min_{x\in\R^n}\;\left\{\inner{g_k,x-x_k}+\frac{1}{\eta_k}\cD_\phi(x,x_k)\right\},\\
    \text{s.t. }& {\text{ $x_{k+1}$ is a $\nu_k$-optimal solution of the subproblem.} }
    \end{aligned}
\end{equation}
where $g_k=d_k+\xi_k$, $d_k\in D^{\delta_k}_f(x_k)$, and $\xi_k$ is the stochastic noise. The associated differential inclusion is 
\begin{equation}
    \frac{d}{dt}\nabla\phi(x(t))\in-D_f(x(t)).
    \label{Eq:DI_BGD}
\end{equation}
{Recall the definition of $\nu$-optimal solution in Section \ref{sec:notation}, $x_{k+1}$ is a $\nu_k$-optimal solution of the subproblem, if and only if $\|g_k+\frac{1}{\eta_k}\left(\nabla\phi(x_{k+1})-\nabla\phi(x_k)\right)\|\leq\nu_k$.} Given the allowance for inexact solutions in the SBG framework, we illustrate that a kernel Hessian preconditioned subgradient method fits within our framework, akin to the approach in the recently proposed ABPG in \cite{takahashi2023approximate}. The concept of Hessian preconditioning has also been examined in continuous settings as seen in \cite{alvarez2004hessian,bolte2003barrier}. Assuming the absence of the nonsmooth term in ABPG, the existence and nonsingularity of $\nabla^2\phi$ everywhere, and that $\sup_k\norm{(\nabla^2\phi(x_k))^{-1}}\leq c$ for some $c>0$, the ABPG updates scheme in \cite{takahashi2023approximate} is given by
\begin{equation}
\label{Eq:iSBG}
x_k^+=x_k-\eta_k(\nabla^2\phi(x_k))^{-1}g_k.
\end{equation}
This yields
\[
\begin{aligned}
&\nabla\phi(x_k^+)-(\nabla\phi(x_k)-\eta_kg_k)\\
=&\nabla\phi(x_k-\eta_k(\nabla^2\phi(x_k))^{-1}g_k)-(\nabla\phi(x_k)-\eta_kg_k)\\
=&\nabla\phi(x_k)-\eta_k\nabla^2\phi(x_k)(\nabla^2\phi(x_k))^{-1}g_k+o(\eta_k)-(\nabla\phi(x_k)-\eta_kg_k)\\
=&o(\eta_k),
\end{aligned}
\]
indicating that $\lim_{k\rightarrow\infty}\norm{g_k+\frac{\nabla\phi(x^+_k)-\nabla\phi(x_k)}{\eta_k}}=0$, which implies that $x^+_k$ is an approximate solution to the SBG subproblem. 

% For the polynomial kernel $\phi(x)=\frac{1}{2}\norm{x}^2+\frac{\sigma}{r}\norm{x}^r$ employed in \cite[Section 4]{ding2023nonconvex}, the implicit update scheme of SBG is given by 
% \[
% x_{k+1}=\frac{1+\sigma\norm{x_k}^{r-2}}{1+\sigma\norm{x_{k+1}}^{r-2}}x_k-\eta_k\frac{g_k}{1+\sigma\norm{x_{k+1}}^{r-2}}.
% \]
% Suppose Assumption \ref{assumption:SBGD} holds and note that $\lim_{k\rightarrow}\norm{x_{k+1}-x_k}=0$, we can also verify that 
% \begin{equation}
%  x^+_k=x_k-\eta_k\frac{g_k}{1+\sigma\norm{x_{k}}^{r-2}}
%  \label{Eq:SBGD2}
% \end{equation}
% \noindent is an approximate solution to the SBG subproblem. For simplicity, we omit the proof for \eqref{Eq:SBGD2}.

We make the following assumptions on \eqref{Eq:BGD}. 
\begin{assumption}
    \label{assumption:SBGD}
    \begin{enumerate}
        \item $\phi$ is a supercoecive Legendre kernel function over $\R^n$, and $\nabla\phi$ is differentiable almost everywhere. {Moreover, for any absolutely continuous mapping $z(\cdot)\in\cC(\R_+,\R^n)$, $\nabla^2\phi(z(s))$ is positive definite for almost all $s\geq0$.}
        \item The sequences $\{x_k\}$, $\{\nabla\phi(x_k)\}$ and $\{d_k\}$ are uniformly bounded almost surely.
        \item $\{\xi_k\}$ is a martingale difference noise, i.e. $\E[\xi_{k+1}|\cF_k]=0$ holds almost surely. $\sum_{k=0}^\infty \eta_k = \infty$, and the stepsize and noise satisfy one of the following two conditions:
        \begin{enumerate}
            \item $\{\xi_k\}$ is uniformly bounded and $\eta_k=o\left(\frac{1}{\log k}\right)$.
            \item $\{\xi_k\}$ has bounded variance, i.e. $\E[\norm{\xi_{k+1}}^2|\cF_k]\leq\sigma^2<\infty$, and $\sum_{k=0}^\infty\eta_k^2<\infty$.
        \end{enumerate}      
        \item $\lim_{k\rightarrow\infty}\delta_k=0$, and $\lim_{k\rightarrow\infty}\nu_k=0$.
    \end{enumerate}
\end{assumption}

{
Now, we make some remarks regarding Assumption \ref{assumption:SBGD}. Assumption \ref{assumption:SBGD}(1) and (3) are directly adapted from Assumption \ref{Assumption: DI}(1) and (4), respectively. Assumption \ref{assumption:SBGD}(4) characterizes how $d_k$ approximates $D_f(x_k)$ and quantifies the inexactness in solving the subproblem. Assumption \ref{assumption:SBGD}(2) can be ensured by global stability under randomly reshuffling noise $\{\xi_k\}$, as demonstrated by Theorem \ref{The_convergence_Nondiminishing_RR}. We elaborate on these points further below.

Consider the case where $f(x) = \frac{1}{N}\sum_{i=1}^N f_i(x)$ being coercive, each $f_i$ is path-differentiable and admits a convex-valued local bounded conservative field $D_{f_i}(x)$. At each iteration, we sample an index $i_k$ from $[N] := \{1, 2, \dots, N\}$ such that for each epoch $j \geq 0$, $\{i_k : jN \leq k < (j+1)N\} = [N]$ holds. We then compute $d_k \in D_{f_{i_k}}(x_k)$. This setup corresponds to the standard random reshuffling framework. For each $k\geq 0$, let $j_k \geq 0$ be such that $k \in [j_kN, (j_k+1)N)$, it follows that $d_k \in D_f(x_k) + \xi_{k+1}$, where $\xi_{k+1}$ represents the evaluation noise. Defining $\delta_k = 2N\sum_{j_k N \leq l < (j_k +1)N} \norm{x_{l+1} - x_l}$, 
we have $d_k \in D_{f_{i_k}}(x_k) \subseteq D_{f_{i_k}}^{\delta_k / (2N)}(x_{j_kN})$. Consequently,  
\begin{equation*}
\frac{1}{N} \sum_{j_kN \leq l < (j_k+1)N} d_l \in \frac{1}{N} \sum_{j_kN \leq l < (j_k+1)N} D_{f_{i_l}}^{\delta_k / (2N)}(x_{j_kN}) \subseteq D_f^{\delta_k / 2}(x_{j_kN}) \subseteq D_f^{\delta_k}(x_k).
\end{equation*}
Furthermore, it holds that $\frac{1}{N} \sum_{jN \leq k < (j+1)N} \xi_{k+1} = 0$. Therefore, we can conclude that Assumption \ref{Assumption_Reshuffling}(2) corresponds to the setting where the stochastic subgradients $\{d_k\}$ are generated by the random reshuffling sampling technique. Therefore, applying Theorem \ref{The_convergence_Nondiminishing_RR}, we can obtain the uniform boundedness of $\{x_k\}$. {Since \(D_f\) is locally bounded and \(\phi\) is locally Lipschitz (implying that \(\nabla\phi\) is locally bounded), the boundedness of the sequence \(\{x_k\}\) guarantees that both \(\{\nabla\phi(x_k)\}\) and \(\{d_k\}\) are uniformly bounded (see, e.g., \cite[Proposition 5.15]{RockWets98}).}

To ensure the convergence of \eqref{Eq:BGD}, we make the following assumptions on $f$.
}

\begin{assumption}
    \label{assumption:Lyapunov_BGD}
    \begin{enumerate}
        \item $f$ is lower bounded, i.e. $\liminf_{x\in\R^n}f(x)>-\infty$. Moreover, $f$ is a potential function that admits $D_f$ as its convex-valued conservative field. 
        \item The critical value set $\{f(x):\;0\in D_f(x)\}$ has empty interior in $\R$.
    \end{enumerate}
\end{assumption}
{Assumption \ref{assumption:Lyapunov_BGD} is a specific instance of Assumption \ref{assumption_Sard_Lyapunov}. In particular, $f$ is the Lyapunov function and Assumption \ref{assumption:Lyapunov_BGD}(2) requires a weak Sard condition for \(f\). Recall that the classical Sard theorem for differentiable functions in \(\mathbb{R}^n\) states that the set of critical values has Lebesgue measure zero.}

We have the following two propositions.
\begin{lemma}
    \label{le:SBGD}
    Suppose Assumptions \ref{assumption:SBGD} and \ref{assumption:Lyapunov_BGD} hold. For any $d^e_k$ such that $\norm{d^e_k}\leq\nu_k$, and any increasing sequence $\{k_j\}$ such that $\{x_{k_j}\}$ converges to $\bar x$, it holds that 
    \[
    \lim_{N\rightarrow\infty}\dist\left(\frac{1}{N}\sum_{j=1}^N(d_{k_j}+d^e_{k_j}),D_f(\bar x)\right)=0.
    \]
\end{lemma}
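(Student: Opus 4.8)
The plan is to reduce the Cesàro-average statement to a per-term estimate via outer semicontinuity of $D_f$, and then to upgrade that per-term estimate to the average by exploiting the convexity of $D_f(\bar x)$. First I would record that $D_f$ is nonempty closed valued and locally bounded, and convex valued by Assumption \ref{assumption:Lyapunov_BGD}(1); hence $D_f(\bar x)$ is a nonempty compact convex set, so the Euclidean projection onto it is well defined. Unpacking $d_{k_j}\in D_f^{\delta_{k_j}}(x_{k_j})$ according to the definition of the $\delta$-perturbed set, for each $j$ there exist a point $y_j$ with $\norm{y_j-x_{k_j}}\leq\delta_{k_j}$ and a vector $w_j\in D_f(y_j)$ with $\norm{d_{k_j}-w_j}\leq\delta_{k_j}$. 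Since $x_{k_j}\to\bar x$ and $\delta_{k_j}\to0$ by Assumption \ref{assumption:SBGD}(4), it follows that $y_j\to\bar x$.

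Next I would invoke the local containment property of $D_f$ recorded after Definition \ref{def:chain_rule_set}: for every $\varepsilon>0$ there is a neighborhood $V$ of $\bar x$ with $\cup_{y\in V}D_f(y)\subset D_f(\bar x)+\varepsilon\mathbb{B}^n$. Choosing $J_0$ so that $y_j\in V$ for all $j\geq J_0$, I obtain $\dist(w_j,D_f(\bar x))\leq\varepsilon$, and combining this with $\norm{d_{k_j}-w_j}\leq\delta_{k_j}$ and $\norm{d^e_{k_j}}\leq\nu_{k_j}$ gives the per-term bound
\[
\dist\left(d_{k_j}+d^e_{k_j},\,D_f(\bar x)\right)\leq\varepsilon+\delta_{k_j}+\nu_{k_j},\qquad j\geq J_0.
\]

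Finally I would pass to the average. Let $P_j$ denote the projection of $d_{k_j}+d^e_{k_j}$ onto the compact convex set $D_f(\bar x)$, so that $\frac{1}{N}\sum_{j=1}^N P_j\in D_f(\bar x)$ by convexity and
\[
\dist\left(\frac{1}{N}\sum_{j=1}^N(d_{k_j}+d^e_{k_j}),\,D_f(\bar x)\right)\leq\frac{1}{N}\sum_{j=1}^N\norm{d_{k_j}+d^e_{k_j}-P_j}.
\]
I split the right-hand sum at $J_0$. The terms with $j<J_0$ are finitely many and uniformly bounded (using the uniform boundedness of $\{d_k\}$ from Assumption \ref{assumption:SBGD}(2), the boundedness of $\{d^e_k\}$ since $\nu_k\to0$, and the boundedness of $D_f(\bar x)$), so their average tends to $0$ as $N\to\infty$; the terms with $j\geq J_0$ are each at most $\varepsilon+\delta_{k_j}+\nu_{k_j}$, and since $\delta_{k_j}+\nu_{k_j}\to0$ the Cesàro mean of $\{\delta_{k_j}+\nu_{k_j}\}$ also tends to $0$. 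Taking $\limsup_{N\to\infty}$ therefore yields a bound of $\varepsilon$, and since $\varepsilon>0$ is arbitrary the limit is $0$.

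The main obstacle is bridging the per-term estimate and the average: individual vectors lying near a set do not in general have an average lying near that set, so the convexity of $D_f(\bar x)$ (guaranteed by Assumption \ref{assumption:Lyapunov_BGD}(1)) is essential, as it lets me replace each term by its projection while keeping the averaged projection inside $D_f(\bar x)$. The only other subtlety is the correct handling of the $\delta_{k_j}$-perturbation through the outer semicontinuity of the conservative field, combined with the standard Cesàro argument that discards the finitely many early iterates whose images need not yet be close to $D_f(\bar x)$.
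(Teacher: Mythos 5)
Your proposal is correct and follows essentially the same route as the paper's proof: you establish the per-term estimate $\dist(d_{k_j}+d^e_{k_j},D_f(\bar x))\to0$ via the outer semicontinuity and local boundedness of $D_f$ together with $\delta_{k_j},\nu_{k_j}\to0$, and then pass to the Ces\`aro average using the convexity of $D_f(\bar x)$. Your projection-based argument with the split at $J_0$ is simply an explicit unpacking of the paper's one-line appeal to Jensen's inequality (convexity of the distance function to a convex set), so the two proofs coincide in substance.
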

\begin{proof}
    By the inexact condition in \eqref{Eq:BGD}, it follows that there exists $d^e_k$, such that $\norm{d^e_k}\leq\nu_k$ and 
    \[
    \nabla\phi(x_{k+1})=\nabla\phi(x_k)-\eta_k\left(d_k+d^e_k+\xi_k\right).
    \]
    Define $\tilde d_k:=d_k+d^e_k\in D_f^{\tilde\delta_k}(x_k)$, where $\tilde\delta_k=\delta_k+\nu_k$. Note that $\lim_{k\rightarrow\infty}\tilde\delta_k=0$. Since $D_f$ has a closed graph, then for any $\{x_{k_j}\}$ converging to $\bar x$, it holds that $\lim_{j\rightarrow\infty}\dist\left(\tilde d_{k_j},D_f(\bar x)\right)=0$. Note that $D_f(\bar x)$ is a convex set, by Jensen's inequality, we have
    \[
    \lim_{N\rightarrow\infty}\dist\left(\frac{1}{N}\sum_{j=1}^N\tilde d_{k_j},D_f(\bar x)\right)=\lim_{N\rightarrow\infty}\frac{1}{N}\sum_{j=1}^N\dist\left(\tilde d_{k_j},D_f(\bar x)\right)=0.
    \]
    This completes the proof.
\end{proof}

\begin{proposition}
    \label{prop:Lyapunov_BGD}
    Suppose Assumptions \ref{assumption:SBGD} and \ref{assumption:Lyapunov_BGD} hold. Then $f$ is a Lyapunov function for the differential inclusion \eqref{Eq:DI_BGD} with the stable set $\left\{x\in\R^n:\;0\in D_f(x)\right\}$.
\end{proposition}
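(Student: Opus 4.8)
The goal is to verify that $\Psi=f$ fulfills Assumption \ref{assumption_Sard_Lyapunov}(2) for the inclusion \eqref{Eq:DI_BGD}, since lower boundedness is part of Assumption \ref{assumption:Lyapunov_BGD}(1) and the weak Morse--Sard condition of Assumption \ref{assumption_Sard_Lyapunov}(1) is exactly Assumption \ref{assumption:Lyapunov_BGD}(2). Thus the entire content is the strict descent of $f$ along any trajectory of \eqref{Eq:DI_BGD} that does not start at a $D_f$-stationary point. The plan is to differentiate $t\mapsto f(z(t))$ along a trajectory $z(\cdot)$ and to exhibit this derivative as a nonpositive quadratic form in the driving selection $\bar d(t)$.

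First I would fix a trajectory $z(\cdot)$ of \eqref{Eq:DI_BGD}, so that $z$ is absolutely continuous and there is a measurable selection $\bar d(t)\in D_f(z(t))$ with $\frac{d}{dt}\nabla\phi(z(t))=-\bar d(t)$ for almost all $t$. Since $f$ is path-differentiable with convex-valued conservative field $D_f$ (Assumption \ref{assumption:Lyapunov_BGD}(1)), the chain rule of Definition \ref{conservative-field-def} gives $\frac{d}{dt} f(z(t)) = \inprod{\bar d(t)}{\dot z(t)}$ for almost all $t$. The crux is to rewrite $\dot z(t)$ in terms of $\bar d(t)$: at almost every $t$ the curve $z$ is differentiable and, by Assumption \ref{assumption:SBGD}(1), $\nabla\phi$ is twice differentiable at $z(t)$ with positive definite Hessian $\nabla^2\phi(z(t))$; the ordinary chain rule for the composition $\nabla\phi\circ z$ then yields $\nabla^2\phi(z(t))\dot z(t)=\frac{d}{dt}\nabla\phi(z(t))=-\bar d(t)$, whence $\dot z(t)=-(\nabla^2\phi(z(t)))^{-1}\bar d(t)$. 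Substituting back,
\[
\frac{d}{dt} f(z(t)) = -\inprod{\bar d(t)}{(\nabla^2\phi(z(t)))^{-1}\bar d(t)}\le 0 \quad\text{for almost all }t,
\]
because $(\nabla^2\phi(z(t)))^{-1}$ is positive definite a.e. Integrating shows $f\circ z$ is non-increasing, so $\sup_{t\in[0,T]}f(z(t))=f(z(0))$ for every $T>0$, which already secures the second inequality in Assumption \ref{assumption_Sard_Lyapunov}(2).

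For the strict inequality, suppose $z(0)\notin\{x:0\in D_f(x)\}$, i.e. $\dist(0,D_f(z(0)))=:c>0$. Using that $D_f$ is locally bounded with closed graph---so that $\cup_{y\in V}D_f(y)\subset D_f(z(0))+\tfrac{c}{2}\mathbb{B}^n$ on some neighborhood $V$ of $z(0)$ (the outer-semicontinuity property recorded after Definition \ref{def:chain_rule_set})---together with the continuity of $z$, I would choose $t_0>0$ small enough that $z(t)\in V$, and hence $\norm{\bar d(t)}\ge c/2>0$, for all $t\in[0,t_0]$. Then the integrand $\inprod{\bar d(t)}{(\nabla^2\phi(z(t)))^{-1}\bar d(t)}$ is strictly positive for almost all $t\in[0,t_0]$, so its integral over $[0,t_0]$ is strictly positive and $f(z(t_0))<f(z(0))$. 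Taking $T=t_0$ gives $f(z(T))<\sup_{t\in[0,T]}f(z(t))\le f(z(0))$, completing the verification.

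The main obstacle is the identity $\dot z(t)=-(\nabla^2\phi(z(t)))^{-1}\bar d(t)$, which must be justified \emph{without} assuming $\phi\in\cC^2(\R^n)$: a priori, differentiability of $\nabla\phi$ almost everywhere on $\R^n$ does not guarantee differentiability along the (possibly measure-zero) range of $z$. This is precisely what the curve-wise hypothesis in Assumption \ref{assumption:SBGD}(1) supplies, so the proof hinges on invoking it both to apply the elementary chain rule for $\nabla\phi\circ z$ at almost every $t$ and to obtain the positive definiteness needed for strict descent. A secondary point requiring care is the measure-theoretic step that a.e.-positivity of the integrand on the positive-measure set $[0,t_0]$ forces a strict decrease, which follows from the boundedness of $\bar d$ (local boundedness of $D_f$) and standard integration.
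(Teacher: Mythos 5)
Your proposal is correct, and its core coincides with the paper's proof: both hinge on the curve-wise part of Assumption \ref{assumption:SBGD}(1) to get, for almost all $t$, the identity $\frac{d}{dt}\nabla\phi(z(t))=\nabla^2\phi(z(t))\dot z(t)$ (equivalently your $\dot z(t)=-(\nabla^2\phi(z(t)))^{-1}\bar d(t)$), and then combine it with the conservative-field chain rule to exhibit $\frac{d}{dt}f(z(t))$ as minus a positive semidefinite quadratic form, so that $f\circ z$ is nonincreasing; the paper writes this as $\frac{d}{ds}f(z(s))\ni-\inner{\nabla^2\phi(z(s))\dot z(s),\dot z(s)}$, which is the same computation viewed from the primal side rather than through the selection $\bar d$. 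Where you genuinely diverge is the strict-decrease step. The paper argues by contradiction: if $f(z(t))=f(z(0))$ for all $t$, then $\lambda_{\min}(\nabla^2\phi(z(s)))\norm{\dot z(s)}^2=0$ a.e., so $\dot z=0$ a.e., hence $z\equiv z(0)$ by absolute continuity, and then $0=\frac{d}{dt}\nabla\phi(z(t))\in-D_f(z(0))$ contradicts non-stationarity. You instead give a direct, quantitative argument: using the local boundedness and graph-closedness of $D_f$ (via the $\varepsilon$-enlargement property $\cup_{y\in V}D_f(y)\subset D_f(z(0))+\frac{c}{2}\mathbb{B}^n$ recorded in Section 2) you lower-bound $\norm{\bar d(t)}\geq c/2$ on a short initial interval and conclude $f(z(t_0))<f(z(0))$ from a.e.-positivity of the integrand on a positive-measure set. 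Your route buys slightly more---strict decrease on \emph{every} sufficiently small initial interval, with an explicit dissipation mechanism, and no need for the ``constancy implies stationarity'' step---at the cost of invoking the outer-semicontinuity enlargement; the paper's contradiction argument is shorter and uses only the positive definiteness along the curve. One small remark: the strict positivity of the integral does not actually need boundedness of $\bar d$ as you suggest at the end---a nonnegative measurable function that is strictly positive a.e.\ on $[0,t_0]$ already has strictly positive integral (boundedness is only relevant for integrability, which you get for free since $f\circ z$ is absolutely continuous, $f$ being locally Lipschitz and $z$ absolutely continuous).
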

\begin{proof}
Consider any trajectory $z(\cdot)$ for the differential inclusion \eqref{Eq:DI_BGD} with $0\notin D_f(z(0))$. We have that for almost all $s\geq0$,
\[
\begin{aligned}
\frac{d}{ds}f(z(s))=\inner{D_f(z(s)),\dot z(s)}\ni-\inner{\nabla^2\phi(z(s))\dot z(s),\dot z(s)}%\leq-\lambda_{\min}\left(\nabla^2\phi(z(s))\right)\norm{\dot z(s)}^2.
\end{aligned}
\]
Therefore, for any $t\geq0$, it holds that
\[
f(z(t))-f(z(0))=-\int_0^t\inner{\nabla^2\phi(z(s))\dot z(s),\dot z(s)}ds\leq-\int_0^t\lambda_{\min}\left(\nabla^2\phi(z(s))\right)\norm{\dot z(s)}^2ds\leq0.
\]
We now prove the required result by contradiction. Suppose for any $t\geq0$, $f(z(t))=f(z(0))$, then, we have $\lambda_{\min}\left(\nabla^2\phi(z(s))\right)\norm{\dot z(s)}^2=0$ for almost all $s\geq0$. Since $\lambda_{\min}(\nabla^2\phi(z(\cdot)))>0$ almost everywhere in $\R_+$, then $\dot z(s)=0$ for almost all $s\geq0$. Since $z(\cdot)$ is absolutely continuous, therefore, $z(t)\equiv z(0)$ for any $t\geq0$. Then, $0=\frac{d}{dt}\nabla\phi(z(t))\in-D_f(z(t))=-D_f(z(0))$. This is contradictory to the fact that $z(0)$ is not a $D_f$-stationary point of $f$. Therefore, there exists $T>0$, such that $f(z(T))<\sup_{t\in[0,T]}f(z(t))\leq f(z(0))$. This completes the proof.
\end{proof}
By Lemma \ref{le:SBGD}, Proposition \ref{prop:Lyapunov_BGD} and Theorem \ref{convergence-thm-func-val}, we can directly derive the following convergence results for \eqref{Eq:BGD}.
\begin{theorem}
    \label{thm:BGD}
    Suppose Assumptions \ref{assumption:SBGD} and \ref{assumption:Lyapunov_BGD} hold. {Then almost surely,} any cluster point of $\{x_k\}$ generated by \eqref{Eq:BGD} is a $D_f$-stationary point and the function values $\{f(x_k)\}$ converge. 
\end{theorem}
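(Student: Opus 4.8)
The plan is to recognize the update scheme \eqref{Eq:BGD} as a special instance of the general Bregman-type iteration \eqref{Eq:Breg_general_iterative} with the set-valued mapping $\cH$ taken to be the conservative field $D_f$, and then to verify that Assumptions \ref{Assumption: DI} and \ref{assumption_Sard_Lyapunov} both hold almost surely. Once this is done, Theorem \ref{convergence-thm-func-val} applies verbatim and delivers the conclusion. First I would rewrite the inexact subproblem condition in the form of \eqref{Eq:Breg_general_iterative}: exactly as in the proof of Lemma \ref{le:SBGD}, the $\nu_k$-optimality of $x_{k+1}$ produces an error term $d^e_k$ with $\norm{d^e_k}\leq\nu_k$ such that
\[
\nabla\phi(x_{k+1})=\nabla\phi(x_k)-\eta_k\big((d_k+d^e_k)+\xi_k\big).
\]
Setting $\tilde d_k:=d_k+d^e_k$, this is precisely \eqref{Eq:Breg_general_iterative} with $\cH=D_f$ and evaluation $\tilde d_k$.

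Next I would verify Assumption \ref{Assumption: DI} clause by clause. Clause (1) is immediate from Assumption \ref{assumption:SBGD}(1). Clause (2) follows because $\{x_k\}$, $\{\nabla\phi(x_k)\}$, and $\{d_k\}$ are uniformly bounded almost surely by Assumption \ref{assumption:SBGD}(2), and since $\nu_k\to0$ the perturbation $\{d^e_k\}$, hence also $\{\tilde d_k\}$, is bounded. Clause (3) is contained in Assumption \ref{assumption:SBGD}(3). The crucial clause (4), the noise condition \eqref{Eq:noise_cond}, is where the almost-sure qualifier enters: since $\{\xi_k\}$ is a martingale difference sequence and the stepsize/noise pair satisfies one of the two alternatives in Assumption \ref{assumption:SBGD}(3), Remark \ref{rmk:assumption_DI}(3), i.e. the stochastic approximation estimates of \cite{benaim2005stochastic}, guarantees \eqref{Eq:noise_cond} holds almost surely. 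For clause (5), $D_f$ has a closed graph by Definition \ref{conservative-field-def}, and the required averaging condition is exactly Lemma \ref{le:SBGD} applied to $\tilde d_{k_j}=d_{k_j}+d^e_{k_j}$ along any convergent subsequence $\{x_{k_j}\}$.

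It then remains to check Assumption \ref{assumption_Sard_Lyapunov}. The weak Morse--Sard condition on the set of critical values is precisely Assumption \ref{assumption:Lyapunov_BGD}(2), while the lower boundedness and the strict Lyapunov descent of $f$ along trajectories of \eqref{Eq:DI_BGD} are established in Proposition \ref{prop:Lyapunov_BGD}. With both Assumptions \ref{Assumption: DI} and \ref{assumption_Sard_Lyapunov} in force on a probability-one event, I would invoke Theorem \ref{convergence-thm-func-val} to conclude that, almost surely, every cluster point of $\{x_k\}$ lies in $\cH^{-1}(0)=\{x\in\R^n:0\in D_f(x)\}$, i.e. is a $D_f$-stationary point, and that the function values $\{f(x_k)\}$ converge.

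I do not anticipate a serious obstacle: the argument is essentially a corollary, assembling the two supporting results into the general machinery of Section 3. The only genuinely nonroutine ingredient is clause (4) of Assumption \ref{Assumption: DI}, but this has already been packaged into Remark \ref{rmk:assumption_DI}(3), and the two substantive verifications — the conservative-field averaging and the Lyapunov descent — are carried out in Lemma \ref{le:SBGD} and Proposition \ref{prop:Lyapunov_BGD}, respectively. Accordingly I would present the proof as a direct application of Theorem \ref{convergence-thm-func-val}, with the main care being to state the clause-by-clause correspondence cleanly and to flag that the almost-sure nature of the conclusion descends from the almost-sure validity of the noise condition.
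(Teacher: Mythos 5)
Your proposal is correct and follows exactly the route the paper takes: the paper's proof of Theorem \ref{thm:BGD} is precisely the one-line assembly of Lemma \ref{le:SBGD}, Proposition \ref{prop:Lyapunov_BGD}, and Theorem \ref{convergence-thm-func-val}, with the inexactness absorbed into $\tilde d_k=d_k+d^e_k$ and the noise condition supplied almost surely by Remark \ref{rmk:assumption_DI}(3). Your clause-by-clause verification of Assumptions \ref{Assumption: DI} and \ref{assumption_Sard_Lyapunov} simply makes explicit what the paper leaves implicit.
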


% \paragraph{Stochastic Bregman proximal point method} Let $f:\R^n\times\Omega\rightarrow\R$, and $\E_\omega[f(x,\omega)]=f(x)$, for any $x\in\R^n$. We consider the following stocahstic Bregman PPA update:
% \begin{equation}
% \begin{aligned}
% x_{k+1}&\approx\argmin\left\{f(x,\omega_k)+\frac{1}{\eta_k}\cD_\phi(x,x_k)\right\},\\
% \text{s.t. }& \dist\left(0,D_f(x_{k+1},\omega_k)+\frac{1}{\eta_k}\left(\nabla\phi(x_{k+1})-\nabla\phi(x_k)\right)\right)\leq\nu_k.
% \end{aligned}
% \label{Eq:SBPPA}
% \end{equation}

%  Moreover, suppose the assumptions in \cite[Theorem 3.10]{bolte2022subgradient} hold, it holds that $D_f:=\E_\omega[D_f(\cdot,\omega)]$ is a conservative field for $f$. Then, we have that there exist $d_k\in D_f(x_{k+1})$, and martingale difference noise $\{\xi_k\}$, such that $g_k=d_k+\xi_k$, and 
%  \[
%  \norm{g_k+\frac{\nabla\phi(x_{k+1})-\nabla\phi(x_k)}{\eta_k}}\leq\nu_k.
%  \]
%  Moreover, note that $\lim_{k\rightarrow\infty}\norm{x_{k+1}-x_k}=0$, we have that $d_k\in D^{\delta_k}(x_k)$, with $\lim_{k\rightarrow\infty}\delta_k=0$. Therefore, \eqref{Eq:SBPPA} is a special case for the general framework \eqref{Eq:BGD}.

\subsection{Momentum based stochastic Bregman subgradient method}
{In this section, we introduce a momentum-based stochastic Bregman subgradient method (MSBG). The momentum technique is widely adopted in practice, particularly for training neural networks using stochastic first-order methods such as SGDM, Adam, and AdamW, primarily due to its empirical effectiveness. Nevertheless, a complete theoretical understanding of momentum's benefits, even within smooth optimization contexts, remains elusive. While certain studies have attempted to interpret momentum through generalization perspectives such as margin analysis and algorithmic stability (\cite{jelassitowards,ramezani2024generalization}), such analysis is beyond the scope of pure optimization, and thus we omit further discussion of these issues here.

From an optimization perspective, several studies (\cite{defazio2020understanding,liu2020improved,guo2021novel}) highlight that momentum can accelerate convergence by canceling out some impact of stochastic gradient noise. Intuitively, momentum reduces variance in the gradient estimates, effectively reducing the noise that often dominates convergence behavior, particularly in high-noise settings typical of neural network training. Motivated by these observations, we also incorporate momentum into our proposed Bregman subgradient method (MSBG), leveraging its variance-reducing benefits to potentially enhance convergence performance in practice. }

For a chosen kernel function $\varphi: \mathbb{R}^n \rightarrow \mathbb{R}$, the momentum based update scheme is given as follows:
\begin{equation}
    \label{Eq:MSBG_update}
    \tag{MSBG}
    \left\{\begin{aligned}
        x_{k+1}&\approx\argmin_{x\in\R^n}\left\{\inner{m_k,x-x_k}+\frac{1}{\eta_k}\cD_{\varphi}(x,x_k)\right\}\\
        \text{s.t. }&{\text{ $x_{k+1}$ is a $\nu_k$-optimal solution of the subproblem.} }\\
        m_{k+1}&=m_k-\theta_kP(x_k)(m_k-g_k),
    \end{aligned}\right.
\end{equation}
where $g_k = d_k + \xi_k$, $d_k \in D^{\delta_k}_f(x_k)$, and $P(x_k)\in\R^{n\times n}$ denotes a preconditioning matrix. Similar to \eqref{Eq:iSBG}, the MSBG subproblem can also be solved in an inexact manner by adopting a preconditioned subgradient strategy as shown in \eqref{Eq:iSBG}. For the ease of presentation, we omit the discussion. 
{Our MSBG method is a 
single timescale method in the sense} that the stepsize $\eta_k$ for the primal variable and the stepsize $\theta_k$ for the momentum decay at the same rate.

We make the following assumptions on \eqref{Eq:MSBG_update}.

\begin{assumption}
\label{assumption_single}
\begin{enumerate}
    \item {$\varphi\in\cC^2(\R^n)$ is a supercoecive Legendre kernel function over $\R^n$, and $\nabla^2\varphi(\cdot)$ is positive definite everywhere. Moreover, $P(\cdot)=(\nabla^2\varphi(\cdot))^{-1}:\R^n\rightarrow\R^{n\times n}$.}
    \item The sequences $\{x_k\}$, $\{\nabla\phi(x_k)\}$, $\{d_k\}$, and $\{m_k\}$ are uniformly bounded almost surely.
   \item $\{\xi_k\}$ is a martingale difference noise, $\sum_{k=0}^\infty\eta_k=\infty$, and the stepsize and noise satisfy one of the following two conditions:
        \begin{enumerate}
            \item $\{\xi_k\}$ is uniformly bounded and $\eta_k=o\left(\frac{1}{\log k}\right)$.
            \item $\{\xi_k\}$ has bounded variance, i.e. $\E[\norm{\xi_{k+1}}^2|\cF_k]\leq\sigma^2<\infty$, and $\sum_{k=0}^\infty\eta_k^2<\infty$.
        \end{enumerate} 
    \item  $\lim_{k\rightarrow\infty}\delta_k=0$ and $\lim_{k\rightarrow\infty}\nu_k=0$.
    \item There exists a positive $\tau$ such that $\lim_{k\rightarrow\infty}\frac{\theta_k}{\eta_k}=\tau$.
\end{enumerate}
\end{assumption}
{
We make some remarks on Assumption \ref{assumption_single}. Assumption \ref{assumption_single}(1) and (3) are directly adapted from Assumption \ref{Assumption: DI}(1) and (4), respectively. Assumption \ref{assumption_single}(4) characterizes how $d_k$ approximates $D_f(x_k)$ and quantifies the inexactness in solving the subproblem. 
We note that Assumption \ref{assumption_single}(2) can be ensured by the global stability when the noise sequence $\{\xi_k\}$ corresponds to randomly reshuffling noises, as demonstrated by Theorem \ref{The_convergence_Nondiminishing_RR}. Assumption \ref{assumption_single}(5) assumes that the stepsizes $\{\eta_k\}$ and momentum parameters $\{\theta_k\}$ in the framework \eqref{alg:MSBPG} are single-timescale, i.e. they decrease at the same rate. In practice, $\tau$ can be preset.}

% To ensure the convergence of \eqref{alg:MSBPG}, which is essentially a specific case of Assumption \ref{assumption_Sard_Lyapunov}.
% \begin{assumption}
% \label{assumption_Sard_momentum}
% \begin{enumerate}
% \item $f$ is lower bounded, path-differential, and admits a convex-valued conservative field $D_f$.
% \item The critical value set $\{f(x):0\in D_f(x)\}$ has empty interior in $\R$.
% \end{enumerate}
% \end{assumption}
Consider the following differential inclusion,
\begin{equation}
    \begin{aligned}
        \frac{d}{dt}\left[\begin{array}{c}
            \nabla\varphi(x(t)) \\
             m(t)
             \end{array}\right]\in-\left[\begin{array}{c}
                m(t) \\
                \tau(\nabla^2\varphi(x(t)))^{-1}(m(t)-D_f(x(t)))
             \end{array}\right],\text{ for almost all $t\geq0$.}
    \end{aligned}
    \label{Eq_ode_single_timescale}
\end{equation}
Define
\[
\phi(x,m):=\varphi(x)+\frac{1}{2}\|m\|^2,\;\quad\mathcal{H}(x,m):=\left[\begin{array}{c}
                m \\
                \tau(\nabla^2\varphi(x))^{-1}(m-D_f(x) )
             \end{array}\right].
\]
Then, \eqref{Eq_ode_single_timescale} can be reformulated in the form of \eqref{Eq:Breg_DI} as:
\begin{equation}
    \label{Eq:Breg_DI_single_time}
    \frac{d}{dt}\nabla\phi(x(t),m(t))\in-\mathcal{H}(x(t),m(t)),\text{ for almost all $t\geq0$.}
\end{equation}
{The stable set of \eqref{Eq:Breg_DI_single_time} is given by $\cH^{-1}(0)=\{(x,m):\;m=0,0\in D_f(x)\}$. Based on the differential inclusion \eqref{Eq:Breg_DI_single_time}, \eqref{Eq:MSBG_update} can be reformulated in the form of \eqref{Eq:Breg_general_iterative} as:}
\begin{equation}
\left\{\begin{aligned}
    \nabla\varphi(x_{k+1})&=\nabla\varphi(x_k)-\eta_k(m_k+d^e_k)\\
    m_{k+1}&=m_k-\eta_k\cdot\frac{\theta_k}{\eta_k}(\nabla^2\varphi(x_k))^{-1}(m_k-d_k),
\end{aligned}\right.
    \label{Eq__discrete_single_timescale}
\end{equation}
where $\norm{d^e_k}\leq\nu_k$.

\begin{lemma}
    \label{le:asymp_gradient}
    Suppose Assumptions \ref{assumption_single} and \ref{assumption:Lyapunov_BGD} hold. Let $\{(x_k,m_k)\}$ be the sequence generated by \eqref{Eq:MSBG_update}, $d_{x,k}:=m_k+d^e_k$, and $d_{m,k}:=\frac{\theta_k}{\eta_k}(\nabla^2\varphi(x_k))^{-1}(m_k-d_k)$, where $\norm{d^e_k}\leq\nu_k$. For any increasing sequence $\{k_j\}$ such that $(x_{k_j},m_{k_j})$ converges to $(\bar x,\bar m)$, it holds that 
    \[
    \lim_{N\rightarrow\infty}\dist\left(\frac{1}{N}\sum_{j=1}^N(d_{x,k_j},d_{m,k_j}),\mathcal{H}(\bar x,\bar m)\right)=0.
    \]
\end{lemma}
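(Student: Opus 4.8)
The plan is to exploit the product structure of $\mathcal{H}$ and treat the two coordinates $d_{x,k_j}$ and $d_{m,k_j}$ separately. Writing $A(\bar x,\bar m):=\tau(\nabla^2\varphi(\bar x))^{-1}(\bar m - D_f(\bar x))$ for the second block of $\mathcal{H}(\bar x,\bar m)$, we have $\mathcal{H}(\bar x,\bar m)=\{\bar m\}\times A(\bar x,\bar m)$, and the Euclidean distance to a product set splits as $\dist((a,b),\{\bar m\}\times A)^2=\|a-\bar m\|^2+\dist(b,A)^2$. Hence it suffices to show that $\frac{1}{N}\sum_{j=1}^N d_{x,k_j}\to\bar m$ and that $\dist\big(\frac{1}{N}\sum_{j=1}^N d_{m,k_j},A(\bar x,\bar m)\big)\to 0$.

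The first coordinate is immediate: since $d_{x,k_j}=m_{k_j}+d^e_{k_j}$ with $\|d^e_{k_j}\|\le\nu_{k_j}$, the convergence $m_{k_j}\to\bar m$ gives $\frac{1}{N}\sum_{j=1}^N m_{k_j}\to\bar m$, while $\nu_k\to 0$ (Assumption \ref{assumption_single}(4)) yields $\frac{1}{N}\sum_{j=1}^N d^e_{k_j}\to 0$; thus the first block average converges to $\bar m$.

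For the second coordinate, the main work is a pointwise estimate $\dist(d_{m,k_j},A(\bar x,\bar m))\to 0$, after which convexity closes the argument. First, $A(\bar x,\bar m)$ is convex and compact: $D_f(\bar x)$ is convex-valued by Assumption \ref{assumption:Lyapunov_BGD}(1) and compact because $D_f$ is locally bounded with closed graph, and $A(\bar x,\bar m)$ is its image under the affine map $u\mapsto\tau(\nabla^2\varphi(\bar x))^{-1}(\bar m-u)$. Let $p_{k_j}:=\mathrm{proj}_{D_f(\bar x)}(d_{k_j})$; since $d_{k_j}\in D^{\delta_{k_j}}_f(x_{k_j})$ with $x_{k_j}\to\bar x$, $\delta_{k_j}\to 0$, and $D_f$ has a closed graph, outer semicontinuity gives $\|d_{k_j}-p_{k_j}\|=\dist(d_{k_j},D_f(\bar x))\to 0$. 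Setting $\tilde a_{k_j}:=\tau(\nabla^2\varphi(\bar x))^{-1}(\bar m-p_{k_j})\in A(\bar x,\bar m)$, I would bound $\|d_{m,k_j}-\tilde a_{k_j}\|$ by a telescoping triangle inequality that isolates four vanishing factors: $\frac{\theta_{k_j}}{\eta_{k_j}}-\tau\to 0$ (Assumption \ref{assumption_single}(5)); $(\nabla^2\varphi(x_{k_j}))^{-1}-(\nabla^2\varphi(\bar x))^{-1}\to 0$, using that $\nabla^2\varphi$ is continuous and positive definite everywhere (Assumption \ref{assumption_single}(1)), so on the compact range of $\{x_{k_j}\}$ its eigenvalues are bounded below and matrix inversion is continuous; $m_{k_j}-\bar m\to 0$; and $d_{k_j}-p_{k_j}\to 0$. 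All the remaining factors are uniformly bounded by Assumption \ref{assumption_single}(2) together with local boundedness of $D_f$ and continuity of $(\nabla^2\varphi)^{-1}$, so the product tends to $0$ and $\dist(d_{m,k_j},A(\bar x,\bar m))\le\|d_{m,k_j}-\tilde a_{k_j}\|\to 0$.

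Finally, exactly as in the Jensen step of Lemma \ref{le:SBGD}, the convexity of $A(\bar x,\bar m)$ makes $b\mapsto\dist(b,A(\bar x,\bar m))$ convex, so $\dist\big(\frac{1}{N}\sum_{j=1}^N d_{m,k_j},A(\bar x,\bar m)\big)\le\frac{1}{N}\sum_{j=1}^N\dist(d_{m,k_j},A(\bar x,\bar m))\to 0$, the right-hand side being the Cesàro mean of a null sequence. Combining the two coordinates gives the claimed distance convergence. I expect the main obstacle to be the second-coordinate estimate: the target set $A(\bar x,\bar m)$ depends on $\bar x$ only through the merely continuous (not constant) inverse Hessian, so one cannot simply pull $d_{k_j}$ into the set, and the device of projecting onto $D_f(\bar x)$ to produce $\tilde a_{k_j}$ that lands exactly in the convex set before averaging is what makes the Jensen argument applicable.
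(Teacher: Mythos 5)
Your proof is correct and takes essentially the same route as the paper's: pointwise convergence of $\dist\left((d_{x,k_j},d_{m,k_j}),\mathcal{H}(\bar x,\bar m)\right)$ to zero via continuity of $(\nabla^2\varphi(\cdot))^{-1}$, the ratio $\theta_{k_j}/\eta_{k_j}\to\tau$, and the closed graph plus local boundedness of $D_f$, followed by Jensen's inequality exploiting the convexity of $D_f(\bar x)$. Your explicit projection $p_{k_j}=\mathrm{proj}_{D_f(\bar x)}(d_{k_j})$ and the coordinate-wise splitting of the distance to the product set $\{\bar m\}\times A(\bar x,\bar m)$ simply spell out details the paper's shorter argument leaves implicit.
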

\begin{proof}
By Assumption \ref{assumption_single}, it holds that
\[
\lim_{j\rightarrow\infty}\dist\left(m_{k_j}+d^e_{k_j},\bar m\right)\leq\lim_{j\rightarrow\infty}\dist(m_{k_j},\bar m)+\nu_{k_j}=0.
\]
{By Assumption \ref{assumption_single}.1, we have $(\nabla^2\varphi(x))^{-1}$ is continuous, and hence
\[
\lim_{j\rightarrow\infty}\dist\left((\nabla^2\varphi(x_{k_j}))^{-1}(m_{k_j}-d_{k_j}),(\nabla^2\varphi(\bar x))^{-1}(\bar m-D_f(\bar x))\right)=0.
\]}
Since $D_f(\bar x)$ is a compact set, and $\lim_{k\rightarrow\infty}\frac{\theta_k}{\eta_k}=\tau$, it holds that
\[
\lim_{j\rightarrow\infty}\dist\left(\frac{\theta_{k_j}}{\eta_{k_j}}(\nabla^2\varphi(x_{k_j}))^{-1}(m_{k_j}-d_{k_j}),\tau(\nabla^2\varphi(\bar x))^{-1}(\bar m-D_f(\bar x))\right)=0.  
\]
By the fact that $D_f(\bar x)$ is a convex set and Jensen's inequality, we have that 
\[
 \lim_{N\rightarrow\infty}\dist\left(\frac{1}{N}\sum_{j=1}^N(d_{x,k_j},d_{m,k_j}),\mathcal{H}(\bar m,\bar x)\right)\leq\lim_{N\rightarrow\infty}\frac{1}{N}\sum_{j=1}^N\dist\left((d_{x,k_j},d_{m,k_j}),\mathcal{H}(\bar m,\bar x)\right)=0.
\]
This completes the proof.
\end{proof}
\begin{proposition}
\label{prop:MSBG_Lyapunov}
    Suppose Assumptions \ref{assumption_single} and \ref{assumption:Lyapunov_BGD} hold. Then $h(x,m)=f(x)+\frac{1}{2\tau}\|m\|^2$ is a Lyapunov function for \eqref{Eq_ode_single_timescale} with the stable set $\cB:=\{(x,m)\in\R^n\times\R^n:\;m=0,0\in D_f(x)\}$.
\end{proposition}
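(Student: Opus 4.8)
The plan is to verify the two defining properties of a Lyapunov function from Assumption \ref{assumption_Sard_Lyapunov}.2 for $h(x,m) = f(x) + \frac{1}{2\tau}\|m\|^2$. Lower boundedness is immediate: since $f$ is lower bounded by Assumption \ref{assumption:Lyapunov_BGD}.1 and $\frac{1}{2\tau}\|m\|^2 \geq 0$, we have $\inf_{(x,m)} h \geq \inf_x f > -\infty$. The substance of the argument is the monotone descent of $h$ along trajectories of \eqref{Eq_ode_single_timescale}, together with a strict decrease whenever the initial point lies outside the stable set $\cB$.

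For the descent property, I would first extract the dynamics of a trajectory $(x(\cdot),m(\cdot))$. Since $\varphi \in \cC^2$ and $x(\cdot)$ is absolutely continuous, the chain rule gives $\frac{d}{dt}\nabla\varphi(x(t)) = \nabla^2\varphi(x(t))\dot{x}(t)$ for almost all $t$; comparing with the first component of \eqref{Eq_ode_single_timescale} yields $\dot{x}(t) = -(\nabla^2\varphi(x(t)))^{-1}m(t)$, using that $\nabla^2\varphi$ is positive definite and hence invertible. The momentum inclusion provides a measurable selection $v(t) \in D_f(x(t))$ with $\dot{m}(t) = -\tau(\nabla^2\varphi(x(t)))^{-1}(m(t)-v(t))$. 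Writing $H(t) := \nabla^2\varphi(x(t))$, I would then apply the conservative-field chain rule of Definition \ref{conservative-field-def} to $f$ for this \emph{same} selection $v(t)$ — legitimate precisely because the identity $\frac{d}{dt}f(x(t)) = \inprod{w}{\dot{x}(t)}$ holds simultaneously for every $w \in D_f(x(t))$ — to obtain $\frac{d}{dt}f(x(t)) = -\inprod{v(t)}{H(t)^{-1}m(t)}$, while $\frac{d}{dt}\frac{1}{2\tau}\|m(t)\|^2 = \frac{1}{\tau}\inprod{m(t)}{\dot{m}(t)} = -\inprod{m(t)}{H(t)^{-1}(m(t)-v(t))}$.

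The decisive observation is that, since $H(t)^{-1}$ is symmetric, the cross terms $\inprod{v(t)}{H(t)^{-1}m(t)}$ and $\inprod{m(t)}{H(t)^{-1}v(t)}$ coincide and cancel upon summing, leaving $\frac{d}{dt}h(x(t),m(t)) = -\inprod{m(t)}{H(t)^{-1}m(t)} \leq 0$ for almost all $t$. Integrating shows $h(x(\cdot),m(\cdot))$ is non-increasing, so $\sup_{t\in[0,T]}h(x(t),m(t)) = h(x(0),m(0))$ for every $T$, giving the second inequality in Assumption \ref{assumption_Sard_Lyapunov}.2 at once. For the strict inequality I would argue by contradiction as in Proposition \ref{prop:Lyapunov_BGD}: if $h(x(t),m(t)) \equiv h(x(0),m(0))$ on $[0,\infty)$, then $\inprod{m(t)}{H(t)^{-1}m(t)} = 0$ almost everywhere, and positive definiteness of $H(t)^{-1}$ forces $m(t)=0$ a.e., hence $m(\cdot)\equiv 0$ by continuity. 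Then $\dot{x}(t) = -H(t)^{-1}m(t) = 0$ gives $x(\cdot)\equiv x(0)$, and substituting $m\equiv 0$ into the momentum equation yields $0 = \tau H(t)^{-1}v(t)$, so $v(t) = 0 \in D_f(x(0))$. Thus $(x(0),m(0)) \in \cB$, contradicting $(x(0),m(0))\notin\cB$; therefore some $T>0$ with $h(x(T),m(T)) < h(x(0),m(0))$ must exist.

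I expect the main subtlety to be the bookkeeping around the conservative field rather than any hard estimate: one must invoke the chain rule of Definition \ref{conservative-field-def} for the specific selection $v(t)$ driving the momentum update, and then exploit the symmetry of $(\nabla^2\varphi)^{-1}$ to produce the cancellation that collapses $\frac{d}{dt}h$ into a single negative-semidefinite quadratic form. Everything else — lower boundedness, monotonicity, and the contradiction argument — parallels the proof of Proposition \ref{prop:Lyapunov_BGD} and is routine once this cancellation is established.
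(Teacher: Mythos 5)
Your proposal is correct and takes essentially the same route as the paper's proof: the same chain-rule computation along the trajectory, with the symmetry of $(\nabla^2\varphi(x))^{-1}$ producing the cancellation that yields $\frac{d}{dt}h(x(t),m(t))=-\langle m(t),(\nabla^2\varphi(x(t)))^{-1}m(t)\rangle\leq 0$, followed by a contradiction argument for the strict decrease outside $\cB$. The only (harmless) difference is in the endgame: the paper splits into the cases $m(0)\neq 0$ (direct integral bound) and $m(0)=0$ with $0\notin D_f(x(0))$ (invoking outer semicontinuity of $D_f$ along the moving trajectory), whereas your unified contradiction — using $\dot{x}=-(\nabla^2\varphi(x))^{-1}m\equiv 0$ to conclude $x\equiv x(0)$ and hence $0\in D_f(x(0))$ directly — handles both cases at once and dispenses with the outer-semicontinuity step.
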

\begin{proof}
Consider any trajectory $(x(t),m(t))$ for the differential inclusion \eqref{Eq_ode_single_timescale} with $(x(0),m(0))\notin\cB$. There exists measurable $d_f(s)\in D_f(x(s))$, such that for almost all $s\geq0$,
    \[
    \begin{aligned}
        &\frac{d}{ds}h(x(s),m(s))\\
        =&\inner{D_f(x(s)),\dot x(s)}+\inner{\frac{m(s)}{\tau},\dot m(s)}\\
        \ni&-\inner{d_f(s),(\nabla^2\varphi(x(s)))^{-1}m(s)}-\inner{m(s),(\nabla^2\varphi(x))^{-1}(m(s)-d_f(s))}\\
        =&-\inner{m(s),(\nabla^2\varphi(x(s)))^{-1}m(s)}.
    \end{aligned}
    \]
    Thus, for any $t\geq0$, $h(x(t), m(t)) \leq h(x(0), m(0))$. For any $(x(0),m(0))\notin\cB$, either $m(0) \neq 0$ or $m(0)=0$ and $0 \notin D_f(x(0))$. If $m(0) \neq 0$, then the continuity of $m(\cdot)$ ensures the existence of $T > 0$ and $\alpha > 0$ where $\|m(s)\| \geq \alpha$ for $s \in [0, T]$. Thus we have 
    \[
    h(x(T),m(T))-h(x(0),m(0))\leq-\int_{0}^T\inner{m(s),(\nabla^2\varphi(x(s)))^{-1}m(s)}ds<0.
    \]
    Now consider the case $m(0)=0$ and $0\notin D_f(x(0))$. By the outer semicontinuity of $D_f$ and Assumption \ref{assumption_single}.1, there exists $\tilde T>0$, such that for almost all $t\in[0,\tilde T]$, it holds that $0\notin(\nabla^2\varphi(x(t)))^{-1}D_f(x(t))$. Now suppose for {all} $t\geq0$, $h(x(t),m(t))=h(x(0),m(0))$, then we have $m(s)=0$ for almost all $s\geq0$. Since $m$ is continuous, it holds that $m\equiv0$.  Note that for almost any $t\geq0$, $\dot m(t)\in-\tau(\nabla^2\varphi(x(t)))^{-1}(m(t)-D_f(x(t)))$, thus $0\in(\nabla^2\varphi(x(t)))^{-1}D_f(x(t))$ holds for almost all $t\geq0$, which leads to a contradiction. Therefore, for both cases, there exists $T>0$ such that $h(x(T),m(T))<h(x(0),m(0))$. This completes the proof.  
\end{proof}

By Lemma \ref{le:asymp_gradient}, Proposition \ref{prop:MSBG_Lyapunov}, and Theorem \ref{convergence-thm-func-val}, we have the following convergence results for \eqref{Eq:MSBG_update}.
\begin{theorem}
Suppose Assumptions \ref{assumption_single} and \ref{assumption:Lyapunov_BGD} hold. {Then almost surely,} any cluster point of $\{x_k\}$ generated by \eqref{Eq:MSBG_update} is a $D_f$-stationary point of $f$, $\lim_{k\rightarrow\infty}m_k=0$, and the function values $\{f(x_k)\}$ converge.
\end{theorem}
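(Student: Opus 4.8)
The plan is to recognize the MSBG iteration \eqref{Eq:MSBG_update} as a concrete instance of the general Bregman-type scheme \eqref{Eq:Breg_general_iterative} on the augmented variable $(x,m)$, with kernel $\phi(x,m)=\varphi(x)+\frac12\|m\|^2$ and the set-valued mapping $\mathcal{H}$ defined above, and then to invoke Theorem \ref{convergence-thm-func-val} directly. The reformulation is already recorded in \eqref{Eq__discrete_single_timescale}: since $\nabla\phi(x,m)=(\nabla\varphi(x),m)$, the two blocks $\nabla\varphi(x_{k+1})=\nabla\varphi(x_k)-\eta_k(m_k+d^e_k)$ and $m_{k+1}=m_k-\eta_k d_{m,k}$ with $d_{m,k}=\frac{\theta_k}{\eta_k}(\nabla^2\varphi(x_k))^{-1}(m_k-d_k)$ are exactly \eqref{Eq:Breg_general_iterative} with the common stepsize $\eta_k$, evaluation $(d_{x,k},d_{m,k})$, and an augmented noise vector $\xi^{\mathrm{aug}}_k=(0,-\frac{\theta_k}{\eta_k}(\nabla^2\varphi(x_k))^{-1}\xi_k)$.

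First I would verify Assumption \ref{Assumption: DI} for this augmented system. Part (1) holds because adding the smooth strongly convex term $\frac12\|m\|^2$ to the supercoercive $\mathcal{C}^2$ Legendre kernel $\varphi$ preserves supercoercivity, the Legendre property, and $\mathcal{C}^2$ regularity for $\phi$; parts (2) and (3) are Assumption \ref{assumption_single}(2) and (3). For the noise condition (4), I would note that the augmented noise is $\xi^{\mathrm{aug}}_k$, obtained from the martingale difference $\{\xi_k\}$ by the predictable, uniformly bounded scaling $\frac{\theta_k}{\eta_k}(\nabla^2\varphi(x_k))^{-1}$ (bounded since $\frac{\theta_k}{\eta_k}\to\tau$ and $x_k$ ranges over a bounded set on which $(\nabla^2\varphi)^{-1}$ is continuous); hence $\{\xi^{\mathrm{aug}}_k\}$ remains a uniformly bounded or variance-bounded martingale difference sequence, and \eqref{Eq:noise_cond} follows almost surely by Remark \ref{rmk:assumption_DI}(3) under either regime of Assumption \ref{assumption_single}(3). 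Part (5), the crucial approximation property, is precisely Lemma \ref{le:asymp_gradient}.

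Next I would verify Assumption \ref{assumption_Sard_Lyapunov} with $\Psi=h$, where $h(x,m)=f(x)+\frac{1}{2\tau}\|m\|^2$. The strict descent property \ref{assumption_Sard_Lyapunov}(2) is established in Proposition \ref{prop:MSBG_Lyapunov}, and $h$ is lower bounded because $f$ is lower bounded by Assumption \ref{assumption:Lyapunov_BGD}(1). For the weak Morse--Sard condition \ref{assumption_Sard_Lyapunov}(1), I would observe that on the stable set $\cB=\mathcal{H}^{-1}(0)=\{(x,m):m=0,\,0\in D_f(x)\}$ one has $h(x,m)=f(x)$, so the value set $\{h(x,m):0\in\mathcal{H}(x,m)\}$ coincides with $\{f(x):0\in D_f(x)\}$, which has empty interior in $\R$ by Assumption \ref{assumption:Lyapunov_BGD}(2).

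With both assumptions verified, Theorem \ref{convergence-thm-func-val} gives that every cluster point of $\{(x_k,m_k)\}$ lies in $\cB$ and that $\{h(x_k,m_k)\}$ converges, from which the three conclusions follow. Any cluster point of $\{x_k\}$ is the $x$-component of a joint cluster point in $\cB$, hence satisfies $0\in D_f(x)$ and is $D_f$-stationary. Since $\{m_k\}$ is bounded and, by the same joint-subsequence argument, all its cluster points equal $0$, we obtain $\lim_{k\to\infty}m_k=0$; consequently $f(x_k)=h(x_k,m_k)-\frac{1}{2\tau}\|m_k\|^2$ converges. The main obstacle is entirely absorbed into the two preceding results---Lemma \ref{le:asymp_gradient} for the set-valued approximation and Proposition \ref{prop:MSBG_Lyapunov} for the strict Lyapunov descent---so at the level of this theorem the only genuine care needed is checking that the weak Morse--Sard condition for $h$ collapses to the one-dimensional condition on $f$, and that convergence of all cluster points of the bounded sequence $\{m_k\}$ to a single value upgrades to convergence of the whole sequence.
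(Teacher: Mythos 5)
Your proposal is correct and follows essentially the same route as the paper: instantiate the general framework on the augmented pair $(x,m)$ with kernel $\phi(x,m)=\varphi(x)+\frac{1}{2}\|m\|^2$, invoke Lemma \ref{le:asymp_gradient} for Assumption \ref{Assumption: DI}(5) and Proposition \ref{prop:MSBG_Lyapunov} for the Lyapunov property, apply Theorem \ref{convergence-thm-func-val}, and then use the joint-subsequence argument on the bounded $\{m_k\}$ to get $m_k\to 0$ and convergence of $\{f(x_k)\}$. The extra checks you spell out---that the scaled augmented noise $\bigl(0,-\tfrac{\theta_k}{\eta_k}(\nabla^2\varphi(x_k))^{-1}\xi_k\bigr)$ remains a suitably bounded martingale difference sequence, and that the weak Morse--Sard set $\{h(x,m):0\in\cH(x,m)\}$ collapses to $\{f(x):0\in D_f(x)\}$ on the stable set where $m=0$---are exactly the details the paper leaves implicit, and you verify them correctly.
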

\begin{proof}
Theorem \ref{convergence-thm-func-val} implies that any cluster point of $\{(x_k,m_k)\}$ lies in $\{(x,m):0\in D_f(x),m=0\}$, and $\{f(x_k)+\frac{1}{2\tau}\norm{m_k}^2\}$ converges. For any convergent subsequence $x_{k_j}\rightarrow\bar x$, since $\{m_k\}$ is bounded, then there exist subsequence $\{m_{k_{j_i}}\}$ such that $m_{k_{j_i}}\rightarrow\bar m$. Therefore, $(x_{k_{j_i}},m_{k_{j_i}})\rightarrow(\bar x,\bar m)$. Then, it holds that $0\in D_f(\bar x)$. Similarly, we can prove that for any convergent subsequence $\{m_{k_j}\}$ such that $m_{k_j}\rightarrow\bar m$, we have that $\bar m=0$. Therefore, $\lim_{k\rightarrow\infty}m_k=0$. Then, $\lim_{k\rightarrow\infty}f(x_k)=\lim_{k\rightarrow\infty}f(x_k)+\frac{1}{2\tau}\norm{m_k}^2$. This completes the proof.
\end{proof}

\subsection{Stochastic Bregman proximal subgradient method}
In this section, we consider solving the following constrained composite optimization problem:
\begin{equation}
    \label{Eq_constrained}
    \min_{x\in\cX}\;h(x):=f(x)+R(x),
\end{equation}
where $\mathcal{X}$ is a closed subset of $\mathbb{R}^n$, and $R$ is a locally Lipschitz function with an efficiently computable conservative field. In many applications, $R$ serves as the regularization function, which is usually Clarke regular, and $\partial R$ is efficient to compute.  We consider applying the follow Bregman proximal subgradient method to solve \eqref{Eq_constrained},
\begin{equation}
    \label{Eq:SBPG}
    \tag{SBPG}
    \left\{\begin{aligned}
    x_{k+1}&\approx\argmin_{x\in\cX}\left\{\inner{g_k,x-x_k}+\frac{1}{\eta_k}\cD_\phi(x,x_k)+R(x)\right\},\\
    \text{s.t. }&\inner{g_k,x_{k+1}-x_k}+\frac{1}{\eta_k}\cD_\phi(x_{k+1},x_k)+R(x_{k+1})\leq R(x_k),\text{ and}\\
    &\dist\left(0,g_k+\frac{1}{\eta_k}(\nabla\phi(x_{k+1})-\nabla\phi(x_k))+D_R(x_{k+1})+N^L_{\cX}(x_{k+1})\right)\leq\nu_k.
    \end{aligned}\right.
\end{equation}
where $g_k=d_{f,k}+\xi_{k}$, $d_{f,k}\in D^{\delta_k}_f(x_{k})$. We can reformulate \eqref{Eq:SBPG} in the form of \eqref{Eq:Breg_general_iterative} as follows:
\[
\begin{aligned}
\nabla\phi(x_{k+1})=\nabla\phi(x_k)-\eta_k(d_{f,k}+d_{R,k}+d_{\cX,k}+d_{e,k}+\xi_k),
\end{aligned}
\]
where $d_{f,k}\in D^{\delta_k}_f(x_k)$, $d_{R,k}\in D_R(x_{k+1})$, $d_{\cX,k}\in N^L_{\cX}(x_{k+1})$, and $\norm{d_{e,k}}\leq\nu_k$. When $\cX = \R^n$, with $\delta_k = 0$ and $\nu_k = 0$, it follows that $x_{k+1} = {\rm T}^{\phi}_{\eta_k,R}(x_k, g_k)$. Let $d_k:=d_{f,k}+d_{R,k}+d_{\cX,k}+d_{e,k}$. We can easily verify that there exists $\{\tilde\delta_k\}$ such that $\lim_{k\rightarrow\infty}\tilde\delta_k=0$, and $d_k\in\cH^{\tilde\delta_k}(x_k)$, where $\cH:=D_f+D_R+N_{\cX}$. This leads to a differential inclusion for the proximal updates given by
\begin{equation}
    \frac{d}{dt}\nabla\phi(x(t))\in-\cH(x(t)),\;\text{where }\cH=D_f+D_R+N_{\cX}.
    \label{Eq:DI_prox}
\end{equation}

The momentum technique can also be integrated into \eqref{Eq:SBPG}, as illustrated in Section 4.2. For the sake of readability, we omit this extension. Note that neither $N_\cX$ nor $N^L_{\cX}$ is locally bounded, thus the results that rely on local boundedness assumption such as those presented in \cite{xiao2023convergence,ding2023adam} cannot be directly applied. We make the following assumptions on \eqref{Eq:SBPG}.
\begin{assumption}
    \label{assumption_prox_extension}
    \begin{enumerate}
        \item $\phi$ is a supercoecive Legendre kernel function over $\R^n$. Moreover, $\phi$ is locally strongly convex and $\nabla\phi$ is locally Lipschitz continuous. Additionally, for any absolutely continuous mapping $z(\cdot)\in\cC(\R_+,\R^n)$, $\nabla^2\phi(z(s))$ is positive definite for almost all $s\geq0$. 
        \item The sequences $\{x_k\}$, $\{\nabla\phi(x_k)\}$, $\{d_{f,k}\}$ and $\{d_{R,k}\}$ are uniformly bounded almost surely.
        \item $\{\xi_k\}$ is a uniformly bounded martingale difference noise and $\sup_{k\geq0}\norm{\xi_k}<\infty$. The stepsize sequence $\{\eta_k\}$ satisfies  $\sum_{k=0}^\infty\eta_k=\infty$ and $\eta_k=o\left(\frac{1}{\log k}\right)$. 
        \item $\lim_{k\rightarrow\infty}\delta_k=0$, and $\lim_{k\rightarrow\infty}\nu_k=0$.
        \item For any $\eta>0$, $\eta R+\phi$ is supercoecive.
    \end{enumerate}
\end{assumption}
{
Now, we make some remarks on Assumption \ref{assumption_prox_extension}. Assumption \ref{assumption_prox_extension}(1) and (3) are directly adapted from Assumption \ref{Assumption: DI}(1) and (4), respectively. Assumption \ref{assumption:SBGD}(4) characterizes how $d_k$ approximates $D_f(x_k)$ and quantifies the inexactness in solving the subproblem. Assumption \ref{assumption:SBGD}(2) can be ensured by the global stability when the noise sequence $\{\xi_k\}$ corresponds to randomly reshuffling noises, as demonstrated by Theorem \ref{The_convergence_Nondiminishing_RR}. Assumption \ref{assumption_prox_extension}(5) ensures the well-posedness of Bregman proximal mapping as demonstrated in Lemma \ref{well-def-prox-mapping}.}

To ensure the convergence of \eqref{Eq:SBPG}, we make the following assumptions on $f$ and kernel $\phi$, which is essentially a specific case of Assumption \ref{assumption_Sard_Lyapunov}.

\begin{assumption}
    \label{assumption_Sard_prox}
    \begin{enumerate}
    \item $h$ is lower bounded. Moreover, the locally Lipschitz continuous functions $f$ and $R$ are potential functions that admit convex-valued $D_f$ and $D_R$ as their conservative fields, respectively. $\cX$ admits the chain rule as described in Definition \ref{def:chain_rule_set}.
    \item The critical value set $\{h(x):0\in D_f(x)+D_R(x)+N_{\cX}(x)\}$ has empty interior in $\R$. 
    \end{enumerate}
\end{assumption}

\begin{lemma}
    \label{le:SBPG_approx_map}
    Suppose Assumptions \ref{assumption_prox_extension} and \ref{assumption_Sard_prox} hold. Let $d_k:=d_{f,k}+d_{R,k}+d_{\cX,k}+d_{e,k}$. For any increasing sequence $\{k_j\}$ such that $\{x_{k_j}\}$ converges to $\bar x$, it holds that 
    \[
    \lim_{N\rightarrow\infty}\dist\left(\frac{1}{N}\sum_{j=1}^{N}d_{k_j},\cH(\bar x)\right)=0,
    \]
    where $\cH:=D_f+D_R+N_{\cX}$. 
\end{lemma}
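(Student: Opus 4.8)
The plan is to mirror the structure of the proof of Lemma \ref{le:SBGD}: reduce the averaged statement to the pointwise claim $\dist(d_{k_j},\cH(\bar x))\to0$ along the subsequence, and then invoke convexity of $\cH(\bar x)$ together with Jensen's inequality. The three ``easy'' terms $d_{f,k_j}$, $d_{R,k_j}$, $d_{e,k_j}$ I would handle exactly as before: outer semicontinuity of the conservative fields $D_f$ and $D_R$ (closed graph plus local boundedness) combined with $\delta_{k_j}\to0$ gives $\dist(d_{f,k_j},D_f(\bar x))\to0$ and $\dist(d_{R,k_j},D_R(\bar x))\to0$ once we know that both $x_{k_j}$ and $x_{k_j+1}$ converge to $\bar x$, while $\norm{d_{e,k_j}}\leq\nu_{k_j}\to0$ is immediate.

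The hard part will be that $N^L_\cX$ and $N_\cX$ are \emph{not} locally bounded, so a priori neither $x_{k_j+1}\to\bar x$ nor boundedness of $d_{\cX,k_j}$ is available; this is precisely why the results relying on local boundedness cannot be invoked directly. The device I would use to overcome this is the descent inequality built into the SBPG subproblem. First, since all iterates lie in a fixed compact set (Assumption \ref{assumption_prox_extension}.2), $\phi$ is strongly convex with some modulus $\mu$ there, so $\cD_\phi(x_{k+1},x_k)\geq\frac{\mu}{2}\norm{x_{k+1}-x_k}^2$; substituting this into the descent condition and bounding its right-hand side by the local Lipschitzness of $R$ and the uniform boundedness of $g_k=d_{f,k}+\xi_k$ yields $\norm{x_{k+1}-x_k}\leq\frac{2C\eta_k}{\mu}$ for a suitable constant $C$, whence $x_{k+1}-x_k\to0$ and in particular $x_{k_j+1}\to\bar x$.

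Crucially, the same estimate will also tame the unbounded term $d_{\cX,k}$. Using the local Lipschitzness of $\nabla\phi$, the quantity $\frac{1}{\eta_k}\norm{\nabla\phi(x_{k+1})-\nabla\phi(x_k)}\leq\frac{L_\phi}{\eta_k}\norm{x_{k+1}-x_k}$ is uniformly bounded. Since the $\nu_k$-optimality condition furnishes the identity $d_{\cX,k}=-g_k-\frac{1}{\eta_k}(\nabla\phi(x_{k+1})-\nabla\phi(x_k))-d_{R,k}-d_{e,k}$, with every term on the right uniformly bounded, the sequence $\{d_{\cX,k}\}$ is itself uniformly bounded. I would then argue that any convergent sub-subsequence of $\{d_{\cX,k_j}\}$ has its limit in $N^L_\cX(\bar x)$ by outer semicontinuity of $N^L_\cX$ (\cite[Proposition 6.6]{RockWets98}), hence in $N_\cX(\bar x)=\mathrm{cl\,conv}(N^L_\cX(\bar x))$; boundedness then upgrades this to $\dist(d_{\cX,k_j},N_\cX(\bar x))\to0$.

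Combining the four pieces via subadditivity of the distance-to-Minkowski-sum map $\dist(a+b,A+B)\leq\dist(a,A)+\dist(b,B)$ gives $\dist(d_{k_j},\cH(\bar x))\to0$. Finally, because $D_f(\bar x)$ and $D_R(\bar x)$ are convex (Assumption \ref{assumption_Sard_prox}.1) and $N_\cX(\bar x)$ is convex by construction, $\cH(\bar x)=D_f(\bar x)+D_R(\bar x)+N_\cX(\bar x)$ is convex, so Jensen's inequality bounds $\dist\left(\frac1N\sum_{j=1}^N d_{k_j},\cH(\bar x)\right)$ by the Cesàro average $\frac1N\sum_{j=1}^N\dist(d_{k_j},\cH(\bar x))$, which tends to $0$ since each summand does. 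This completes the proposed argument.
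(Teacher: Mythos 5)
Your proposal is correct and follows essentially the same route as the paper's proof: both use the descent condition in the SBPG subproblem together with local strong convexity of $\phi$ to get $\norm{x_{k+1}-x_k}=O(\eta_k)$, then local Lipschitzness of $\nabla\phi$ plus the $\nu_k$-optimality identity to deduce uniform boundedness of $\{d_{\cX,k}\}$, outer semicontinuity of $D_f$, $D_R$, $N^L_\cX$ with $N^L_\cX\subset N_\cX$ for the pointwise limits, and finally convexity of $\cH(\bar x)$ with Jensen's inequality. Your write-up is in fact slightly more explicit than the paper's on two points it leaves implicit (that $x_{k_j+1}\to\bar x$, needed since $d_{R,k}$ and $d_{\cX,k}$ live at $x_{k+1}$, and the compactness upgrade from subsequential limits to $\dist(d_{\cX,k_j},N_\cX(\bar x))\to0$), but the underlying argument is the same.
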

\begin{proof}
Given the condition 
\[
R(x_{k+1})+\inner{d_{f,k}+\xi_k,x_{k+1}-x_k}+\frac{1}{\eta_k}\cD_\phi(x_{k+1},x_k)\leq R(x_k),
\]
we derive that 
\[
\frac{\cD_\phi(x_{k+1},x_k)}{\norm{x_{k+1}-x_k}}\leq\eta_k\frac{|R(x_{k+1})-R(x_k)|}{\norm{x_{k+1}-x_k}}+\eta_k(\norm{d_{f,k}+\xi_k}).
\]
Assumption \ref{assumption_prox_extension} ensures that $\sup_{k\geq0}\frac{1}{\eta_k}\norm{\nabla\phi(x_{k+1})-\nabla\phi(x_k)}<\infty$. Moreover, we have that for some $\norm{d_{e,k}}\leq\nu_k$, 
\[
\frac{\nabla\phi(x_{k+1})-\nabla\phi(x_k)}{\eta_k}=-\left(d_{f,k}+d_{R,k}+d_{\cX,k}+d_{e,k}+\xi_k\right).
\]
Note that the left hand side is uniformly bounded, $\{d_{f,k}\}$, $\{d_{R,k}\}$, $\{d_{e,k}\}$ and $\{\xi_k\}$ are all uniformly bounded, therefore, it holds that $\{d_{\cX,k}\}$ is also uniformly bounded. For any $\{x_{k_j}\}$ converging to $\bar x$, by the outer semicontinuity of $D_f$, $D_R$ and $N_{\cX}^L$, along with \cite[Theorem 5.7, Proposition 5.12]{RockWets98}, it holds that 
\[
\lim_{j\rightarrow\infty}\dist\left(d_{x,k_j},D_f(\bar x)\right)=0,\;\lim_{j\rightarrow\infty}\dist\left(d_{R,k_j},D_R(\bar x)\right)=0,\;\text{and }\lim_{j\rightarrow\infty}\dist\left(d_{\cX,k_j},N^L_{\cX}(\bar x)\right)=0.
\]
Note that $N^L_{\cX}\subset N_{\cX}$,
therefore, $\lim_{j\rightarrow\infty}\dist\left(d_{k_j},\mathcal{H}(\bar x)\right)=0$. By Jensen's inequality, we prove this lemma. 
\end{proof}

\begin{proposition}
    \label{prop:SBPG_Lyapunov}
     Suppose Assumptions \ref{assumption_prox_extension} and \ref{assumption_Sard_prox} hold. Then the function $h$ in \eqref{Eq_constrained} is a Lyapunov function for the differential inclusion \eqref{Eq:DI_prox} with stable set $\{x\in\R^n:0\in D_f(x)+D_R(x)+N_{\cX}(x)\}$.    
\end{proposition}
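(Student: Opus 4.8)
The plan is to verify that $h$ meets the descent condition of Assumption \ref{assumption_Sard_Lyapunov}.2, adapting the argument of Proposition \ref{prop:Lyapunov_BGD} to account for the additional regularizer and normal cone contributions in $\cH = D_f + D_R + N_\cX$. Lower boundedness of $h$ is supplied directly by Assumption \ref{assumption_Sard_prox}(1); by Lemma \ref{le:chain-rule-conservative}(2), $D_f + D_R$ is a conservative field for $f + R$, so $h$ is path-differentiable. The substantive work is to show strict descent along every nontrivial trajectory.

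First I would fix a trajectory $z(\cdot)$ of \eqref{Eq:DI_prox} with $z(0) \notin \cH^{-1}(0)$. Since $N_\cX(z(t))$ is nonempty only for points of $\cX$, the trajectory necessarily stays in $\cX$, so that all relevant chain rules are applicable. Writing the inclusion in its integrated form and using a measurable selection, I would extract measurable curves $d_f(t) \in D_f(z(t))$, $d_R(t) \in D_R(z(t))$, and $d_\cX(t) \in N_\cX(z(t))$ with $\frac{d}{dt}\nabla\phi(z(t)) = -(d_f(t) + d_R(t) + d_\cX(t))$ for almost all $t$. Because $\nabla\phi$ is locally Lipschitz and $\nabla^2\phi(z(t))$ exists and is positive definite for almost all $t$ along absolutely continuous curves (Assumption \ref{assumption_prox_extension}(1)), the left-hand side equals $\nabla^2\phi(z(t))\dot z(t)$, giving $\nabla^2\phi(z(t))\dot z(t) = -(d_f(t)+d_R(t)+d_\cX(t))$.

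The key computation is to differentiate $h(z(t)) = f(z(t)) + R(z(t))$. The conservative-field chain rule (Definition \ref{conservative-field-def}) yields $\frac{d}{dt}f(z(t)) = \inner{d_f(t),\dot z(t)}$ and $\frac{d}{dt}R(z(t)) = \inner{d_R(t),\dot z(t)}$ for almost all $t$, while the chain rule for the constraint set (Definition \ref{def:chain_rule_set}, guaranteed by Assumption \ref{assumption_Sard_prox}(1)) forces $\inner{d_\cX(t),\dot z(t)} = 0$. Adding this vanishing term and substituting the displayed relation gives $\frac{d}{dt}h(z(t)) = \inner{d_f(t)+d_R(t)+d_\cX(t),\dot z(t)} = -\inner{\nabla^2\phi(z(t))\dot z(t),\dot z(t)} \leq 0$, so $h$ is nonincreasing along $z(\cdot)$ and $\sup_{t\in[0,T]}h(z(t)) = h(z(0))$ for every $T > 0$.

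It remains to exclude the constant case. If $h(z(t)) = h(z(0))$ for all $t \geq 0$, then $\inner{\nabla^2\phi(z(t))\dot z(t),\dot z(t)} = 0$ almost everywhere; positive definiteness of $\nabla^2\phi$ forces $\dot z(t) = 0$ almost everywhere, and absolute continuity gives $z(t) \equiv z(0)$. Then $0 = \frac{d}{dt}\nabla\phi(z(t)) \in -\cH(z(0))$, contradicting $z(0) \notin \cH^{-1}(0)$. Hence there is some $T > 0$ with $h(z(T)) < h(z(0)) = \sup_{t\in[0,T]} h(z(t))$, which is precisely the required Lyapunov descent. I expect the main obstacle to be the correct treatment of the normal cone term: one must invoke the chain rule for $\cX$ to ensure $\inner{N_\cX(z(t)),\dot z(t)} = \{0\}$, which is exactly what lets the $d_\cX$ contribution drop out and collapses the derivative to the clean negative quadratic form; a related care point is verifying that the trajectory remains in $\cX$ so that the three chain rules hold simultaneously.
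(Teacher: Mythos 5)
Your proposal is correct and follows essentially the same route as the paper's own proof: differentiate $h\circ z$ via the conservative-field chain rules for $D_f$ and $D_R$, use the chain rule for $\cX$ to annihilate the normal-cone term, collapse $\frac{d}{dt}h(z(t))$ to $-\inner{\nabla^2\phi(z(t))\dot z(t)}{\dot z(t)}\leq 0$, and rule out constancy by positive definiteness of $\nabla^2\phi$ together with the contradiction $0\in-\cH(z(0))$. The only difference is that you make explicit two points the paper leaves implicit — that the trajectory must remain in $\cX$ (so Definition \ref{def:chain_rule_set} applies) and the use of measurable selections — which are sound refinements rather than a different argument.
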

\begin{proof}
Consider any trajectory $z(t)$ of \eqref{Eq:DI_prox} with $0\notin\cH(z(0))$. By the chain rule, it holds that for almost all $t\geq0$,
\[
f(z(t))'=\inner{D_f(z(t)),\dot z(t)},\;R(z(t))'=\inner{D_R(z(t)),\dot z(t)},\;0=\inner{N_{\cX}(z(t)),\dot z(t)}.
\]
Then, we have that $h(z(t))'=\inner{\cH(z(t)),\dot z(t)}$ for almost all $t\geq0$. Note that for almost all $t\geq0$, it holds that $\nabla^2\phi(z(t))\dot z(t)\in-\cH(z(t))$. Then, we have 
\[
\begin{aligned}
&h(z(t))-h(z(0))=\int_{0}^t\inner{\cH(z(s)),\dot z(s)}ds
=-\int_0^t\inner{\nabla^2\phi(z(s))\dot z(s),\dot z(s)}ds\\
\leq&-\int_0^t\lambda_{\min}(\nabla^2\phi(z(s)))\norm{\dot z(s)}^2ds.
\end{aligned}
\]
If there exists no $t>0$, such that $h(z(t))<h(z(0))$. Then,  we have that $\dot z(t)=0$ for almost all $t\geq0$. Thus, $z(t)\equiv z(0)$. Therefore, we have that $0=\frac{d}{dt}\nabla\phi(z(s))\in-\cH(z(0))$, which is contradictory to the fact that $0\notin\cH(z(0))$. This completes the proof.
\end{proof}

By Lemma \ref{le:SBPG_approx_map}, Proposition \ref{prop:SBPG_Lyapunov} and Theorem \ref{convergence-thm-func-val}, we can directly derive the following convergence results.
\begin{theorem}
    \label{thm:SBPG}
    Suppose Assumptions \ref{assumption_prox_extension} and \ref{assumption_Sard_prox} hold. {Then almost surely,} any cluster point of $\{x_k\}$ generated by \eqref{Eq:SBPG} is a $\cH$-stationary point and the function values $\{f(x_k)+R(x_k)\}$ converge.
\end{theorem}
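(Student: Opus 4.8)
The plan is to obtain the theorem as a direct instantiation of the general convergence result Theorem~\ref{convergence-thm-func-val}, applied to the differential inclusion \eqref{Eq:DI_prox} with set-valued mapping $\cH = D_f + D_R + N_{\cX}$ and Lyapunov function $\Psi = h = f + R$. Accordingly, the task reduces to verifying that Assumptions~\ref{Assumption: DI} and~\ref{assumption_Sard_Lyapunov} both hold under Assumptions~\ref{assumption_prox_extension} and~\ref{assumption_Sard_prox}; once this bookkeeping is complete, the two conclusions (every cluster point of $\{x_k\}$ lies in $\cH^{-1}(0)$, i.e. is $\cH$-stationary, and $\{h(x_k)\}=\{f(x_k)+R(x_k)\}$ converges) are read off verbatim from Theorem~\ref{convergence-thm-func-val}.

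First I would dispatch Assumption~\ref{Assumption: DI}. Items (1), (3), (4) are immediate transcriptions: (1) is contained in Assumption~\ref{assumption_prox_extension}(1); (3) holds since Assumption~\ref{assumption_prox_extension}(3) gives $\sum_k\eta_k=\infty$ and $\eta_k=o(1/\log k)\to0$; and the noise condition (4) holds almost surely by Remark~\ref{rmk:assumption_DI}(3)(a), because $\{\xi_k\}$ is a uniformly bounded martingale-difference sequence with $\eta_k=o(1/\log k)$. The substantive part is item (2): writing $d_k = d_{f,k}+d_{R,k}+d_{\cX,k}+d_{e,k}$, the terms $\{d_{f,k}\}$, $\{d_{R,k}\}$ are bounded by Assumption~\ref{assumption_prox_extension}(2) and $\{d_{e,k}\}$ by Assumption~\ref{assumption_prox_extension}(4), but the normal-cone term $\{d_{\cX,k}\}$ is not controlled a priori. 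I would recover its boundedness exactly as in the proof of Lemma~\ref{le:SBPG_approx_map}: the first descent constraint in \eqref{Eq:SBPG} bounds $\cD_\phi(x_{k+1},x_k)/\norm{x_{k+1}-x_k}$, which together with the local strong convexity and local Lipschitzness of $\nabla\phi$ from Assumption~\ref{assumption_prox_extension}(1) gives $\sup_k\eta_k^{-1}\norm{\nabla\phi(x_{k+1})-\nabla\phi(x_k)}<\infty$; the reformulated update $\eta_k^{-1}(\nabla\phi(x_{k+1})-\nabla\phi(x_k))=-(d_{f,k}+d_{R,k}+d_{\cX,k}+d_{e,k}+\xi_k)$ then forces $\{d_{\cX,k}\}$ to be bounded. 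Finally, the averaged-approximation half of item (5) is precisely Lemma~\ref{le:SBPG_approx_map}, while the closed-graph requirement is met by routing through the outer semicontinuity of $D_f$, $D_R$, and $N^L_{\cX}$ and the inclusion $N^L_{\cX}\subset N_{\cX}$.

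Next I would verify Assumption~\ref{assumption_Sard_Lyapunov} with $\Psi=h$. The weak Morse--Sard condition (1) is exactly Assumption~\ref{assumption_Sard_prox}(2), and the Lyapunov descent condition (2), together with lower boundedness of $h$, is established in Proposition~\ref{prop:SBPG_Lyapunov} via the chain rule for $\cX$, $f$, $R$ and the positive definiteness of $\nabla^2\phi$ along trajectories. With both assumptions in force, Theorem~\ref{convergence-thm-func-val} applies and yields the stated conclusion; the almost-sure qualifier is inherited from the almost-sure validity of the noise condition.

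I expect the only genuinely non-routine point to be the verification of Assumption~\ref{Assumption: DI}(2) for the normal-cone increment $\{d_{\cX,k}\}$, precisely because neither $N_{\cX}$ nor $N^L_{\cX}$ is locally bounded, so the usual local-boundedness arguments of \cite{xiao2023convergence,ding2023adam} do not apply. This is exactly the reason the SBPG scheme carries the auxiliary descent constraint on its subproblem, and why Lemma~\ref{le:SBPG_approx_map} must handle $N_{\cX}$ through the outer semicontinuous $N^L_{\cX}$ rather than directly. Once the boundedness of $\{d_{\cX,k}\}$ and the corresponding approximation property are secured, the remainder is a pure assembly of the cited lemma, proposition, and general theorem.
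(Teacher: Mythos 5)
Your proposal is correct and follows essentially the same route as the paper, whose proof of Theorem~\ref{thm:SBPG} is precisely the one-line assembly of Lemma~\ref{le:SBPG_approx_map}, Proposition~\ref{prop:SBPG_Lyapunov}, and Theorem~\ref{convergence-thm-func-val}. Your expanded bookkeeping---in particular isolating the boundedness of the normal-cone increments $\{d_{\cX,k}\}$ via the descent constraint in \eqref{Eq:SBPG} as the only non-routine verification of Assumption~\ref{Assumption: DI}(2)---matches exactly what the paper does inside Lemma~\ref{le:SBPG_approx_map}.
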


\section{Numerical experiments}
In this section, we conduct preliminary numerical experiments to illustrate the performance of our proposed methods, focusing on training nonsmooth neural networks for image classification and language modeling tasks. These experiments are performed using an NVIDIA RTX 3090 GPU, and implemented in Python 3.9 with PyTorch version 1.12.0.

Our experiments employ a polynomial kernel-based stochastic Bregman subgradient method to train nonsmooth neural networks. Specifically, we use a blockwise polynomial kernel function $\phi(x) = \sum_{i=1}^Lp_i(\|x_i\|)$, where $x=(x_1,...,x_L)$ represents the concatenation of all layers' parameters in a neural network with $L$ layers, and each $p_i$ is a univariate polynomial of degree at least 2. When $p_i(\lambda) = \frac{1}{2}\lambda^2$, this approach becomes equivalent to SGD. The polynomial $p_i(\lambda) = \frac{1}{2}\lambda^2 + \frac{\sigma}{r}\lambda^r$ with $r\geq 4$, as discussed in the prior work \cite{ding2023nonconvex}, is applied in our numerical experiments. In this case, the update scheme is defined as follows
\begin{equation}
    \label{alg:MSBPG}
    \tag{MSBG}
    \begin{aligned}
    x_{k+1}&=\nabla\phi^*(\nabla\phi(x_k)-\eta_k m_k)\\
    m_{k+1}&=m_k-\theta_k(\nabla^2\phi(x_k))^{-1}(m_k-g_k),
    \end{aligned}
\end{equation}
where the calculation of $x_{k+1}$ involves solving a nonlinear equation. Given the allowance of inexact solutions for the subproblems of MSBG, as mentioned earlier, we consider an alternative preconditioned update scheme,
\begin{equation}
    \label{iMSBG}
    \tag{iMSBG}
    \begin{aligned}
     x_{k+1}&=x_k-\eta_k(\nabla^2\phi(x_k))^{-1}m_k\\
     m_{k+1}&=m_k-\theta_k(\nabla^2\phi(x_k))^{-1}(m_k-g_k),
    \end{aligned}
\end{equation}
which avoids solving a nonlinear equation in the first step of \eqref{alg:MSBPG}. By the Sherman-Morrison formula, we have
\[
(\nabla|_{x_i}^2p_i(\|x_i\|))^{-1}=\frac{1}{\sigma\norm{x_i}^{r-2}}I-\frac{\sigma(r-2)\norm{x_i}^{r-4}}{(1+\sigma\norm{x_i}^{r-2})^2+\sigma(r-2)(1+\sigma\norm{x_i}^{r-2})\norm{x_i}^2}x_ix_i^T,
\]
and $\nabla^2\phi(x)=\text{diag}\left((\nabla|_{x_1}^2p_1(\|x_1\|))^{-1},...,(\nabla|_{x_L}^2p_L(\|x_L\|))^{-1}\right)$ is block diagonal. We employ this kernel function and use the notation MSBG$K$/iMSBG$K$ to denote MSBG/iMSBG with the polynomial degree parameter $r$ set to $K$. Our experiments focus on two main applications: training Convolutional Neural Networks (CNNs) for image classification and Long Short-Term Memory (LSTM) \cite{hochreiter1997long} networks for language modeling. Specifically, our image classification experiments include training  Resnet14 and ResNet34 \cite{he2016deep} on CIFAR-10 and CIFAR-100 datasets \cite{krizhevsky2009learning}. Our language modeling experiments focus on 1-layer, 2-layer, and 3-layer LSTM networks applied to the Penn Treebank dataset \cite{marcus1993building}. 

\paragraph{CNNs on image classification} 
For the CNN experiments, we set the stepsize $\eta_s$ for each epoch $s$ as $\eta_s = \frac{\eta_0}{1 + (\log(s+1))^{1.1}}$, where $\eta_0$ is the initial stepsize. The momentum parameters are all set to $\theta_s=\frac{0.1}{1+\log(s+1)^{1.1}}$. For MSBG4 and iMSBG4, we choose $\sigma=0.01$, and for iMSBG6, $\sigma=0.0001$. We search the initial stepsize $\eta_0$ among the grid $\{0.001,0.01,0.1,1.0\}$ and select the value that achieves the highest test accuracy. The results are shown in Figure \ref{fig:cifar10} and Figure \ref{fig:cifar100}. We can observe that by selecting a proper kernel function, our Bregman subgradient methods can outperform SGD in terms of test accuracy. Moreover, we can see that MSBG4 and iMSBG4 have similar performance, although iMSBG4 solves the subproblem inexactly.
 
 Additionally, we evaluate the robustness of the selection of initial stepsize $\eta_0$, as demonstrated in Figure \ref{fig:robust}. We can see that the peak test accuracies of all methods are similar, yet our Bregman subgradient methods demonstrate a wider effective initial step size range, indicating a reduced sensitivity to the choice of initial step size -- a benefit attributable to the kernel function.

\begin{figure*}[th]
		\begin{center}
			\setlength{\tabcolsep}{0.0pt}  
			\scalebox{1}{\begin{tabular}{ccc}
					\includegraphics[width=0.33\linewidth]{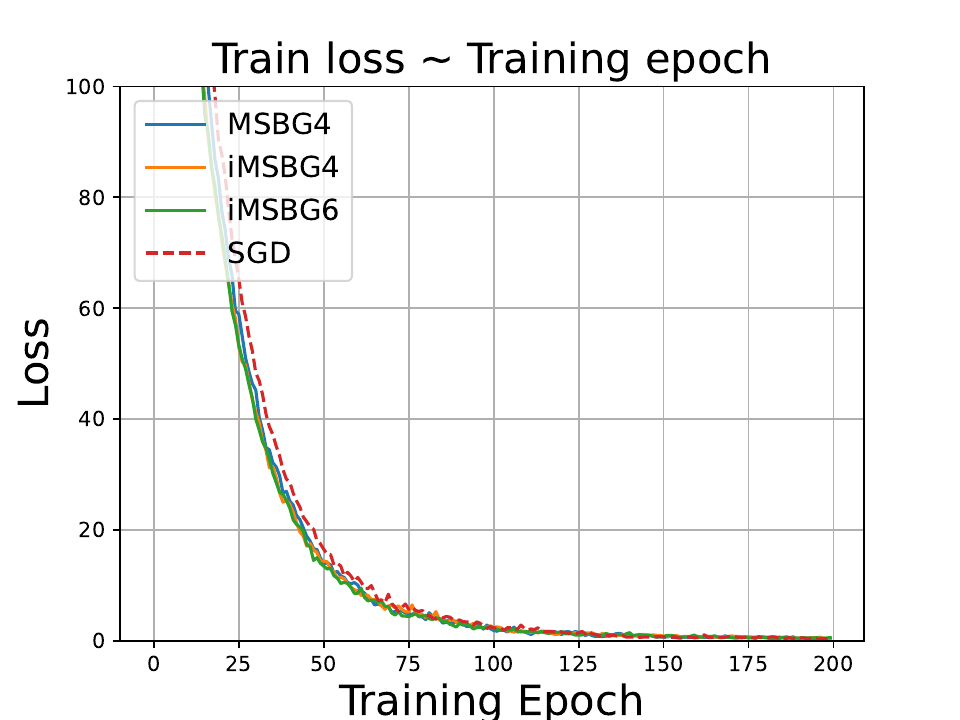}&
					\includegraphics[width=0.33\linewidth]{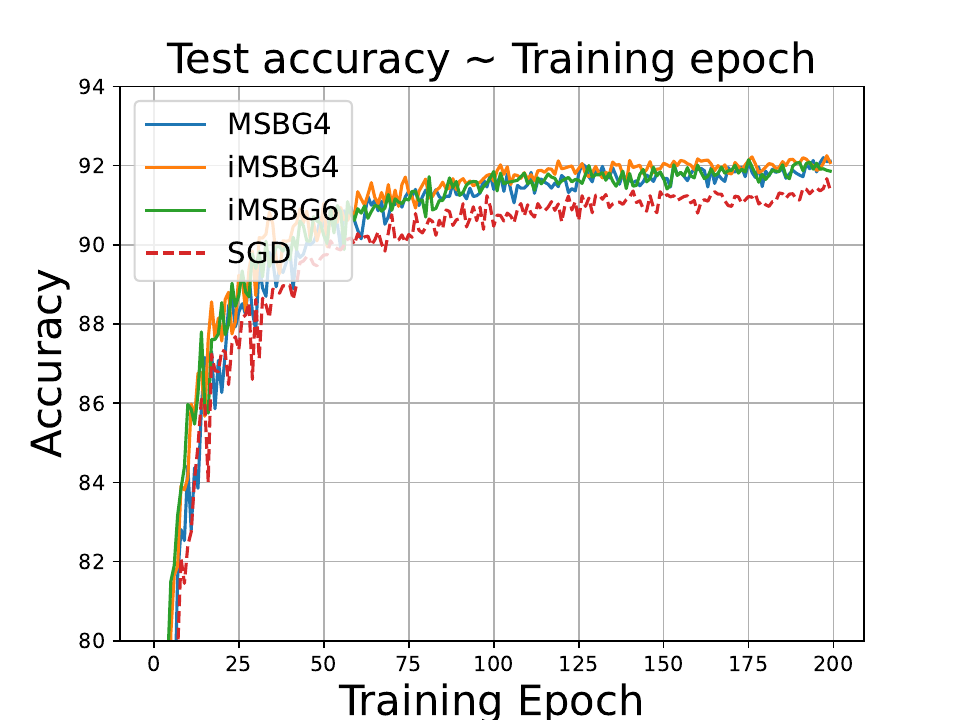}&
					\includegraphics[width=0.33\linewidth]{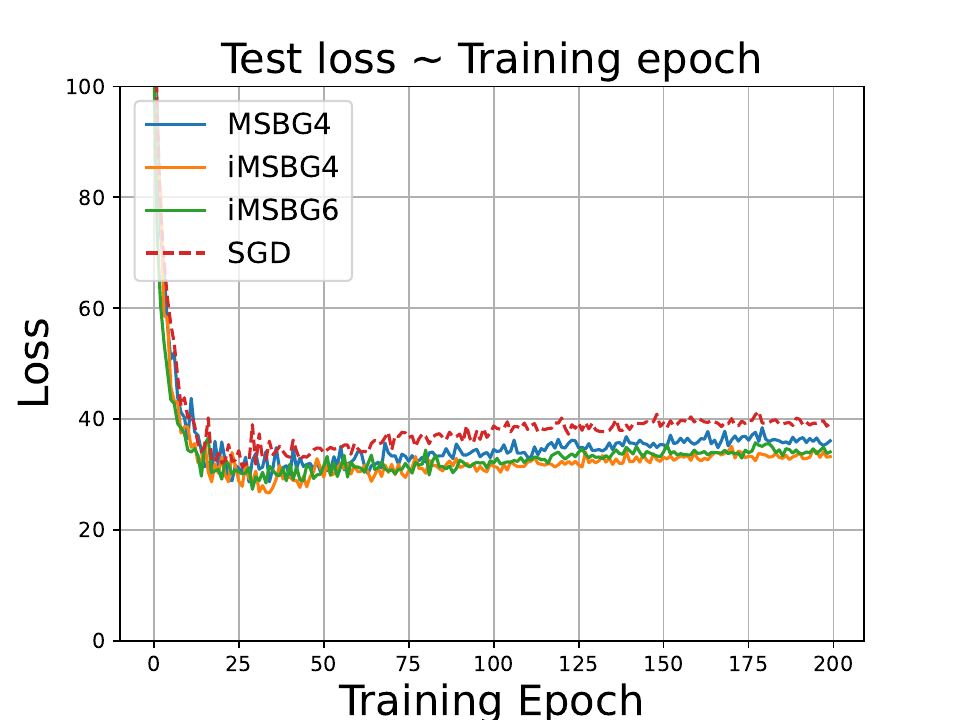}\\
		
					\footnotesize{(a) Train loss} &  \footnotesize{(b) Test accuracy}
					&  \footnotesize{(c) Test loss}
					\\
			\end{tabular}}
		\end{center}
		% \vspace{-0.8em}
		\caption{Resnet 14 on CIFAR10.} \label{fig:cifar10}
\end{figure*}

\begin{figure*}[th]
		\begin{center}
			\setlength{\tabcolsep}{0.0pt}  
			\scalebox{1}{\begin{tabular}{ccc}
					\includegraphics[width=0.33\linewidth]{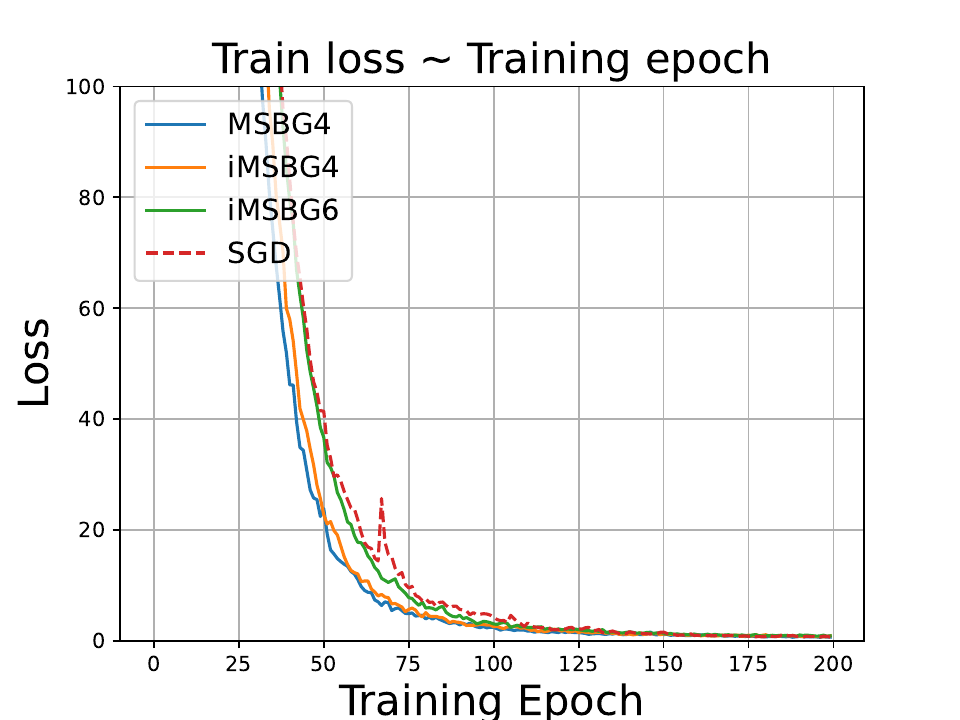}&
					\includegraphics[width=0.33\linewidth]{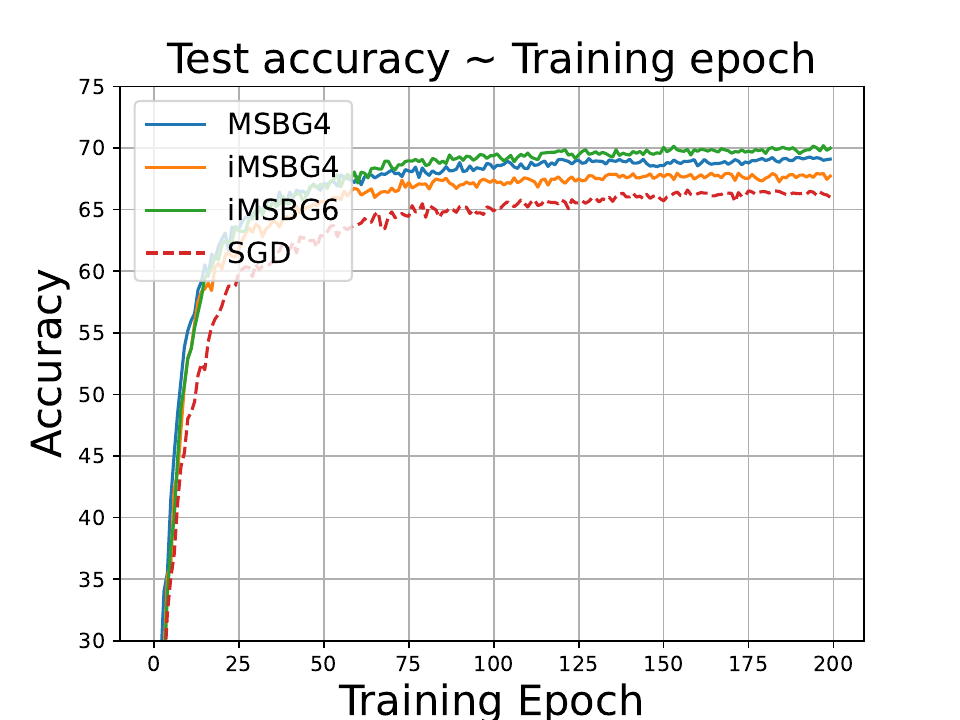}&
					\includegraphics[width=0.33\linewidth]{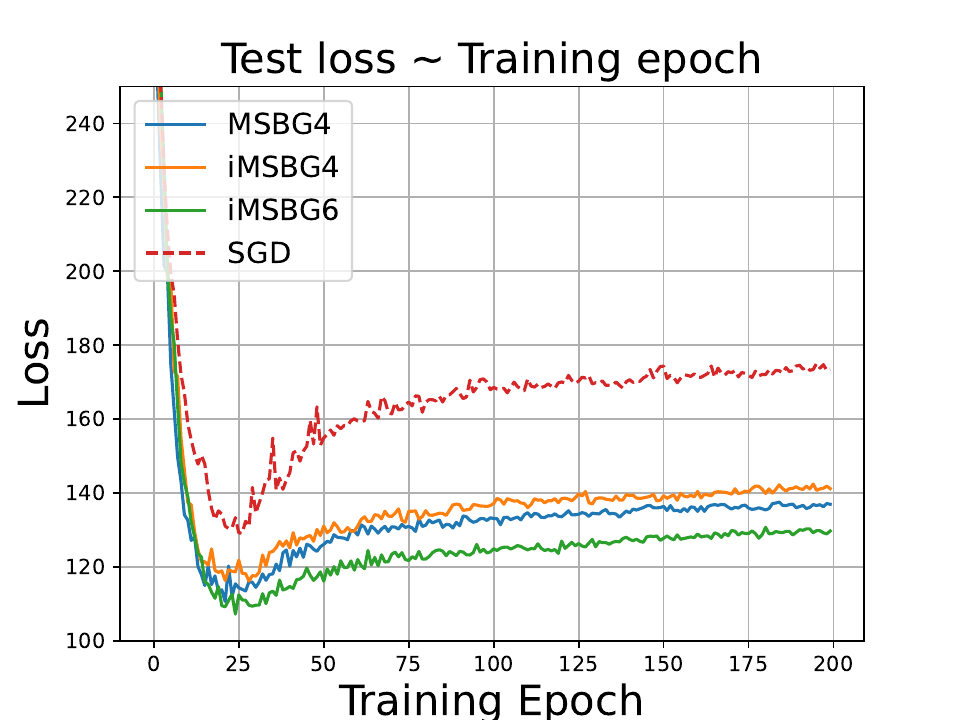}\\
		
					\footnotesize{(a) Train loss} &  \footnotesize{(b) Test accuracy}
					&  \footnotesize{(c) Test loss}
					\\
			\end{tabular}}
		\end{center}
		% \vspace{-0.8em}
		\caption{Resnet 34 on CIFAR100.} \label{fig:cifar100}
\end{figure*}

\begin{figure*}[th]
		\begin{center}
			\setlength{\tabcolsep}{0.0pt}  
			\scalebox{1}{\begin{tabular}{cccc}
	
					\includegraphics[width=0.33\linewidth]{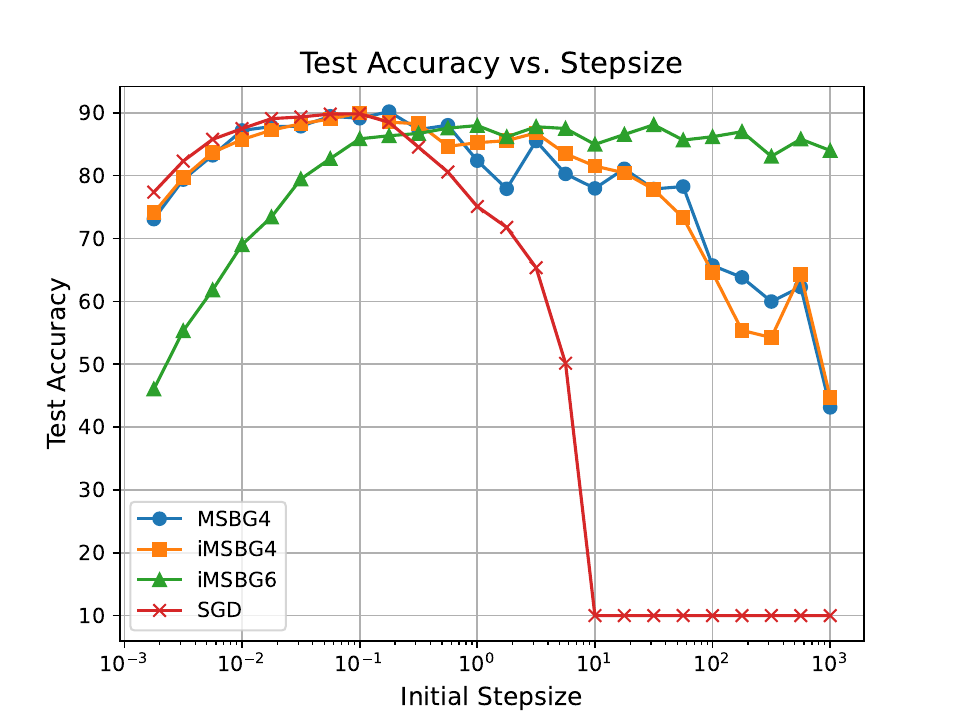}&
					\includegraphics[width=0.33\linewidth]{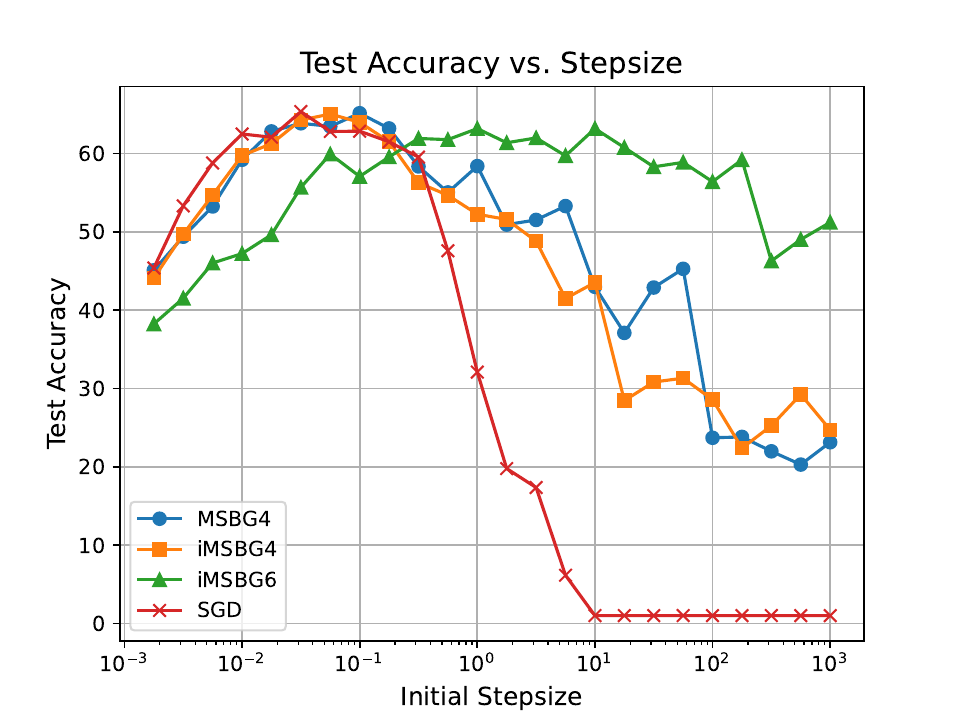} &
					\\
					 \multicolumn{1}{c}{\footnotesize{(a) ResNet14 on CIFAR10}}&
					\multicolumn{1}{c}{\footnotesize{(b) ResNet18 on CIFAR100}}                  
			\end{tabular}}
		\end{center}
		% \vspace{-0.8em}
		\caption{Robustness for initial stepsize. Figure (a) reports the test accuracy for 40 epochs. Figure (b) reports the test accuracy for 30 epochs.} \label{fig:robust}
	\end{figure*}

\paragraph{LSTMs on language modeling}
For the LSTM experiments, we initially set the stepsize as a constant. The stepsize is then decreased to 0.1 times its previous value at both the 150th and 300th epochs. After 300 epochs, we set $\eta_s=\frac{0.01\eta_0}{1+\log(s-300)^{1.1}}$, with $s$ representing the epoch number. Here $\eta_0$ is the initial stepsize. Within the s-th epoch, $\eta_k$ takes the constant value $\eta_s$. Similarly, the momentum parameters are all set to $\theta_s=\frac{0.1}{1+\log(s+1)^{1.1}}$. For MSBG4 and iMSBG4, we set $\sigma=10^{-6}$. We search $\eta_0$ among the grid $\{1,10,20,40,80,100\}$ and report the results based on achieving the highest test accuracy. The results are shown in Figures \ref{fig:LSTM1}, \ref{fig:LSTM2}, and \ref{fig:LSTM3}. We can observe that selecting an appropriate kernel function enables our Bregman subgradient methods to achieve superior test accuracy compared to SGD. 
 
 We also compare the one-epoch runtime for all considered methods over all tasks. We can observe in Table \ref{tab:computation_time} that the proposed inexact Bregman subgradient methods are nearly as efficient as SGD, largely because iMSBG circumvents the need to solve nonlinear equations in computing the Bregman proximal mapping.

\begin{figure*}[th]
		\begin{center}
			\setlength{\tabcolsep}{0.0pt}  
			\scalebox{1}{\begin{tabular}{cccc}
					\includegraphics[width=0.25\linewidth]{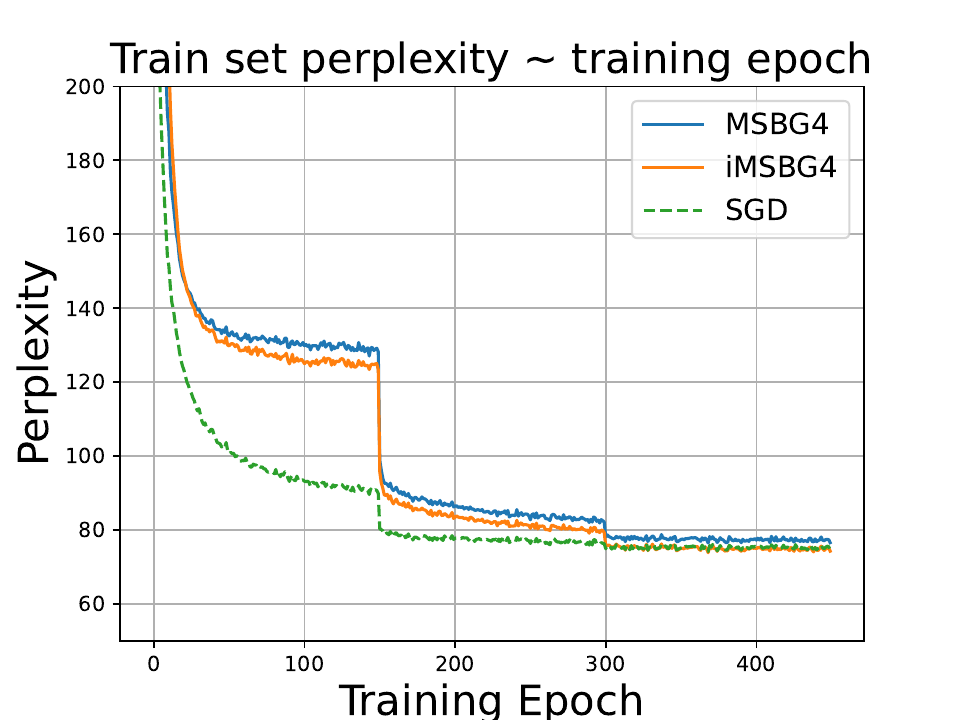}&
					\includegraphics[width=0.25\linewidth]{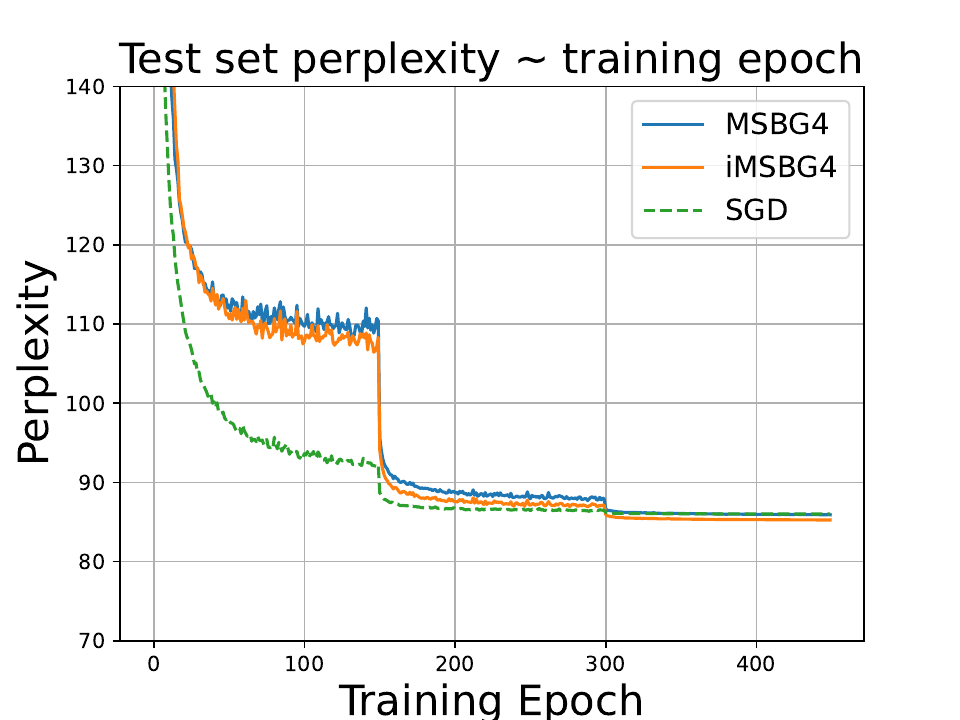}&
					\includegraphics[width=0.25\linewidth]{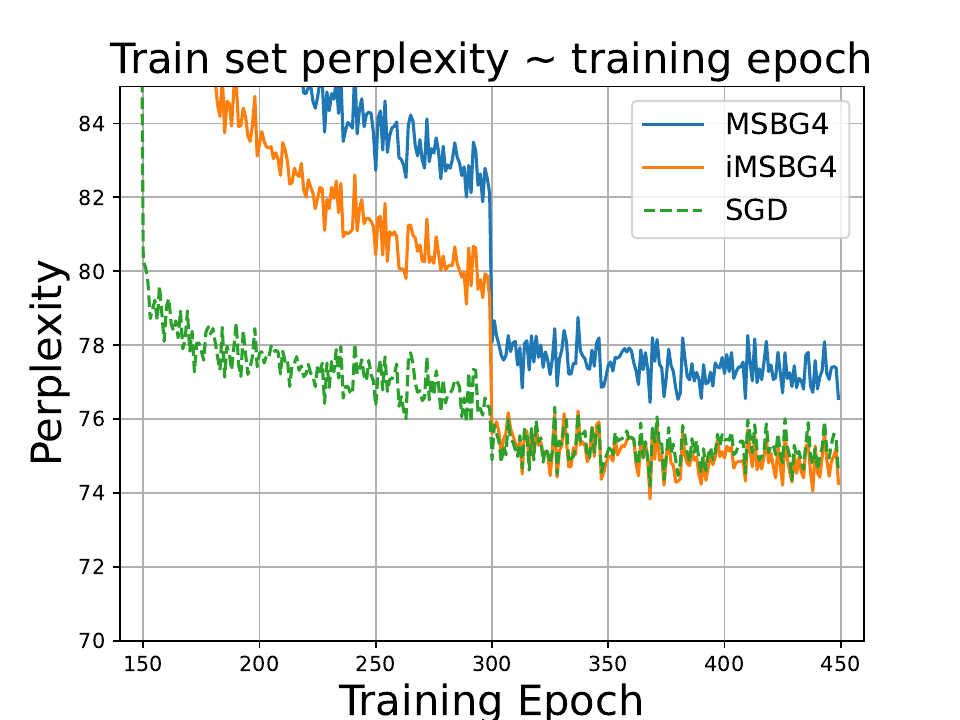} &
					\includegraphics[width=0.25\linewidth]{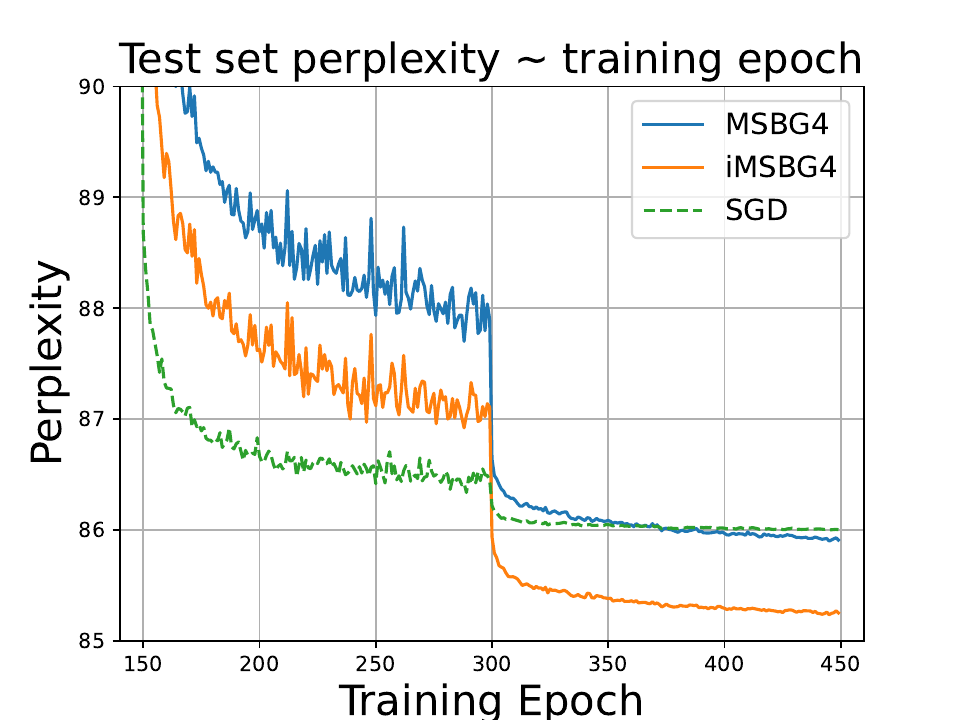}\\
					\multicolumn{1}{c}{\footnotesize{(a) Train perplexity}} &  \multicolumn{1}{c}{\footnotesize{(b) Test perplexity}}&
					\multicolumn{1}{c}{\footnotesize{(c) Train perplexity zooms in}}&
					\multicolumn{1}{c}{\footnotesize{(d) Test perplexity zooms in}}                  
			\end{tabular}}
		\end{center}
		% \vspace{-0.8em}
		\caption{1-layer LSTM on Penn Treebank dataset.} \label{fig:LSTM1}
	\end{figure*}
 \begin{figure*}[th]
		\begin{center}
			\setlength{\tabcolsep}{0.0pt}  
			\scalebox{1}{\begin{tabular}{cccc}
					\includegraphics[width=0.25\linewidth]{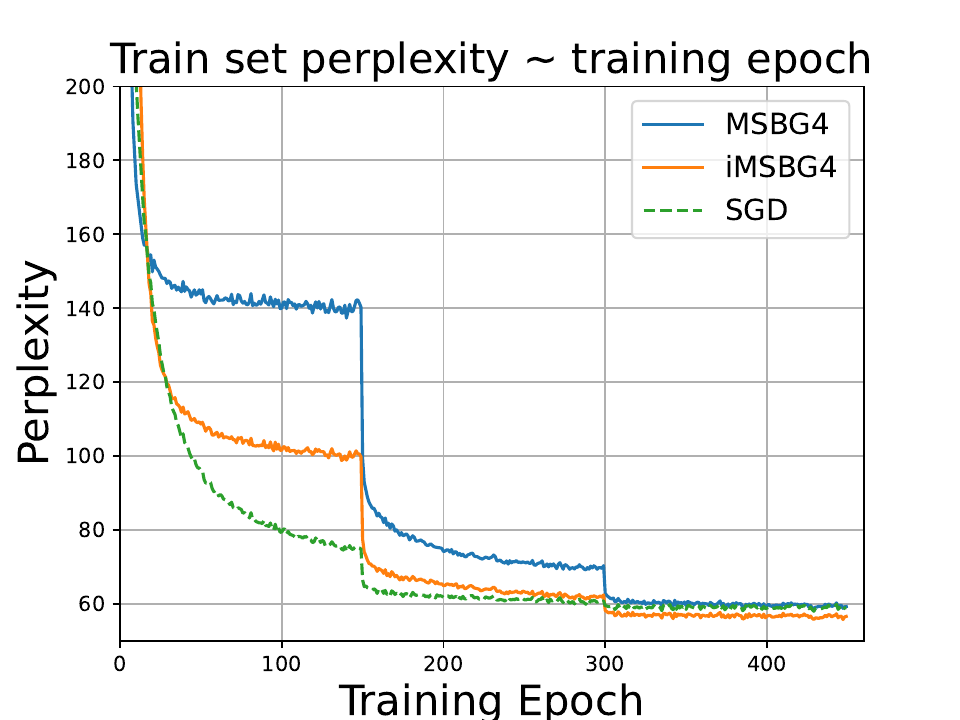}&
					\includegraphics[width=0.25\linewidth]{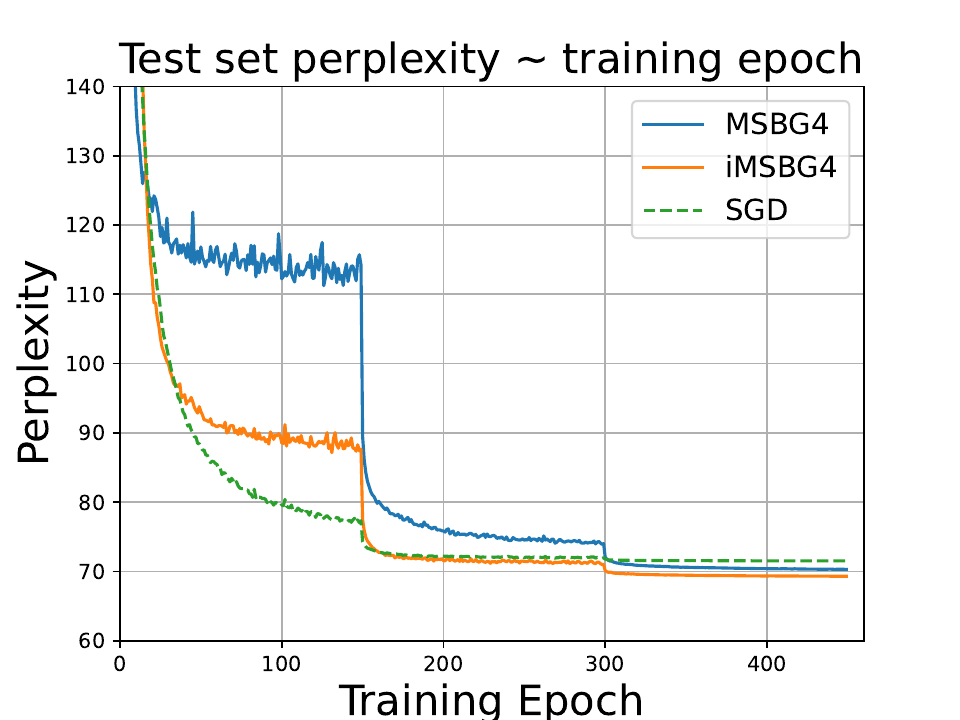}&
					\includegraphics[width=0.25\linewidth]{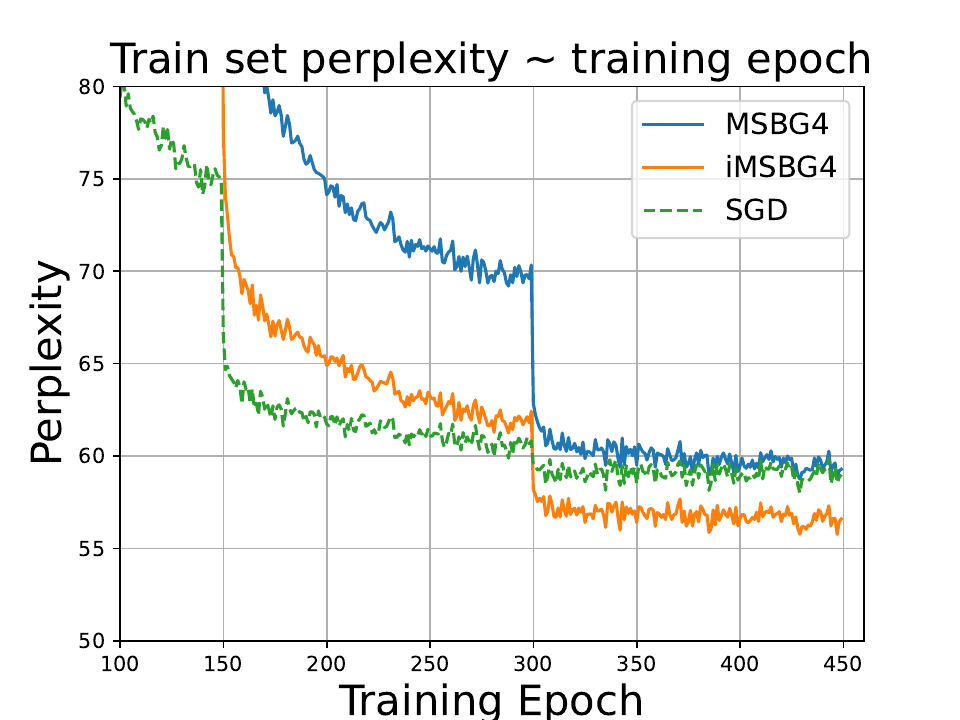} &
					\includegraphics[width=0.25\linewidth]{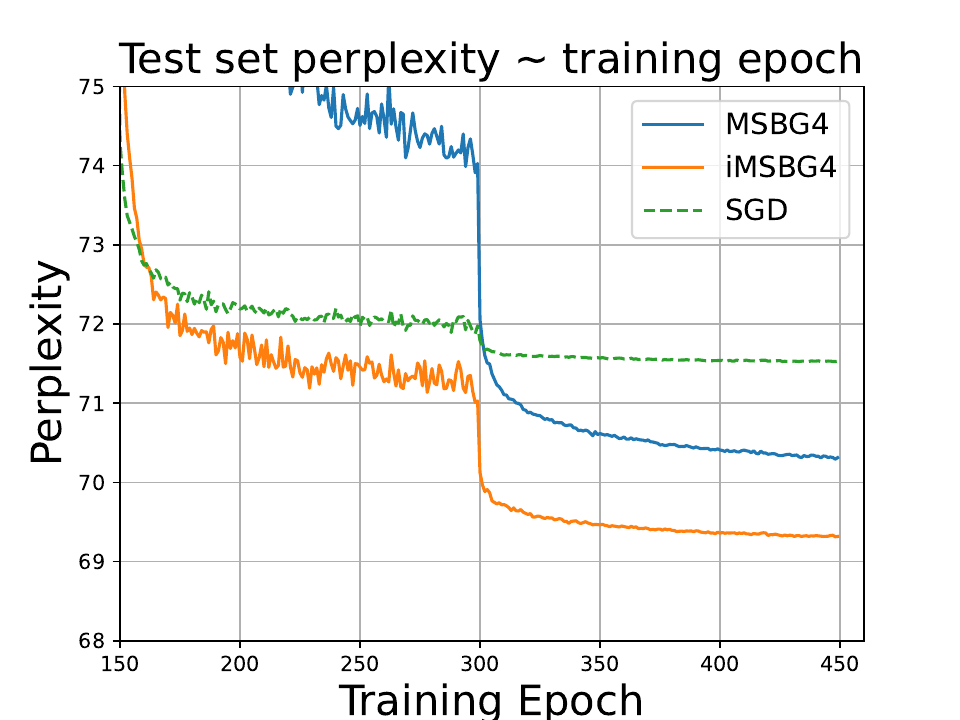}\\
					\multicolumn{1}{c}{\footnotesize{(a) Train perplexity}} &  \multicolumn{1}{c}{\footnotesize{(b) Test perplexity}}&
					\multicolumn{1}{c}{\footnotesize{(c) Train perplexity zooms in}}&
					\multicolumn{1}{c}{\footnotesize{(d) Test perplexity zooms in}}                  
			\end{tabular}}
		\end{center}
		% \vspace{-0.8em}
		\caption{2-layer LSTM on Penn Treebank dataset.} \label{fig:LSTM2}
	\end{figure*}

 \begin{figure*}[th]
		\begin{center}
			\setlength{\tabcolsep}{0.0pt}  
			\scalebox{1}{\begin{tabular}{cccc}
					\includegraphics[width=0.25\linewidth]{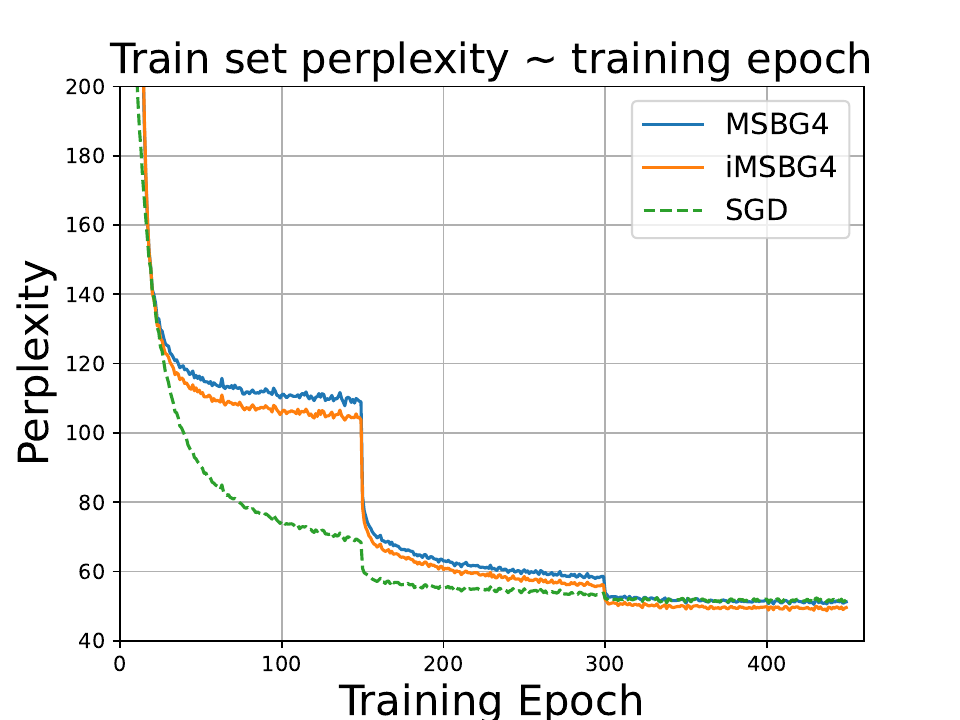}&
					\includegraphics[width=0.25\linewidth]{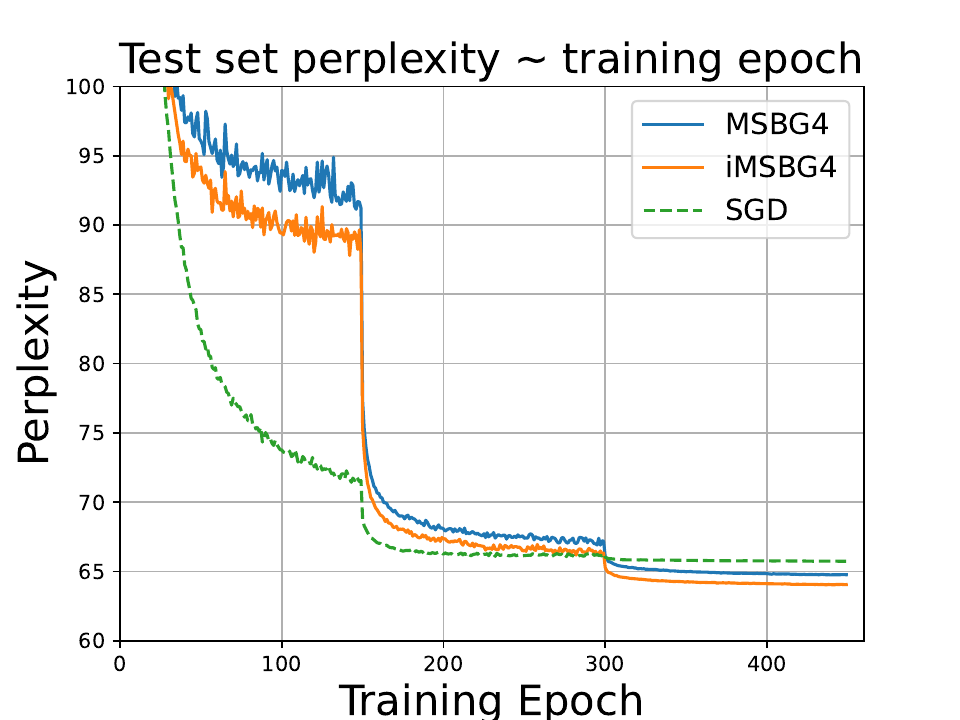}&
					\includegraphics[width=0.25\linewidth]{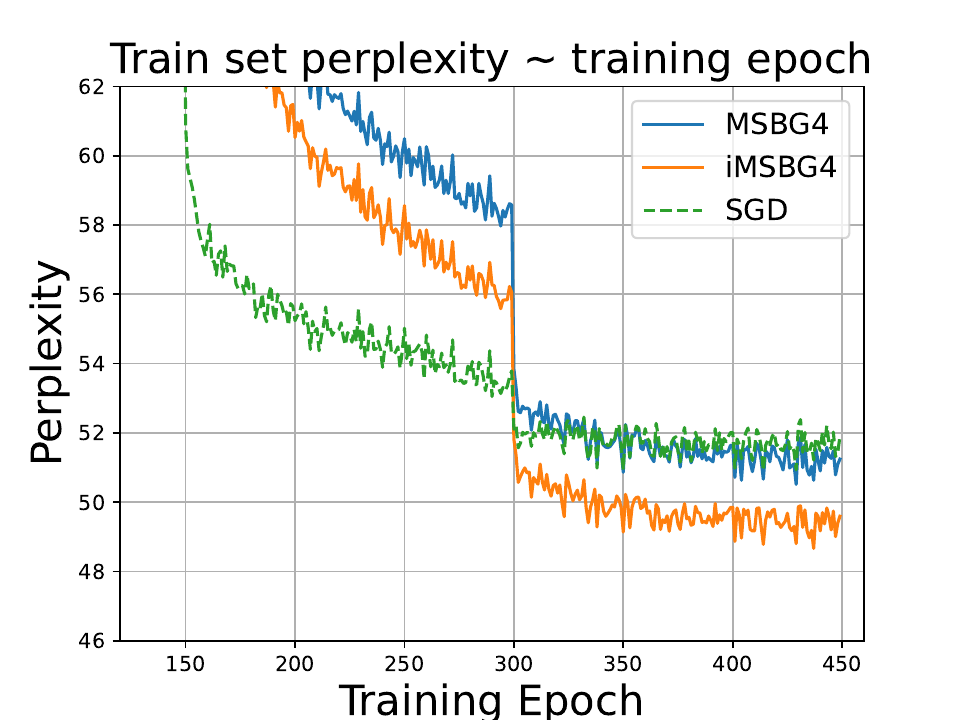} &
					\includegraphics[width=0.25\linewidth]{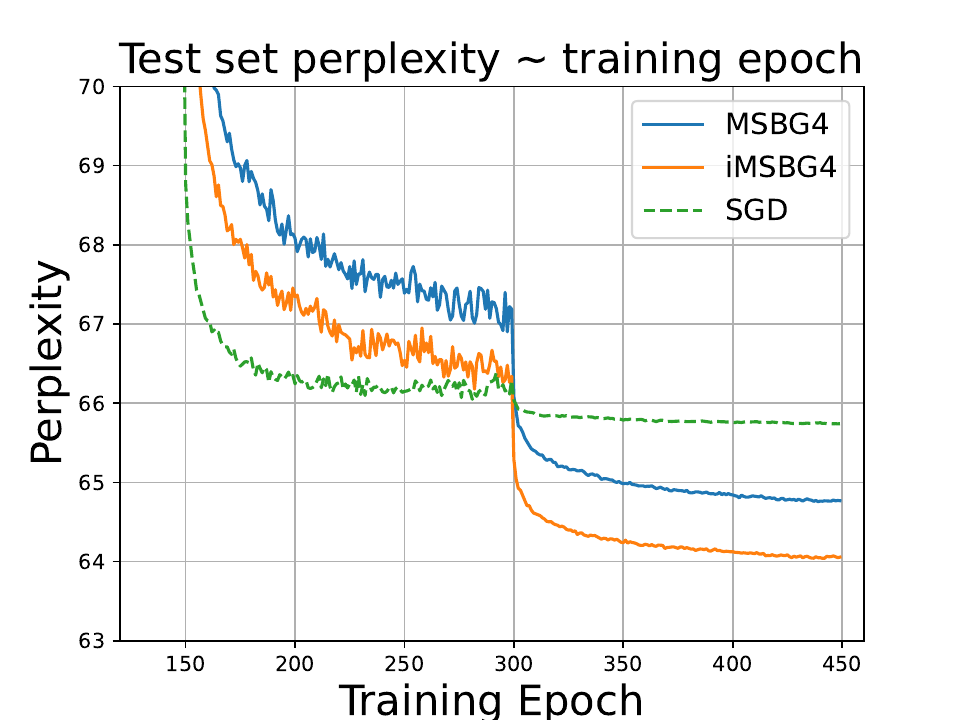}\\
					\multicolumn{1}{c}{\footnotesize{(a) Train perplexity}} &  \multicolumn{1}{c}{\footnotesize{(b) Test perplexity}}&
					\multicolumn{1}{c}{\footnotesize{(c) Train perplexity zooms in}}&
					\multicolumn{1}{c}{\footnotesize{(d) Test perplexity zooms in}}                  
			\end{tabular}}
		\end{center}
		% \vspace{-0.8em}
		\caption{3-layer LSTM on Penn Treebank dataset.} \label{fig:LSTM3}
	\end{figure*}

\begin{table}[ht]
\centering
\begin{tabular}{lcccccc}
\hline
\textbf{Method} & \textbf{Task 1} & \textbf{Task 2} & \textbf{Task 3} & \textbf{Task 4} & \textbf{Task 5} \\ \hline
SGD             & $18.11\pm0.32$              & $36.28\pm0.07$               & $5.56\pm0.03$              & $17.24\pm0.03$               & $30.11\pm0.06$              \\
MSBG4           & $20.57\pm 0.12$               & $40.43 \pm0.16$              & $6.48\pm 0.11$               & $ 18.97 \pm 0.12$              & $32.81\pm 0.23$              \\
iMSBG4          & $18.30\pm0.04$               & $37.25\pm0.07$               & $5.62\pm0.01$              & $18.45 \pm 0.04$               & $31.68\pm 0.27$              \\
iMSBG6          & $18.38\pm0.06$               & $37.24\pm0.12$               & $5.66\pm0.04$               & $18.52\pm0.04$              & $31.45\pm0.31$               \\ \hline
\end{tabular}
\caption{Computation Time of Each Epoch (in seconds). Task 1 is training ResNet14 on CIFAR10. Task 2 is training ResNet18 on CIFAR100. Task 3,4,5 are training one-layer, two-layer, three-layer LSTM on n Penn Treebank dataset, respectively.}
\label{tab:computation_time}
\end{table}

\section{Conclusions}

This paper explores Bregman subgradient methods for solving nonsmooth nonconvex optimization problems, particularly focusing on path-differentiable functions. We introduce a comprehensive stochastic Bregman framework that accommodates inexact evaluations of the abstract set-valued mapping. Employing a differential inclusion strategy and linear interpolation of dual sequences, we establish convergence results for our stochastic Bregman-type methods. This ensures that the discrete sequence subsequentially converges to the stable set of the differential inclusion, as well as the convergence of the Lyapunov function values. For applications, we demonstrate that stochastic Bregman subgradient methods, even with subproblems being solved inexactly, fit within our general framework, and we establish their convergence properties. Moreover, we integrate a momentum technique into the stochastic Bregman subgradient methods. Additionally, we extend our methodology to a proximal variant of the stochastic Bregman subgradient methods for solving constrained composite optimization problems and establish its convergence results. Finally, we conduct numerical experiments on training nonsmooth neural networks to evaluate the performance of our proposed stochastic Bregman subgradient methods. Our experimental results validate the practical benefits and effectiveness of our approaches in deep learning.

\vspace{2em}

\noindent\textbf{Funding} The research of Kim-Chuan Toh is supported by the Ministry of Education, Singapore, under its Academic Research Fund Tier 2 grant (MOE-T2EP20224-0029).

\vspace{2em}

\noindent\textbf{Data Availibility} The datasets analyzed during the current study are publicly available in the following resources:
\begin{itemize}
    \item The CIFAR-10 and CIFAR-100 datasets, introduced by \cite{krizhevsky2009learning}, can be accessed online at \url{https://www.cs.toronto.edu/~kriz/cifar.html}.
    \item The Penn Treebank dataset, described by \cite{marcus1993building}, is available at \url{https://catalog.ldc.upenn.edu/LDC99T42}.
\end{itemize}

\appendix  %This command ends the counting of sections.
\renewcommand{\thesection}{Appendix \Alph{section}:}
\renewcommand{\thelemma}{\Alph{section}.\arabic{lemma}}

\section{Proof of Theorem \ref{subsequential-thm}}
To establish under Assumption \ref{Assumption: DI} that, the discrete sequence generated by the general Bregman-type method \eqref{Eq:Breg_general_iterative} tracks a trajectory of the differential inclusion \eqref{Eq:Breg_DI}, we introduce a piecewise constant mapping $d(\cdot)$ defined by $d(s)=d_k$ for any $s\in[\lambda_\eta(k),\lambda_\eta(k+1))$. We also define a time-shifted solution $x_t(\cdot)$ to the following ordinary differential equation:
\[
\frac{d}{ds}\nabla\phi({x_t(s)})=-d(s)\;for\;all\;s\geq t,\;\mbox{with initial condition}\;\nabla\phi(x_t(t))=\nabla\phi(x(t)).
\]
The following lemma suggests that the interpolated process $x(t)$ defined by \eqref{Eq:interpolation} asymptotically approximates this time-shifted solution.
We remind the reader to note the difference between $x_t(\cdot)$ and $x^t(\cdot)$.
\begin{lemma}
\label{le:shift convergence}
Suppose Assumption \ref{Assumption: DI} holds, then for any $T>0$, it holds that
\begin{equation}
\lim_{t\rightarrow\infty}\sup_{s\in[t,t+T]}\|\nabla\phi(x(s))-\nabla\phi(x_t(s))\|=0.
\label{asymptotic-lemma}
\end{equation}
\end{lemma}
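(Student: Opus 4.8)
The plan is to pass to the dual coordinates $y_k:=\nabla\phi(x_k)$, in which both curves appearing in \eqref{asymptotic-lemma} become explicit and the deterministic drift cancels, reducing the claim to a statement about the noise alone. First I would rewrite the scheme \eqref{Eq:Breg_general_iterative} as $y_{k+1}=y_k-\eta_k(d_k+\xi_k)$, so that by the definition \eqref{Eq:interpolation} the dual interpolation $\bar y(s):=\nabla\phi(x(s))$ is exactly the piecewise-linear curve joining the $y_k$, with slope $-(d_k+\xi_k)$ on each $[\lambda_\eta(k),\lambda_\eta(k+1))$. Writing $\xi(\tau):=\xi_k$ on that interval (alongside the piecewise-constant $d(\cdot)$ already introduced before the lemma), continuity of $\bar y$ at the knots gives $\bar y(s)=\bar y(t)-\int_t^s(d(\tau)+\xi(\tau))\,d\tau$ for $s\ge t$. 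The shifted solution $y_t(s):=\nabla\phi(x_t(s))$ solves the same ODE without noise from the same initial value $y_t(t)=\bar y(t)$, hence $y_t(s)=\bar y(t)-\int_t^s d(\tau)\,d\tau$. Subtracting, the drift disappears and we obtain the key identity
\[
\bar y(s)-y_t(s)=-\int_t^s\xi(\tau)\,d\tau,\qquad s\ge t.
\]
This cancellation is the conceptual core of the argument and is precisely what is gained by interpolating the \emph{dual} sequence; note that no regularity of $\phi$ beyond the existence of $\nabla\phi^*$ (Remark \ref{rmk:assumption_DI}) has been used.

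It then remains to show $\sup_{s\in[t,t+T]}\left\|\int_t^s\xi(\tau)\,d\tau\right\|\to0$ as $t\to\infty$. Put $m:=\Lambda_\eta(t)$ and $n:=\Lambda_\eta(s)$. Splitting the integral into the two fractional end intervals and the full interior intervals, and comparing with the discrete sum $\sum_{k=m}^n\eta_k\xi_k$ (using $\eta_k=\lambda_\eta(k+1)-\lambda_\eta(k)$ together with $t-\lambda_\eta(m)\le\eta_m$ and $\lambda_\eta(n+1)-s\le\eta_n$), I would obtain the bound
\[
\left\|\int_t^s\xi(\tau)\,d\tau\right\|\le\left\|\sum_{k=m}^n\eta_k\xi_k\right\|+\eta_m\|\xi_m\|+\eta_n\|\xi_n\|.
\]
The first term is dominated by $\sup_{m\le i\le\Lambda_\eta(\lambda_\eta(m)+\tilde T)}\left\|\sum_{k=m}^i\eta_k\xi_k\right\|$, which vanishes as $m\to\infty$ by the noise condition \eqref{Eq:noise_cond}; the last two terms vanish because specializing \eqref{Eq:noise_cond} to a single summand already forces $\eta_k\|\xi_k\|\to0$. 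Since $\sum_k\eta_k=\infty$ implies $m=\Lambda_\eta(t)\to\infty$ as $t\to\infty$, taking the supremum over $s\in[t,t+T]$ drives all three terms to zero, which is \eqref{asymptotic-lemma}.

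The main obstacle is the bookkeeping needed to align the continuous window $[t,t+T]$ with the discrete summation window of \eqref{Eq:noise_cond}. Because $t$ need not be a knot $\lambda_\eta(m)$, one can have $t+T>\lambda_\eta(m)+T$, so the index $n=\Lambda_\eta(s)$ might fall outside the range $\{i:i\le\Lambda_\eta(\lambda_\eta(m)+T)\}$ covered by the assumption with horizon $T$. The remedy is to enlarge the horizon to $\tilde T:=T+1$: since $\eta_m\to0$, for all large $t$ we have $t+T<\lambda_\eta(m)+\tilde T$, hence $n\le\Lambda_\eta(t+T)\le\Lambda_\eta(\lambda_\eta(m)+\tilde T)$, and \eqref{Eq:noise_cond} is assumed for \emph{every} horizon. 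Handling the two fractional boundary terms uniformly and verifying that the enlarged window still lies within the reach of \eqref{Eq:noise_cond} is the only genuinely technical part; everything else follows from the exact drift cancellation established above.
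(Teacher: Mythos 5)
Your proposal is correct and follows essentially the same route as the paper's proof: both compare the dual interpolation $\nabla\phi(x(\cdot))$ with the shifted flow $\nabla\phi(x_t(\cdot))$ so that the drift $d(\cdot)$ cancels, reduce the claim to the noise condition \eqref{Eq:noise_cond} (applied with an enlarged horizon), and control the fractional end intervals by terms of order $\eta_k$. Your exact identity $\bar y(s)-y_t(s)=-\int_t^s\xi(\tau)\,d\tau$ is merely a cleaner packaging of the paper's telescoping computation (it even dispenses with the $\eta_k\|d_k\|$ boundary terms that the paper bounds via the boundedness of $\{d_k\}$ in Assumption \ref{Assumption: DI}), but the underlying mechanism is the same.
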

\begin{proof}
Fix an arbitrary $s\in[t,t+T]$, let $\tau_t=\Lambda_\eta(t)$, $\tau_s=\Lambda_\eta(s)$. By the definition of $x_t(\cdot)$, we have
\[
\begin{aligned}
&\nabla\phi(x_t(s))=\nabla\phi(x(t))-\int_{t}^{\lambda_\eta(\tau_t)}d(u)du-\int_{\lambda_\eta(\tau_t)}^{\lambda_\eta(\tau_s)}d(u)du-\int_{\lambda_\eta(\tau_s)}^s d(u)du\\
=&\nabla\phi(x(\lambda_\eta(\tau_t)))-\sum_{i=\tau_t}^{\tau_s-1}\eta_id_i+\left(\nabla\phi(x(t))-\int_t^{\lambda_\eta(\tau_t)}d(u)du-\nabla\phi(x(\lambda_\eta(\tau_t)))\right)-\int_{\lambda_\eta(\tau_s)}^s d(u)du\\
=&\nabla\phi(x({\lambda_\eta(\tau_s})))+\sum_{i=\tau_t}^{\tau_s-1}\eta_i\xi_i+\left(\nabla\phi(x(t))-\int_t^{\lambda_\eta(\tau_t)}d(u)du-\nabla\phi(x(\lambda_\eta(\tau_t)))\right)-\int_{\lambda_\eta(\tau_s)}^s d(u)du\\
=&\nabla\phi(x(s))+\sum_{i=\tau_t}^{\tau_s-1}\eta_i\xi_i+\left(\nabla\phi(x(t))-\int_t^{\lambda_\eta(\tau_t)}d(u)du-\nabla\phi(x(\lambda_\eta(\tau_t)))\right)\\
&+\left(\nabla\phi(x(\lambda_\eta(\tau_s)))-\int_{\lambda_\eta(\tau_s)}^sd(u)du-\nabla\phi(x(s))\right).
\end{aligned}
\]
Note that
\[
\begin{aligned}
&\norm{\nabla\phi(x(t))-\int_t^{\lambda_\eta(\tau_t)}d(u)du-\nabla\phi(x(\lambda_\eta(\tau_t)))}\\
\leq&{\|\nabla\phi(x(t))-\nabla\phi(x(\lambda_\eta(\tau_t)))\|}
+\int_t^{\lambda_\eta(\tau_t)}\|d(u)\|du
\\
 \leq&{\|\nabla\phi(x(\lambda_\eta(\tau_t+1)))-\nabla\phi(x(\lambda_\eta(\tau_t)))\|}
+\int_t^{\lambda_\eta(\tau_t)}\|d(u)\|du
\\
 \leq&\eta_{\tau_t}(\|\xi_{\tau_t}\|+2\|d_{\tau_t}\|),
 \end{aligned}
\]
and similarly 
$
\big\|\nabla\phi(x(\lambda_\eta(\tau_s)))-\int_{\lambda_\eta(\tau_s)}^sd(u)du-\nabla\phi(x(s))\big\|\leq\eta_{\tau_s}(\|\xi_{\tau_s}\|+2\|d_{\tau_s}\|).
$
By Assumption \ref{Assumption: DI}, we 
have that $\lim\sup\limits_{t\rightarrow\infty}\eta_{\tau_t}(\|\xi_{\tau_t}\|+2\|d_{\tau_t}\|)=0$, $\lim\sup\limits_{s\rightarrow\infty}\eta_{\tau_s}(\|\xi_{\tau_s}\|+2\|d_{\tau_s}\|)=0$, and $\lim\limits_{t\rightarrow\infty}\sup\limits_{s\in[t,t+T]}\sum_{i=\tau_t}^{\tau_s-1}\eta_i\xi_i=0$. Therefore, it holds that $\lim\limits_{t\rightarrow\infty}\sup\limits_{s\in[t,t+T]}\|\nabla\phi(x(s))-\nabla\phi(x_t(s))\|=0$, which completes the proof.
\end{proof}
\noindent{\textit{Proof of Theorem \ref{subsequential-thm}
.}} By the definition of $x_t(\cdot)$, it follows that $\nabla\phi(x_t(s))=\nabla\phi(x(t))-\int_t^s y(u)du$, for all $s\geq t$. By the boundedness of $y(s)$, Arzel$\grave{a}$-Ascoli's theorem \cite{rudin1953principles} ensures that $\{\nabla\phi(x_t(t+\cdot))\}_{t\in\R_+}$ is relatively compact in $\cC(\R_+,\R^n)$. For any subset $\{\tau_k\}\subset\R_+$, we consider the sequence $\{\nabla\phi(x^{\tau_k}(\cdot))\}_{k\in\mathbb{N}_+}$. There are two cases to consider. Case (i): the sequence $\{\tau_k\}$ has a cluster point $t$. Without loss of generality, assume that $\lim_{k\rightarrow\infty}\tau_k=t$. By the definition of $x(\cdot)$ {in \eqref{Eq:interpolation}} and uniform boundedness assumption, it holds that  $\nabla\phi(x(\cdot))$ is Lipschitz continuous. Thus, $\nabla\phi(x^{\tau_k}(\cdot))$ converges to $\nabla\phi(x^t(\cdot))$ in $\cC(\R_+,\R^n)$. Case (ii): $\lim_{k\rightarrow\infty}\tau_k=\infty$. Suppose that $\nabla\phi(x^{\tau_k}(\cdot))$ does not has any cluster point in $\cC(\R_+,\R^n)$. Since $\{\nabla\phi(x_{\tau_k}(\tau_k+\cdot))\}$ is relatively compact in $\cC(\R_+,\R^n)$, without loss of generality, we assume that $\lim_{k\rightarrow\infty}\nabla\phi(x_{\tau_k}(\tau_k+\cdot))=\bar y(\cdot)$. Then, for any compact set $C\subset\R_+$, it follows from Lemma \ref{le:shift convergence} that
\[
\begin{aligned}
&\lim_{k\rightarrow\infty,s\in C}\norm{\nabla\phi(x^{\tau_k}(s))-\bar y(s)}\\
\leq&\lim_{k\rightarrow\infty,s\in C}\norm{\nabla\phi(x^{\tau_k}(s))-\nabla\phi(x_{\tau_k}(\tau_k+s))}+\lim_{k\rightarrow\infty,s\in C}\norm{\nabla\phi(x_{\tau_k}(\tau_k+s))-\bar y(s)}=0,
\end{aligned}
\]
which contradicts that $\nabla\phi(x^{\tau_k}(\cdot))$ does not has any cluster point in $\cC(\R_+,\R^n)$. Thus, in both cases, $\{\nabla\phi(x^{\tau_k}(\cdot))\}$ is relatively compact in $\cC(\R_+,\R^n)$. Because $\{\tau_k\}$ is an arbitrary subset in $\R_+$, we have that $\{\nabla\phi(x^t(\cdot))\}$ is relatively compact in $\cC(\R_+,\R^n)$. 

Next, we aim to construct a trajectory of the differential inclusion. Define the shifts $d^t(\cdot)=d(t+\,\cdot)$. Consider $\{\tau_k\}$ satisfying $\tau_k\rightarrow\infty$, and fix $T>0$. Without loss of generality, we assume that $\nabla\phi(x^{\tau_k}(\cdot))$ converges to $\bar y(\cdot)$ in $\cC(\R_+,\R^n)$, otherwise, we choose its convergent subsequence. The set $\mathcal{Y}_T:=\{d^{\tau_k}(s),s\in[0,T]\}_{k\in\mathbb{N}}\subset L^2([0,T])$ is bounded. Therefore, it follows from the Banach-Alaoglu theorem \cite{rudin1991functional} that $\cY_T$ is weakly sequentially compact, i.e. there exists a subsequence $\{\tau_{k_j}\}$ and $\bar y(\cdot)\in L^2([0,T])$ such that $d^{\tau_{k_j}}(\cdot)\rightarrow\bar d(\cdot)$ weakly in $L^2([0,T])$. On the other hand, by Lemma \ref{le:shift convergence}, we have $\nabla\phi(x_{\tau_{k_j}}(\tau_{k_j}+\cdot))$ 
converges to $\bar y(\cdot)$ in $\cC(\R_+,\R^n)$. For any $\tau\in[0,T]$, by the definition of $x_t(\cdot)$, we have
\[
\nabla\phi(x_t(t+\tau))=\nabla\phi(x_t(t))-\int_0^\tau d^t(s)ds.
\]
Setting $t=\tau_{k_j}$ and taking the limit as $k_j\rightarrow\infty$, we deduce that
\[
\bar y(\tau)=\bar y(0)-\int_0^\tau\bar d(s)ds.
\]
Let $\bar x(\cdot)=\nabla\phi^*(\bar y(\cdot))$, since $T>0$ is arbitrary, we get \eqref{Eq:DI_int_form}. 

The remaining step is to verify that $\bar d(s)\in\mathcal{H}(\bar x(s))$ for almost all $s\geq0$. We again fix an arbitrary $T > 0$. Given that $\mathcal{Y}_T\subset L^2([0,T])$ is bounded, the Banach-Saks theorem \cite{rudin1991functional} implies that for ${\tau_k}$ (choosing a subsequence if necessary), $\frac{1}{N}\sum_{k=1}^Nd^{\tau_k}(s)$ strongly converges to $\bar d(s)$ in $L^2([0,T])$ . By the definition of $d(\cdot)$, we have $d^{\tau_k}(s)=d_{\Lambda_\eta(\tau_k+s)}$. Now for any $s\in[0,T]$, we have
\[
\begin{aligned}
    &\|\nabla\phi(x(\lambda_\eta(\Lambda_\eta(\tau_k+s))))-\nabla\phi(\bar x(s))\|\\
    \leq&\|\nabla\phi(x(\lambda_\eta(\Lambda_\eta(\tau_k+s))))-\nabla\phi(x(\tau_k+s))\|+\|\nabla\phi(x^{\tau_k}(s))-\nabla\phi(\bar x(s))\|\\
    \leq&\|\nabla\phi(x(\lambda_\eta(\Lambda_\eta(\tau_k+s)+1)))-\nabla\phi(x(\lambda_\eta(\Lambda_\eta(\tau_k+s))))\|+\|\nabla\phi(x^{\tau_k}(s))-\nabla\phi(\bar x(s))\|\\
    \leq&\eta_{\Lambda_\eta(\tau_k+s)}(\|\xi_{\Lambda_\eta(\tau_k+s)}\|+\|d_{\Lambda_\eta(\tau_k+s)}\|)+\|\nabla\phi(x^{\tau_k}(s))-\nabla\phi(\bar x(s))\|,
\end{aligned}
\]
which converges to zero as $k\rightarrow\infty$. By the continuity of $\nabla\phi^*$, we have that $x(\lambda_\eta(\Lambda_\eta(\tau_k+s)))$ converges to $\bar x(s)$ in $\cC(\R_+,\R^n)$. By Assumption \ref{Assumption: DI}, for almost any $s\in[0,T]$, we have
\[
\begin{aligned}
{\rm dist}(\bar d(s),\mathcal{H}(\bar x(s)))&\leq\bigg\|\frac{1}{N}\sum_{k=1}^Nd^{\tau_k}(s)-\bar d(s)\bigg\|+{\rm dist}\left(\frac{1}{N}\sum_{k=1}^Nd^{\tau_k}(s),\mathcal{H}(\bar x(s))\right)\\
=&\bigg\|\frac{1}{N}\sum_{k=1}^Nd^{\tau_k}(s)-\bar d(s)\bigg\|+
{\rm dist}\left(\frac{1}{N}\sum_{k=1}^Nd_{\Lambda_\eta(\lambda_\eta(\tau_k+s))},\mathcal{H}(\bar x(s))\right)\rightarrow0.
\end{aligned}
\]
Since $T$ is arbitrary and $\mathcal{H}(\bar x(s))$ is a closed set, we conclude that $\bar d(s)\in\mathcal{H}(\bar x(s))$ for almost all $s\geq0$. This completes the proof.\qed

\section{Proof of Theorem \ref{convergence-thm-func-val}}\label{append:thm-convergence}
\begin{lemma}
\label{le:liminf_limsup}
Suppose Assumption \ref{Assumption: DI} is satisfied. Then, it holds that
\begin{equation}
    \label{Eq:liminf_limsup}
    \liminf_{t\rightarrow\infty}\Psi(x(t))=\liminf_{k\rightarrow\infty}\Psi(x_k),\quad \limsup_{t\rightarrow\infty}\Psi(x(t))=\limsup_{k\rightarrow\infty}\Psi(x_k).
\end{equation}
\end{lemma}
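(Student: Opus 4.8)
The plan is to show that the interpolated curve $x(\cdot)$ and the discrete iterates $\{x_k\}$ become uniformly close, in the sense of $\Psi$-values, on each interval $[\lambda_\eta(k),\lambda_\eta(k+1))$ as $k\to\infty$, which forces their limit inferior and limit superior to coincide. The starting observation is that the interpolation \eqref{Eq:interpolation} passes exactly through the iterates: at $t=\lambda_\eta(k)$ the interpolation weight vanishes, so $\nabla\phi(x(\lambda_\eta(k)))=\nabla\phi(x_k)$ and hence $x(\lambda_\eta(k))=x_k$ by Remark \ref{rmk:assumption_DI}(1). Since $\{\lambda_\eta(k)\}$ is an unbounded subset of $\R_+$ (using $\sum_k\eta_k=\infty$), restricting the time variable to this grid already gives $\liminf_{t\to\infty}\Psi(x(t))\le\liminf_{k\to\infty}\Psi(x_k)$ and $\limsup_{t\to\infty}\Psi(x(t))\ge\limsup_{k\to\infty}\Psi(x_k)$, so only the reverse inequalities require work.

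For the reverse inequalities, I would first control the dual displacement. For $t\in[\lambda_\eta(k),\lambda_\eta(k+1))$ the interpolation weight lies in $[0,1)$, so $\nabla\phi(x(t))$ is a point of the segment joining $\nabla\phi(x_k)$ and $\nabla\phi(x_{k+1})$; combined with the update rule \eqref{Eq:Breg_general_iterative} this yields $\norm{\nabla\phi(x(t))-\nabla\phi(x_k)}\le\eta_k\norm{d_k+\xi_k}$. The term $\eta_k\norm{d_k}$ tends to zero because $\eta_k\to 0$ and $\{d_k\}$ is bounded, while $\eta_k\norm{\xi_k}\to 0$ follows from the noise condition \eqref{Eq:noise_cond} by taking $i=s$ in the supremum, which is admissible since $\Lambda_\eta(\lambda_\eta(s)+T)\ge s$ whenever $T>0$. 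Hence $\sup_{t\in[\lambda_\eta(k),\lambda_\eta(k+1))}\norm{\nabla\phi(x(t))-\nabla\phi(x_k)}\to 0$ as $k\to\infty$.

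Next I would transfer this closeness from the dual space to the primal space and then to $\Psi$. Because $\{\nabla\phi(x_k)\}$ is bounded by Assumption \ref{Assumption: DI}(2) and every $\nabla\phi(x(t))$ lies on a segment between consecutive dual iterates, all dual values stay in a compact set $K$; since $\phi$ is supercoercive Legendre, $\nabla\phi^*=(\nabla\phi)^{-1}$ is continuous by Remark \ref{rmk:assumption_DI}(1), hence uniformly continuous on $K$, which upgrades the dual estimate to $\sup_{t\in[\lambda_\eta(k),\lambda_\eta(k+1))}\norm{x(t)-x_k}\to 0$. As $\{x_k\}$ and $\{x(t)\}$ then lie in a common compact set on which the continuous function $\Psi$ is uniformly continuous, I obtain $\varepsilon_k:=\sup_{t\in[\lambda_\eta(k),\lambda_\eta(k+1))}|\Psi(x(t))-\Psi(x_k)|\to 0$.

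Finally I would conclude by a sandwiching argument. For every $t$, writing $k=\Lambda_\eta(t)$ gives $\Psi(x_k)-\varepsilon_k\le\Psi(x(t))\le\Psi(x_k)+\varepsilon_k$, and $k\to\infty$ as $t\to\infty$; taking $\liminf$ and $\limsup$ and using $\varepsilon_k\to 0$ yields the reverse inequalities, which together with the first paragraph establishes \eqref{Eq:liminf_limsup}. The main obstacle is the third step: unlike the Euclidean setting, the interpolation is affine only in the dual variable, so the pointwise smallness of $\Psi(x(t))-\Psi(x_k)$ cannot be read off directly and must be obtained by passing through $\nabla\phi^*$, which is precisely where the supercoercive Legendre structure of $\phi$ and the boundedness of the dual sequence are essential.
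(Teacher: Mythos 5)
Your proposal is correct and follows essentially the same route as the paper: the core of both arguments is the observation that for $t\in[\lambda_\eta(k),\lambda_\eta(k+1))$ the dual point $\nabla\phi(x(t))$ lies on the segment between $\nabla\phi(x_k)$ and $\nabla\phi(x_{k+1})$, so its distance to $\nabla\phi(x_k)$ is at most $\eta_k\norm{d_k+\xi_k}\to0$, after which the continuity of $\Psi\circ\nabla\phi^*$ (granted by the supercoercive Legendre structure) transfers this to $\Psi$-values. The only difference is packaging --- you make the estimate uniform on each interval and sandwich, whereas the paper extracts a subsequence $\tau_i$ realizing the $\liminf$ with $x(\tau_i)\to x^*$ and compares $\Psi(x_{\Lambda_\eta(\tau_i)})$ to $\Psi(x^*)$ --- and your verification that $\eta_s\norm{\xi_s}\to0$ by taking $i=s$ in \eqref{Eq:noise_cond} is exactly the step the paper uses implicitly to obtain $\norm{\nabla\phi(x_{k+1})-\nabla\phi(x_k)}\to0$.
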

\begin{proof}
For simplicity, we only prove the case for $\liminf$, the argument for $\limsup$ follows similarly. By Assumption \ref{Assumption: DI}, we have that 
\begin{equation}
\lim_{k\rightarrow\infty}\|\nabla\phi(x_{k+1})-\nabla\phi(x_k)\|=0.
\label{Eq:succ_gap_convergence}
\end{equation}
Let $\tau_i\rightarrow\infty$ be an arbitrary sequence with $x(\tau_i)\rightarrow x^*$. By the definition of $\lambda_\eta$ and $\Lambda_\eta$, it follows that
\[
\begin{aligned}
\|\nabla\phi(x_{\Lambda_\eta(\tau_i)})-\nabla\phi(x^*)\|\leq&\|\nabla\phi(x_{\Lambda_\eta(\tau_i)})-\nabla\phi(x(\tau_i))\|+\|\nabla\phi(x(\tau_i))-\nabla\phi(x^*)\|\\
\leq&\|\nabla\phi(x_{\Lambda_\eta(\tau_i)})-\nabla\phi(x_{\Lambda_\eta(\tau_i)+1})\|+\|\nabla\phi(x(\tau_i))-\nabla\phi(x^*)\|.
\end{aligned}
\]
The right-hand side converges to zero. By Remark \ref{rmk:assumption_DI}.1, we have that $\Psi\circ\nabla\phi^*$ is continuous, so $\lim_{i\rightarrow\infty}\Psi(x_{\Lambda_\eta(\tau_i)})=\lim_{i\rightarrow\infty}\Psi\circ\nabla\phi^*\circ\nabla\phi(x_{\Lambda_\eta(\tau_i)})=\Psi\circ\nabla\phi^*\circ\nabla\phi(x^*)=\Psi(x^*)$. By choosing $\tau_i\rightarrow\infty$ as the sequence realizing $\liminf_{t\rightarrow\infty}\Psi(x(t))$, and assuming without loss of generality that $x(\tau_i)\rightarrow x^*$, we get
\[   \liminf_{k\rightarrow\infty}\Psi(x_k)\leq
\lim_{{i\rightarrow\infty}}\Psi(x_{\Lambda_\eta(\tau_i)})=\Psi(x^*)
=\liminf_{t\rightarrow\infty}\Psi(x(t)).
\]
This completes the proof.
\end{proof}

The following lemma demonstrates that the function value converges along the interpolated process  defined in \eqref{Eq:interpolation}. The non-escape argument in the proof is adapted from those of \cite[Proposition 3.5]{davis2020stochastic} and \cite[Theorem 3.20]{duchi2018stochastic}, with particular attention paid to the dual map $\nabla\phi$ and its inverse $\nabla\phi^*$.
\begin{lemma}
    \label{prop:nonescape}
    Suppose Assumption \ref{Assumption: DI} and \ref{assumption_Sard_Lyapunov} hold, then function value $\Psi(x(t))$ converges {as $t\to\infty$.}
\end{lemma}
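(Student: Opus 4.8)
The plan is to prove convergence by showing that the extreme limits $p:=\liminf_{t\to\infty}\Psi(x(t))$ and $q:=\limsup_{t\to\infty}\Psi(x(t))$ coincide. Both are finite: $\Psi$ is lower bounded by Assumption \ref{assumption_Sard_Lyapunov}.2, while the dual interpolation \eqref{Eq:interpolation} keeps $\nabla\phi(x(t))$ in the convex hull of the bounded set $\{\nabla\phi(x_k)\}$, so $x(t)=\nabla\phi^*(\nabla\phi(x(t)))$ stays in a bounded set by continuity of $\nabla\phi^*$ (Remark \ref{rmk:assumption_DI}.1), and hence $\Psi(x(t))$ is bounded. Since $p=q$ is exactly the statement that $\lim_{t\to\infty}\Psi(x(t))$ exists, I would argue by contradiction, assuming $p<q$.

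Under this assumption I would first select a regular level and associated upcrossing intervals. By the weak Morse--Sard condition (Assumption \ref{assumption_Sard_Lyapunov}.1) the set $\{\Psi(x):0\in\mathcal{H}(x)\}$ has empty interior, so its complement is dense; I pick $v_1\in(p,q)$ in this complement, so that every $x$ with $\Psi(x)=v_1$ satisfies $0\notin\mathcal{H}(x)$, and fix any $v_2\in(v_1,q)$. The map $t\mapsto\Psi(x(t))$ is continuous (as $x(\cdot)$ is continuous through $\nabla\phi^*$), with $\liminf<v_1$ and $\limsup>v_2$; a standard last-exit/first-entry construction then yields disjoint intervals $[a_k,b_k]$ with $a_k\to\infty$, $\Psi(x(a_k))=v_1$, $\Psi(x(b_k))=v_2$, and $v_1\le\Psi(x(t))\le v_2$ for $t\in[a_k,b_k]$.

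Next I would pass to a limiting trajectory and invoke the Lyapunov property. Applying Theorem \ref{subsequential-thm} with $\tau_k=a_k$, the shifted curves $\nabla\phi(x^{a_k}(\cdot))$ are relatively compact in $\cC(\R_+,\R^n)$; along a subsequence they converge to $\nabla\phi(\bar x(\cdot))$ with $\bar x(\cdot)$ a trajectory of \eqref{Eq:Breg_DI} and $\bar x(0)=x^*$, where $\Psi(x^*)=\lim_k\Psi(x(a_k))=v_1$ by continuity. Since $v_1$ is regular, $x^*\notin\mathcal{H}^{-1}(0)$, so Assumption \ref{assumption_Sard_Lyapunov}.2 furnishes $T>0$ with
\[
\sup_{t\in[0,T]}\Psi(\bar x(t))\le\Psi(x^*)=v_1\quad\text{and}\quad\Psi(\bar x(T))<v_1.
\]
Because $\Psi\circ\nabla\phi^*$ is continuous and $\nabla\phi(x^{a_k}(\cdot))\to\nabla\phi(\bar x(\cdot))$ uniformly on $[0,T]$, I obtain $\Psi(x^{a_k}(\cdot))\to\Psi(\bar x(\cdot))$ uniformly on $[0,T]$, hence also $\sup_{[0,T]}\Psi(x^{a_k})\to\sup_{[0,T]}\Psi(\bar x)$. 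Along this subsequence two cases remain. If $b_k-a_k\le T$ infinitely often, then $\sup_{t\in[0,T]}\Psi(x^{a_k}(t))\ge\Psi(x(b_k))=v_2$, and passing to the limit forces $\sup_{t\in[0,T]}\Psi(\bar x(t))\ge v_2>v_1$, contradicting the displayed bound. Otherwise $b_k-a_k>T$ for all large $k$, so $a_k+T\in[a_k,b_k]$ gives $\Psi(x^{a_k}(T))=\Psi(x(a_k+T))\ge v_1$, and passing to the limit yields $\Psi(\bar x(T))\ge v_1$, contradicting $\Psi(\bar x(T))<v_1$. Either way a contradiction arises, so $p=q$ and $\Psi(x(t))$ converges.

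The main obstacle I anticipate is precisely this last dichotomy: the descent horizon $T$ supplied by the Lyapunov property is dictated by the limit trajectory and is not a priori comparable to the (possibly vanishing or growing) upcrossing durations $b_k-a_k$. Splitting into the ``fast'' and ``slow'' upcrossing cases is what lets the single regular level $v_1$ together with the two-sided control $v_1\le\Psi(x(t))\le v_2$ close the argument; the supporting facts---uniform convergence of $\Psi$ along the shifted curves and the nonstationarity of $x^*$ via weak Morse--Sard---are routine once the interpolation machinery of Theorem \ref{subsequential-thm} is in place.
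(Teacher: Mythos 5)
Your proposal is correct and follows essentially the same non-escape argument as the paper: pick a regular value supplied by the weak Morse--Sard condition, build upcrossing intervals, extract a limiting trajectory via Theorem \ref{subsequential-thm}, and contradict the Lyapunov descent using uniform convergence of $\Psi\circ\nabla\phi^*$ along the shifted dual curves. The differences are purely organizational: the paper normalizes $\liminf_{t\to\infty}\Psi(x(t))=0$ and runs a discrete last-entrance/first-exit construction on $\{x_k\}$ between the levels $\epsilon$ and $2\epsilon$ (with the regular value $\epsilon$ taken arbitrarily small), whereas you fix two levels $v_1<v_2$ inside the gap $(\liminf,\limsup)$ of the interpolated process and close with the dichotomy $b_k-a_k\le T$ versus $b_k-a_k>T$, whose two cases correspond exactly to the paper's steps showing that the interpolated curve cannot exit $\cL_{2\epsilon}$ before time $T$ and that it must re-enter $\cL_{\epsilon}$ near time $T$, contradicting the last-entrance maximality.
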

\begin{proof}
Assuming $\liminf_{t\rightarrow\infty}\Psi(x(t))=0$, we define the level set $\cL_r:=\{x\in\R^n:\;\Psi(x)\leq r\}$. Choose any $\epsilon>0$ such that $\epsilon\notin\Psi(\mathcal{H}^{-1}(0))$. The weak Morse-Sard condition in Assumption \ref{Assumption: DI} implies that $\epsilon$ can be chosen arbitrarily small, and Lemma \ref{le:liminf_limsup} implies that there are infinitely many $k$ such that $x_k\in\cL_\epsilon$. For any $x_k\in\cL_\epsilon$, by the continuity of $\Psi$, we have that there exists $\alpha>0$ such that ${\rm dist}(x_k,\R^n\setminus\cL_{2\epsilon})>\alpha$. By \eqref{Eq:succ_gap_convergence}, for sufficiently large $k$, we have that $\norm{x_{k+1}-x_k}<\alpha$. Therefore, for all large $k$, $x_k\in\cL_\epsilon$ implies that $x_{k+1}\in\cL_{2\epsilon}$.
Now, we define the last entrance and the first subsequent exit times,
\begin{equation}
\begin{aligned}
    &k_i=\max\{m\geq j_{i-1}:x_m\in\cL_\epsilon\}, \quad
    &j_i=\min\{m\geq k_i:x_m\in\R^n\setminus\cL_{2\epsilon}\}.
\end{aligned}
\label{Eq:upcross_def}
\end{equation}
We prove that such upcrossing occurs for finite times. Otherwise, if there exists $\{k_i\}$ such that $\lim_{i\rightarrow\infty}k_i=\infty$, then Theorem \ref{subsequential-thm} indicates that, up to a subsequence, $\nabla\phi(x^{\lambda_\eta(k_i)}(\cdot))$ converges to $\nabla\phi(\bar x(\cdot))$, where $\bar x(\cdot)$ is a trajectory of \eqref{Eq:Breg_DI}. By the definition of $k_i$, we have $\Psi(x_{k_i})\leq\epsilon$, $\Psi(x_{k_i+1})>\epsilon$. By \eqref{Eq:succ_gap_convergence} and the continuity of $\Psi\circ\nabla\phi^*$, we have that $\lim_{i\rightarrow\infty}\Psi(x_{k_i})=\lim_{i\rightarrow\infty}\Psi(x_{k_i+1})=\epsilon$. Recall that $x^{\lambda_\eta(k_i)}(0)=x_{k_i}$, therefore, $\Psi(\bar x(0))=\lim_{i\rightarrow\infty}\Psi(x_{k_i})=\epsilon$. Since $\bar x(0)$ is not in the stable set, there exists $T>0$, such that  
\[
\Psi(\bar x(T))<\sup_{s\in[0,T]}\Psi(\bar x(s))\leq\Psi(\bar x(0))=\epsilon.
\]
Then, there exists $\delta>0$, such that $\Psi(\bar x(T))\leq\epsilon-2\delta$. Moreover, for sufficiently large $i$, we have
\[
\sup_{s\in[0,T]}\Psi(x^{\lambda_\eta(k_i)}(s))\leq\sup_{s\in[0,T]}\Psi(\bar x(s))+\sup_{s\in[0,T]}|\Psi(x^{\lambda_\eta(k_i)}(s))-\Psi(\bar x(s))|\leq2\epsilon.
\]
The last inequality comes from the uniform convergence of $\{\nabla\phi(x^{\lambda_\eta(k_i)}(\cdot))\}$ in $\cC(\R_+,\R^n)$. This implies that for all large $i$, $\{x(\lambda_\eta(k_i)+s):s\in[0,T]\}\subset\cL_{2\epsilon}$. Thus it holds that $\lambda_\eta(j_i)>\lambda_\eta(k_i)+T$. Let $l_i=\max\{m:\lambda_\eta(k_i)\leq \lambda_\eta(m)\leq \lambda_\eta(k_i)+T\}$. Then $\|\nabla\phi(x_{l_i})-\nabla\phi(x^{\lambda_\eta(k_i)}(T))\|\leq\|\nabla\phi(x_{l_i})-\nabla\phi(x_{l_i+1})\|\rightarrow0$, and hence $\norm{\nabla\phi(x_{l_i})-\nabla\phi(\bar x(T))}\rightarrow0$ {as $i\to\infty$}. By the continuity of $\Psi\circ\nabla\phi^*$, we have that $\Psi(x_{l_i})\leq\epsilon-\delta$ for all large $i$. By the definition of $k_i$ and $j_i$, we have that $\lambda_\eta(j_i)<\lambda_\eta(l_i)\leq\lambda_\eta(k_i)+T$, which leads to a contradiction. Therefore, for all large $k$, $\Psi(x_k)\leq2\epsilon$. Since $\epsilon$ can be chosen arbitrarily small, it holds that $\lim_{k\rightarrow\infty}\Psi(x_k)=0$. This completes the proof.
\end{proof}

% \noindent{\textit{Proof of Theorem \ref{convergence-thm-func-val}.}} Since $\{x_k\}$ is bounded, let $x^*$ be any cluster point of $\{x_k\}$, and ${\lim_{i\rightarrow\infty}}{x_{k_i}}=x^*$. By Theorem \ref{subsequential-thm}, up to a subsequence, $\nabla\phi(x^{\lambda_\eta(k_i)}(\cdot))\rightarrow\nabla\phi(\bar x(\cdot))$ for some $\bar x(\cdot)\in\cC(\R_+,\R^n)$. Note that $x_{{k_i}}=x^{\lambda_\eta(k_i)}(0)$, so we have 
% \[
% \begin{aligned}
% \norm{\bar x(0)-x^*}\leq&\limsup_{i\rightarrow\infty}\left(\norm{\bar x(0)-x^{\lambda_\eta(k_i)}(0)}+\norm{x^{\lambda_\eta(k_i)}(0)-x_{k_i}}+\norm{x_{k_i}-x^*}\right)\\
% =&\limsup_{i\rightarrow\infty}\left(\norm{\nabla\phi^*(\nabla\phi(\bar x(0)))-\nabla\phi^*(\nabla\phi(x^{\lambda_\eta(k_i)}(0)))}+\norm{x^{\lambda_\eta(k_i)}(0)-x_{k_i}}+\norm{x_{k_i}-x^*}\right)\\
% =&0,
% \end{aligned}
% \]
% where the last equality comes from the continuity of $\nabla\phi^*$. Hence $\bar x(0)=x^*$. Suppose $x^*\notin \mathcal{H}^{-1}(0)$, then by Assumption \ref{assumption_Sard_Lyapunov}, there exists $T>0$, such that 
% \[
% \Psi(\bar x(T))<\sup_{t\in[0,T]}\Psi(\bar x(t))\leq\Psi(x^*).
% \]
% On the other hand, by Proposition \ref{prop:nonescape}, $\Psi(x(t))$ converges {as $t\to\infty$}. Therefore, we obtain
% \[
% \Psi(\bar x(T))={\lim_{i\rightarrow\infty}}\Psi(x^{\lambda_\eta(k_i)}(T))=
% {\lim_{i\rightarrow\infty}}
% \Psi(x({\lambda_\eta(k_i)}+T))=\lim_{t\rightarrow\infty}\Psi(x(t))=\Psi(x^*), 
% \]
% which leads to a contradiction. Therefore, $0\in \mathcal{H}(x^*)$. This completes the proof. \qed

\section{Proof of Theorem \ref{The_convergence_Nondiminishing_RR}}\label{append:stability}
{
\begin{lemma}
\label{Le_controlled_noise_RR_appendix}
Suppose Assumption \ref{Assumption_Reshuffling} holds for the sequence of noises $\{\xi_{k}\}$ and stepsizes $\{\eta_k\}$. Moreover, the Lyapunov function $\Psi$ associated with $\cH$ is coercive. Then for any $\varepsilon > 0$ and $T>0$, there exists $\eta_{\varepsilon} > 0$ such that for any $\{\eta_k\}$ satisfying $\limsup_{k\to +\infty} \eta_k \leq \eta_{\varepsilon}$, almost surely, it holds that 
\begin{equation}
    \limsup_{s\to +\infty} \sup_{s \leq i\leq {\Lambda_\eta(\lambda_\eta(s) + T)} }  \norm{ \sum_{k = s}^i \eta_k \xi_{k}} \leq \varepsilon.  
\end{equation}
\end{lemma}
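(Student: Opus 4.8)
The plan is to exploit the two structural consequences of Assumption \ref{Assumption_Reshuffling}: within each epoch of length $N$ the stepsize is frozen, and over each epoch the reshuffling noise sums to zero. Together these force the increment of $\sum_k \eta_k \xi_k$ over any \emph{complete} epoch to cancel exactly, so that the only surviving contributions to a partial sum come from the (at most two) incomplete epochs straddling the endpoints $s$ and $i$. Since there are at most $2N$ such terms and $\{\xi_k\}$ is uniformly bounded, say by $M:=\sup_k\norm{\xi_k}<\infty$, each surviving term is bounded by $M$ times the local stepsize, and the local stepsize is eventually $\leq\eta_\varepsilon$. Choosing $\eta_\varepsilon=\varepsilon/(2NM)$ then yields the claim. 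Coercivity of $\Psi$ plays no role in this noise estimate itself; it is only invoked elsewhere in the proof of Theorem \ref{The_convergence_Nondiminishing_RR} to upgrade this bound into global stability.

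Concretely, I would first set $M:=\sup_k\norm{\xi_k}$, finite and deterministic by Assumption \ref{Assumption_Reshuffling}(2). Fix $s$ and $i$ with $s\leq i\leq\Lambda_\eta(\lambda_\eta(s)+T)$, let $p$ be the smallest multiple of $N$ with $p\geq s$ and $q$ the largest multiple of $N$ with $q\leq i+1$, and split
\[
\sum_{k=s}^{i}\eta_k\xi_k=\sum_{k=s}^{p-1}\eta_k\xi_k+\sum_{j}\Big(\sum_{k=jN}^{(j+1)N-1}\eta_k\xi_k\Big)+\sum_{k=q}^{i}\eta_k\xi_k,
\]
where the middle sum runs over the complete epochs contained in $[p,q)$. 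On each complete epoch the stepsize equals a common value $\bar\eta_j$ by Assumption \ref{Assumption_Reshuffling}(1), so that epoch's contribution is $\bar\eta_j\sum_{k=jN}^{(j+1)N-1}\xi_k$, which vanishes almost surely once the indices are aligned with the zero-sum property of Assumption \ref{Assumption_Reshuffling}(2). The head and tail each contain fewer than $N$ indices, so their norms are bounded by $N\,(\sup_{k\geq s}\eta_k)\,M$ each. Hence $\norm{\sum_{k=s}^i\eta_k\xi_k}\leq 2N\,(\sup_{k\geq s}\eta_k)\,M$ uniformly in $i$, and taking $\limsup_{s\to\infty}$ together with $\sup_{k\geq s}\eta_k\downarrow\limsup_k\eta_k\leq\eta_\varepsilon$ gives the bound $2NM\eta_\varepsilon$; setting $\eta_\varepsilon:=\varepsilon/(2NM)$ closes the argument.

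The one point requiring care — and the main obstacle — is the off-by-one between the stepsize-constant blocks $\{jN,\dots,(j+1)N-1\}$ and the noise zero-sum blocks $\{jN+1,\dots,(j+1)N\}$ dictated by Assumption \ref{Assumption_Reshuffling}(2). With the summand $\xi_k$ (rather than $\xi_{k+1}$), the per-epoch cancellation is not literally exact: grouping over $\{jN+1,\dots,(j+1)N\}$ leaves a remainder $(\bar\eta_{j+1}-\bar\eta_j)\xi_{(j+1)N}$, because the terminal index carries the next epoch's stepsize. These remainders telescope across consecutive complete epochs, and their total is controlled by the total variation $\sum_j|\bar\eta_{j+1}-\bar\eta_j|$ of the epoch-wise stepsizes over the window; for constant or monotone stepsizes — the non-diminishing regime in which Theorem \ref{The_convergence_Nondiminishing_RR} is applied — this telescopes to at most $2\eta_\varepsilon$ and is absorbed into the same bound. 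Thus the estimate is robust provided one either reindexes so the blocks align (equivalently, argues the $\xi_{k+1}$ form of Lemma \ref{Le_controlled_noise_RR}) or controls this total-variation remainder, and it is precisely here that the bounded, slowly varying nature of the epoch-wise stepsizes enters.
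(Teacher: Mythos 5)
Your proof is correct and follows essentially the same route as the paper's: the identical head/complete-epochs/tail decomposition, exact cancellation of the complete epochs via the constant per-epoch stepsize together with the zero-sum reshuffling property, the bound $2NM\sup_{k\geq s}\eta_k$ on the at most $2N$ surviving boundary terms, and the same choice $\eta_\varepsilon=\varepsilon/(2NM)$ (and you rightly observe that coercivity of $\Psi$ plays no role in this estimate). Your off-by-one caveat is in fact well spotted: the appendix statement's summand $\xi_k$ is misaligned with the zero-sum blocks $\{jN+1,\dots,(j+1)N\}$ of Assumption \ref{Assumption_Reshuffling}(2), and the paper's proof simply asserts that the middle sum vanishes, which literally holds only for the $\xi_{k+1}$-indexed sum of Lemma \ref{Le_controlled_noise_RR}; your proposed fix — reindexing to the $\xi_{k+1}$ form, or controlling the Abel-summation remainder for monotone stepsizes — is the correct resolution of what is evidently an indexing inconsistency in the paper.
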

\begin{proof}
From Assumption \ref{Assumption_Reshuffling}, it holds for all $s \geq 0$ and any $i$ satisfying $s \leq i\leq {\Lambda_\eta(\lambda_\eta(s) + T)}$ that 
\begin{equation}
\begin{aligned}
&\norm{ \sum_{k = s}^i \eta_k \xi_{k}} \leq \norm{ \sum_{k = s}^{N\cdot \lceil \frac{s}{N} \rceil-1} \eta_k \xi_{k}} + \norm{ \sum_{k = N\cdot \lceil \frac{s}{N} \rceil}^{N\cdot \lfloor \frac{i}{N} \rfloor-1} \eta_k \xi_{k}} + \norm{ \sum_{k=N\cdot \lfloor \frac{i}{N} \rfloor}^i \eta_k \xi_{k}}\\
={}& \norm{ \sum_{k = s}^{N\cdot \lceil \frac{s}{N} \rceil-1} \eta_k \xi_{k}} + \norm{ \sum_{k=N\cdot \lfloor \frac{i}{N} \rfloor}^i \eta_k \xi_{k}}.
\end{aligned}
\end{equation}
Let $M_{\xi}$ be the uniform bound of $\{\xi_k\}$. Therefore, for any any $\varepsilon > 0$, choosing $\eta_{\varepsilon} = \frac{\varepsilon}{2NM_{\xi}}$ guarantees that 
\begin{equation*}
\limsup_{s\to +\infty} \sup_{s \leq i\leq \Lambda_\eta(\lambda_\eta(s) + T) }  \norm{ \sum_{k = s}^i \eta_k \xi_{k}} \leq 2NM_{\xi}\limsup_{s\to +\infty, ~s \leq i\leq \Lambda_\eta(\lambda_\eta(s) + T) }  \eta_{i} \leq \varepsilon. 
\end{equation*}
This completes the proof.
\end{proof}

\noindent{\textit{Proof of Theorem \ref{The_convergence_Nondiminishing_RR}
.}} For the differential inclusion \eqref{Eq:Breg_DI} and the update scheme \eqref{Eq:Breg_general_iterative}, define $z_k=\nabla\phi(x_k)$ and $z(t)=\nabla\phi(x(t))$. Then, we have 
\begin{equation}
\dot z\in-\cH(\nabla\phi^*(z))\text{ and }z_{k+1}=z_k-\eta_k(d_k+\xi_k). 
\end{equation}
Note that $\cH\circ\nabla\phi^*$ has a closed graph. By assumption, $\phi$ is $\mu$-strongly convex and $L$-smooth, with some $\mu>0$. Set $\tilde\delta_k:=\max\{\delta_k,L\delta_k\}$. By $L$-smoothness of $\phi$, We have 
\[
(\cH\circ\nabla\phi^*)^{\tilde\delta_k}(z_k)=\cH(\nabla\phi^*(z_k+\tilde\delta_k\mathbb{B}^n))+\tilde\delta_k\mathbb{B}^n\supset\cH\left(\nabla\phi^*(z_k)+\frac{\tilde\delta_k}{L}\mathbb{B}^n)\right)+\tilde\delta_k\mathbb{B}^n\supset \cH(\nabla\phi^*(z_k)+\delta_k\mathbb{B}^n)+\delta_k\mathbb{B}^n.
\]
Note that $d_k\in\cH^{\delta_k}(x_k)=\cH(\nabla\phi^*(z_k)+\delta_k\mathbb{B}^n)+\delta_k\mathbb{B}^n$, it follows that $d_k\in(\cH\circ\nabla\phi^*)^{\tilde\delta_k}(z_k)$. Lemma \ref{Le_controlled_noise_RR} and \cite[Theorem 3.5]{xiao2023convergence} illustrate that for any $\tilde\varepsilon > 0$, there exists ${\eta}_{\max}, T > 0$ such that whenever $\lim\sup_{k\geq 0} \eta_k \leq{\eta}_{\max}$ and $\{\xi_k\}$ is $(\tilde\varepsilon, T, \{\eta_k\})$-controlled, then $\{z_k\}$ is uniformly bounded and
\[
\limsup_{k\to +\infty}\;\mathrm{dist}\left( z_k, \{z \in \R^n: 0\in\cH(\nabla\phi^*(z))\}  \right) \leq {\tilde\varepsilon}. 
\]
Because $\phi$ is $\mu$ strongly convex, we have 
\[
\limsup_{k\to +\infty}\;\mathrm{dist}\left( x_k, \{x \in \R^n: 0\in\cH(x)\}  \right) \leq \limsup_{k\to +\infty}\;\frac{1}{\mu}\mathrm{dist}\left( z_k, \{x \in \R^n: 0\in\cH(\nabla\phi^*(z))\}  \right) \leq \frac{\tilde\varepsilon}{\mu}. 
\]
Since $\tilde\varepsilon$ is arbitrary, this completes the proof.
}

\bibliographystyle{spmpsci}
%\bibliography{BG_JOTA}

\end{document}